\numberwithin{equation}{section}
\newtheorem{theorem}{Theorem}[section]
\newtheorem{cor}[theorem]{Corollary}
\newtheorem{lemma}[theorem]{Lemma}
\theoremstyle{definition}
\newtheorem{defn}[theorem]{Definition}
\newtheorem{remark}[theorem]{Remark}
\newtheorem{examples}[theorem]{Examples}
\newtheorem*{assum*}{\assumptionnumber}
\providecommand{\assumptionnumber}{}
\newenvironment{assum}[1]
 {%
  \renewcommand{\assumptionnumber}{Assumption #1}%
  \begin{assum*}%
  \protected@edef\@currentlabel{#1}%
 }
 {%
  \end{assum*}
 }
\renewcommand{\labelenumi}{\alph{enumi}.)}
\begin{document}

\title{Stochastic recursions on directed random graphs}

\author{Nicolas Fraiman \and Tzu-Chi Lin \and Mariana Olvera-Cravioto}

\maketitle

\begin{abstract}
For a vertex-weighted directed graph $G(V_n, E_n;\mathscr{A}_n)$ on the vertices $V_n = \{1,2, \dots, n\}$, we study the distribution of a Markov chain $\{ {\bf R}^{(k)}: k \geq 0\}$ on $\mathbb{R}^n$ such that the $i$th component of ${\bf R}^{(k)}$, denoted $R_i^{(k)}$, corresponds to the value of the process on vertex $i$ at time $k$. We focus on processes $\{ {\bf R}^{(k)}: k \geq 0\}$ where the value of $R_i^{(k+1)}$ depends only on the values $\{ R_j^{(k)}: j \to i\}$ of its inbound neighbors, and possibly on vertex attributes. We then show that, provided $G(V_n, E_n; \mathscr{A}_n)$ converges in the local weak sense to a marked Galton-Watson process, the dynamics of the process for a uniformly chosen vertex in $V_n$ can be coupled, for any fixed $k$, to a process $\{ \mathcal{R}_\emptyset^{(r)}: 0 \leq r \leq k\}$ constructed on the limiting marked Galton-Watson tree. Moreover, we derive sufficient conditions under which $\mathcal{R}^{(k)}_\emptyset$ converges, as $k \to \infty$, to a random variable $\mathcal{R}^*$ that can be  characterized in terms of the attracting endogenous solution to a branching distributional fixed-point equation. Our framework can also be applied to processes $\{ {\bf R}^{(k)}: k \geq 0\}$ whose only source of randomness comes from the realization of the graph $G(V_n, E_n;\mathscr{A}_n)$. 
\vspace{5mm}

\noindent {\em Keywords: } Markov chains, stochastic recursions, interacting particle systems, distributional fixed-point equations, weighted branching processes, directed graphs. 

\noindent {\em 2010 MSC:} Primary: 60G10,  05C80, 60J05. Secondary: 60J85.

\end{abstract}

\section{Introduction}

The main motivation for this work is to provide a mathematical framework that can be used to establish a rigorous connection between the stationary distribution of Markov chains on $\mathbb{R}^n$, whose dynamics are governed by the neighbor-to-neighbor interactions on a directed graph, and the solutions to branching distributional fixed-point equations. To illustrate the type of connection we seek, consider the Markov chain $\{ W_k: k \geq 0\}$ corresponding to the waiting time of the $k$th customer in a single-server queue with i.i.d.~interarrival times and i.i.d.~processing times. It is well-known that, $\{W_k : k \geq 0\}$ satisfies the recursion:
$$W_{k+1} = ( W_k + \chi_k - \tau_{k+1} )^+, \qquad k \geq 0,$$
where $\chi_k$ is the processing time of the $k$th customer and $\tau_k$ is the interarrival time between the $(k-1)$th and $k$th customers. We also know that provided that $E[ \chi_1 - \tau_1] < 0$, there exists a random variable $W_\infty$ such that $W_k \Rightarrow W_\infty$ as $k \to \infty$, where $\Rightarrow$ denotes weak convergence. It follows that $W_\infty$ can be characterized as the unique solution to Lindley's equation:
$$W \stackrel{\mathcal{D}}{=} ( W + \chi - \tau )^+,$$
where $W$ is independent of $(\chi,\tau)$ and $\stackrel{\mathcal{D}}{=}$ denotes equality in distribution.

In general, if $\{ X_k: k \geq 0\}$ is a Feller chain on $\mathbb{R}$ whose transitions are determined via
$$X_{k+1} = \Phi( X_k, \xi_k), \qquad k \geq 0,$$
for some deterministic and continuous map $\Phi$, and $\{ \xi_k: k \geq 0\}$ a sequence of i.i.d.~random variables, then, assuming $X_k \Rightarrow X_\infty$ as $k \to \infty$ for some random variable $X_\infty$, we can expect that $X_\infty$ will be a solution to 
$$X \stackrel{\mathcal{D}}{=} \Phi\left( X, \xi \right),$$
where $X$ is independent of $\xi$. 

Branching distributional equations take the general form
$$X \stackrel{\mathcal{D}}{=} \Psi \left( N, Q , \{ X_i, C_i: 1 \leq i \leq N \} \right),$$
where the $\{ X_i\}$ are i.i.d.~copies of $X$, independent of $(N, Q, C_1, C_2, \dots)$. These recursions often arise  when analyzing structures on trees \cite{Aldo_Band_05}, divide-and-conquer algorithms \cite{Fill_Jan_01, Devroye_01}, queueing networks with synchronization requirements \cite{Kar_Kel_Suh_94,Olv_Rui_20}, and as heuristics for the stationary behavior of random processes on sparse, locally tree-like graphs.

The goal of this paper is to prove that the stationary distributions of certain types of Markov chains defined on large locally tree-like directed graphs approximately satisfy branching distributional fixed-point equations.  The main practical implication of our result lies on the mathematical tractability of distributional fixed-point equations, which can then be used to analyze the properties of the stationary distributions of interest.

The set of techniques developed in this paper rely heavily on the analysis of the PageRank algorithm {\cite{Volk_Litv_10, Jel_Olv_10, Lee_Olv_20, Grav_etal_20, Olvera_20}}, which in the context of this work corresponds to a linear recursion with no randomness other than the one used to generate the graph where it is defined. The PageRank recursion has now been extensively studied, both from the point of view of its convergence to a branching distributional fixed-point equation, and also from the point of view of the tail behavior of its solutions {\cite{Alsm_Mein_10a, Alsm_Mein_10b, Olvera_12}}. Interestingly, much of what is known for PageRank can be extended to include additional random noises that act on either the vertices or the edges of the graph, and the mode of convergence can be extended, for unbounded recursions, to include the convergence of certain moments. When we add the noises, our framework connects to the study of discrete-time interacting particle systems like those studied in \cite{lacker2020local}, specialized to recursions that will lead to a characterization of their stationary distribution in terms of a branching distributional fixed-point equation. Specifically, the type of discrete-time interacting particle systems that our framework covers are defined on directed graphs with, asymptotically, no self-loops.

The remainder of the paper is organized as follows. In Section~\ref{S.Structure} we give a precise description of the type of recursions that we study, including some well-known examples that fit into our framework. In Section~\ref{S.Characterization} we specify the type of directed random graphs for which our results will hold and state our main theorem. In Section~\ref{S.EndogenousSol} we explain how to construct solutions to branching distributional fixed-point equations, and in Section~\ref{S.Proofs} we prove all our theorems. Finally, for completeness, we give in the {appendix} a brief description of two important families of random graph models known to satisfy all of our assumptions.

\subsection{Structure of the recursions} \label{S.Structure}

Consider a vertex-weighted directed graph $ G(V_n, E_n; \mathscr{A}_n)$ on the set of vertices $V_n = \{1, 2, \dots, n\}$, having directed edges on the set $E_n$, and where each vertex $i \in V_n$ has an attribute $\mathbf{a}_i \in \mathcal{S}'$, where $\mathcal{S}'$ is a separable metric space;  define $\mathscr{A}_n = (\mathbf{a}_i)_{i \in V_n}$. We can think of the set of attributes $\mathscr{A}_n$ as latent variables that will allow us to construct the graph $G(V_n, E_n; \mathscr{A}_n)$, e.g. in an inhohomogeneous random graph \cite{Hofstad1} these would include the weights that modulate the degrees, and in a configuration model \cite{bollobas, Hofstad1} these would correspond to the target degree sequence. In some of the applications that we have in mind, the attributes can also include parameters that are needed in the model, e.g., in personalized PageRank we include the personalization probability of each vertex as well as the damping factor. 

We write $j \to i$ to denote that there is a directed edge from vertex $j$ to vertex $i$. We will assume later that $\mathbf{G} := \{ G(V_n, E_n; \mathscr{A}_n): n \geq 1\}$ is a graph sequence converging in the local weak sense to a {marked Galton-Watson tree}, and we will analyze it under the large graph limit $n \to \infty$.  However, the Markov chain we analyze, which we describe in Section~\ref{SS.FixedGraph} below, is defined on a fixed graph.

\subsubsection{Markov chain on a fixed graph} \label{SS.FixedGraph}

Let $G(V,E;\mathscr{A})$ be a fixed vertex-weighted directed graph.  The graph does not need to be simple, nor is it required to be strongly connected. To each vertex $i \in V$ assign its full vertex mark, which includes its vertex attribute as well as its in-degree $d_i^-$ and its out-degree $d_i^+$:
$${\bf x}_i = (d_i^-, d_i^+, {\bf a}_i), \qquad i \in V.$$
Note that the full vertex marks take values on the space $\mathcal{S} := \mathbb{N} \times \mathbb{N} \times \mathcal{S}'$. Throughout the paper we use lower-case letters to refer to deterministic quantities and upper case ones to denote random elements.

To define the Markov chain $\{{\bf R}^{(k)}: k \geq 0\}$, start with an initial vector $\mathbf{R}^{(0)}$ consisting of i.i.d.~components $\{R_i^{(0)}: 1 \leq i \leq n\}$ distributed according to some initial distribution $\mu_0$ on $\mathbb{R}$, and recursively define 
$$R^{(k+1)}_i = \Phi\left( {\bf x}_i, \zeta_i^{(k)}, \left\{g(R_j^{(k)}, {\bf x}_j), \xi^{(k)}_{j,i}: j \to i\right\} \right), \qquad i \in V, \quad k \geq 0,$$
where the sequences $\{ \zeta_i^{(k)}: i \geq 1, \, k \geq 0\}$ and $\{\xi_{i,j}^{(k)}: i,j \geq 1, \, k \geq 0\}$ consist each of i.i.d.~random variables, independent of each other and of any other random variables in the graph and $g$ is a deterministic function. We will refer to the random variables $\boldsymbol{\zeta}^{(k)} = \{ \zeta_i^{(k)}: i \in V \}$ and the vectors $\boldsymbol{\xi}^{(k)} = \{ \xi_{i,j}^{(k)}: i,j \in V\}$ as the ``noise" at time $k$, with $\boldsymbol{\zeta}^{(k)}$ representing noise on the vertices and $\boldsymbol{\xi}^{(k)}$ noise on the edges. For aesthetic reasons we do not capitalize the noise sequences.

The description of $\{{\bf R}^{(k)}: k \geq 0\}$ is convenient from a modeling perspective since it makes explicit the role that the vertex marks $\{ \mathbf{x}_i \}$ play. However, it is equivalent to the two-step representation using the composition map $\Psi$ and the Markov chain $\{ \mathbf{V}^{(k)}: k \geq 0\}$ given by:
\begin{align*}
V_i^{(0)} &= g(R_i^{(0)}, \mathbf{x}_i), \\
V_i^{(k+1)} &= \Psi\left( \mathbf{x}_i,  \zeta_i^{(k)}, \left\{ V_j^{(k)} , \xi^{(k)}_{j,i}: j \to i\right\} \right) \\
&:= g\left( \Phi\left( \mathbf{x}_i, \zeta_i^{(k)}, \left\{ V_j^{(k)} , \xi^{(k)}_{j,i}: j \to i\right\} \right) , \mathbf{x}_i \right)  , \qquad k \geq 0,
\end{align*}
for each vertex $i \in V$. We can then compute $\mathbf{R}^{(k)}$ for $k \geq 1$ via
$$R_i^{(k+1)} = \Phi\left( \mathbf{x}_i, \zeta_i^{(k)}, \left\{ V_j^{(k)} , \xi^{(k)}_{j,i}: j \to i\right\} \right), \qquad i \in V.$$
It is in fact the map $\Psi$ that leads to a distributional fixed-point equation when $g$ is other than $g(r,\mathbf{x}) = r$.

Our framework also includes recursions on graphs that do not have any noises, in which case the iterative process, assuming it converges, results in a solution to the (possibly non-linear) system of equations:
$$r_i = \Phi\left( \mathbf{x}_i, \left\{g(r_j, \mathbf{x}_j): j \to i\right\} \right), \qquad i \in V. $$
In this case, the randomness leading to a distributional fixed-point equation will come only from the randomness in the graph sequence $\boldsymbol{G} = \{  G(V_n, E_n; \mathscr{A}_n) : n \geq 1\}$ that will be introduced later.

\begin{remark}
It is important to point out that the type of recursions we study exclude cases where $R_i^{(k+1)}$ depends on $R_i^{(k)}$, since the random graphs $G(V_n, E_n; \mathscr{A}_n)$  will not in general have self-loops. In other words, the evolution of each vertex is determined only by that of its inbound neighbors, their vertex attributes, its own vertex attribute, and potentially, noises on vertices and/or edges, but not by its own history.  It is this feature that allows the limit as $k \to \infty$ to be fully characterized in terms of a branching distributional fixed-point equation, which is not true for more general recursions.
\end{remark}

\begin{examples} \label{Ex:Models}
The following examples illustrate the type of recursions that our framework is intended to cover. The first two examples include noises at the vertices, while the second two have no noises of either kind and are described in terms of the system of equations they satisfy as $k \to \infty$. 
\begin{enumerate}[leftmargin=5mm] \renewcommand{\labelenumi}{\arabic{enumi}.} 
\item \textit{Generalized DeGroot model:} This model is used in social sciences to study opinion dynamics. Here, $R_i^{(k)}$ denotes the opinion of the $i$th individual in the population at time $k$,
    \begin{align}\label{eq:ExtDeGroot}
        R_i^{(k+1)}  & = c f\left(q_i,\zeta_i^{(k)} \right) + \frac{1-c}{d_i^-} \sum_{j \rightarrow i} R_j^{(k)},
    \end{align}
 where $c \in [0,1]$ is a damping factor and $q_i$ is a vertex parameter. When $c = 0$, equation \eqref{eq:ExtDeGroot} is known as the DeGroot model~\cite{degroot1974reaching}. When $f(q_i, \zeta_i^{(k)}) = q_i$, it is known as the Friedkin-Johnsen model \cite{friedkin1990social, friedkin1999influence, frasca2013gossips, ravazzi2014ergodic}, and $q_i$ represents the internal opinion or stubbornness of agent $i$. Another extension of the model \cite{yang2017innovation} sets $f(q_i, \zeta_i^{(k)}) = \zeta_i^{(k)}$ in order to model miscommunication between the agents. This choice of $f(q_i, \zeta_i^{(k)}) = \zeta_i^{(k)}$ has also been used in  \cite{acemoglu2015networks, damonte2019systemic} to model production networks, in which case $R_i^{(k)}$ represents the logarithm of the output of firm $i$, $\zeta_i^{(k)}$ models a shock to firm $i$, and $c$ is the level of interconnection in the economy. 
\item \textit{Noisy majority voter model:} This is an interacting particle system, with agents on a directed graph, and taking values in the set $\{0,1\}$. The recursion describes the ``vote" of agent $i$ at time $k$, where each agent either adopts the vote indicated by the majority of its inbound neighbors with some probability {$1-\epsilon$}, or chooses the opposite vote with probability {$\epsilon$}. Its dynamics are given by
   \begin{align}\label{Ex: Noisy Voter}
        R_i^{(k+1)} = \left( \zeta_i^{(k)} + 1\left(\sum_{j \rightarrow i} R_j^{(k)} \geq \frac{d_i^{-}}{2}\right)\right) \mod 2,
    \end{align}
where $\zeta^{(k)}_i~\sim\text{Ber}(\epsilon)$. In \cite{Aldo_Band_05, Alsmeyer_2011} the recursion is described for directed 3-regular trees, and its invariant distributions are described in terms of $\epsilon$. This model, defined on the square lattice, has also been studied via Monte Carlo simulation \cite{de1992isotropic}.  Note that removing the noise $\zeta_i^{(k)}$ from  \eqref{Ex: Noisy Voter} is equivalent to modeling the Glauber dynamics at zero temperature \cite{castellano2006zero}. Recursion \eqref{Ex: Noisy Voter} has also been applied to the study of opinion dynamics \cite{vilela2017majority} and trader dynamics on financial networks \cite{vilela2019majority}.
\item \textit{Google's PageRank algorithm:} This system of equations computes the rank of vertices on a directed graph of size $n$ by assigning to each vertex $i$ a rank $ p_i = r_i/n$, where  
    \begin{align*} 
         r_i  = (1-c)nq_i + \sum_{j \rightarrow i}\frac{c}{d_j^{+}} r_j,
    \end{align*}
where $r_i$ is the scale-free rank of vertex $i$, $q_i$ represents a personalization value, with ${\bf q} = (q_1, \dots, q_n)$ a probability vector, and $c \in (0,1)$ is a damping factor. The PageRank algorithm is one of the most popular centrality measures on networks, and is used in many different areas of science and engineering, including  word sense disambiguation \cite{agirre2014random}, spam detection \cite{gyongyi2004combating}, citation ranking \cite{chen2007finding}, visual search \cite{jing2008visualrank} and many others \cite{gleich2015pagerank}. It was originally created by Brin and Page \cite{Brin_Page_98} to rank the webpages in the WWW. {The distribution of the PageRank of a typical vertex on various random graphs has been shown to converge} to the attracting endogenous solution of a smoothing transform {in \cite{ Lee_Olv_20, Olvera_20}}, and this characterization has been used to prove the so-called ``power-law hypothesis" \cite{Volk_Litv_10, Jel_Olv_10,Olvera_12,Grav_etal_20}. 
\item \textit{A model for financial cascades:} This {system of equations} was proposed in \cite{deo2019limiting} for studying banking networks
    \begin{align}\label{Ex: Finance}
        r_i  = q_i + \sum_{j \rightarrow i}\frac{(r_j-v_j)^{+}\wedge b_j}{d_j^{+}},
    \end{align}
  where $r_i$ is the total wealth held by bank $i$, and $(q_i, b_i, v_i)$ represent its total external assets, its inter-bank loans, and its external liability, respectively. The work in \cite{deo2019limiting} studies only an Erd\H os-R\'enyi graph, but can also fit our more general framework. 
\end{enumerate}
\end{examples}

 The main assumption that we will make on the map $\Phi$ throughout the paper, is that it be Lipschitz continuous in its recursive arguments, i.e., the $\{R_j^{(k)}\}$, and continuous in its vertex attributes, since the graphs themselves will be changing when we take the large graph limit. Intuitively, this assumption ensures that we can control consecutive iterations of the recursion through a linear map on $\mathbb{R}^n$. Since the vertex attributes $\{ {\bf a}_i\}$ are assumed to take values on a separable metric space $\mathcal{S}'$, we can choose a convenient metric $\rho'$ for our continuity assumptions below. Throughout the rest of the paper, for ${\bf x}, \mathbf{\tilde x} \in  \mathcal{S} = \mathbb{N} \times \mathbb{N} \times \mathcal{S}'$, where ${\bf x} = (d^-, d^+, {\bf a})$ and $\mathbf{\tilde x} = (\tilde d^-, \tilde d^+, {\bf \tilde a})$, we define the metric:
\begin{equation} \label{eq:Metric}
\rho( {\bf x}, \mathbf{\tilde x}) = |d^- - \tilde d^-| + |d^+ - \tilde d^+| + \rho'({\bf a}, \mathbf{\tilde a}).
\end{equation}
It follows that $\mathcal{S}$ is also a separable metric space equipped with the metric $\rho$.

The main assumptions on the map $\Phi$ are given below. The norm $\| \cdot \|_p$ applied to a vector denotes the $\ell_p$-norm, while applied to a matrix it denotes its induced operator norm.

\begin{assum}{[R]} \label{A.PhiMap}
There exist continuous functions $\sigma_-, \sigma_+, \beta: \mathcal{S} \to \mathbb{R}^+$ and $p \in [1, \infty)$ such that:
\begin{enumerate} \renewcommand{\labelenumi}{\arabic{enumi})}
\item For any ${\bf v}, \mathbf{\tilde v} \in \mathbb{R}^n$, 
\begin{align*}
&\left( E\left[  \left| \Phi\left( {\bf x}_i, \zeta^{(0)}_i, \{ v_j, \xi_{j,i}^{(0)}: j \to i\} \right)  -\Phi\left( {\bf x}_i, \zeta^{(0)}_i, \{ \tilde v_j , \xi_{j,i}^{(0)}: j \to i\} \right)   \right|^p \right] \right)^{1/p} \\
&\leq \sigma_-({\bf x}_i ) \sum_{j \to i}  \left| v_j - \tilde v_j \right|{.}
\end{align*}
\item For any ${\bf x} \in \mathcal{S}$ and $r, \tilde r \in \mathbb{R}$,
$$|g(r, {\bf x}) - g(\tilde r, {\bf x}) | \leq \sigma_+({\bf x}) |r - \tilde r|{.}$$
\item For any ${\bf v} \in \mathbb{R}^n$, 
$$\left( E \left[  \left| \Phi\left( {\bf x}_i, \zeta^{(0)}_i, \{v_j, \xi_{j,i}^{(0)}: j \to i\} \right)    \right|^p \right] \right)^{1/p} \leq \sum_{j \to i} \sigma_-({\bf x}_i ) |v_j| + \beta( {\bf x}_i ){.}$$
\item One of the following holds: 
\begin{itemize}
\item[i.] The matrices $C$ and $C^{(0)}$ whose $(i,j)$th components are $C_{i,j} = \sigma_-({\bf x}_i) \sigma_+({\bf x}_j) 1( j \to i)$ and $C_{i,j}^{(0)} = \sigma_-({\bf x}_i) |g(0,{\bf x}_j )| 1(j \to i)$, respectively, satisfy $\| C \|_p \leq K < \infty$ and $\|C^{(0)} \|_p \leq K_0 < \infty$ for any {vertex-weighted} directed graph $G(V, E; \mathscr{A})$.
\item[ii.] There exists $K < \infty$ such that if $\text{supp}(\mu_0) \subseteq [-K, K]$, then $\| {\bf R}^{(1)} \|_\infty \leq K$ for any {vertex-weighted} directed graph $G(V, E; \mathscr{A})$. 
\end{itemize}
\item  There exist finite constants $H, Q, \alpha, \gamma > 0$ such that for $\mathbf{x} = (d^-, d^+, \mathbf{a})  \in \mathcal{S}$,  $\{ v_j : 1 \leq j \leq d^- \} \subseteq \mathbb{R}$, $r \in \mathbb{R}$, and any $\mathbf{\tilde x} \in \mathcal{S}$ such that $\rho(\mathbf{x}, \mathbf{\tilde x}) \leq \epsilon \in (0,1)$, 
\begin{align*}
& \left( E\left[ \left| \Phi\left( \mathbf{x}, \zeta, \left\{ v_j, \xi_j: 1 \leq j \leq d^- \right\} \right) - \Phi\left( \mathbf{\tilde x}, \zeta, \left\{ v_j, \xi_j: 1 \leq j \leq d^- \right\} \right) \right|^p \right] \right)^{1/p} \\
&\leq H \left( 1 \vee \left( \sum_{j \to i} \sigma_-(\mathbf{x})|v_j| + \beta(\mathbf{x}) \right) \right) \rho(\mathbf{x}, \mathbf{\tilde x})^\alpha,
\end{align*}
and 
$$| g(r, \mathbf{x}) - g(r, \mathbf{\tilde x})| \leq Q \left( 1 \vee \sigma_+(\mathbf{x}) |r| \right) \rho(\mathbf{x}, \mathbf{\tilde x})^\gamma.$$
\end{enumerate}
\end{assum}

\begin{examples}
All the models in Example~\ref{Ex:Models} satisfy Assumption~\ref{A.PhiMap} for different choices of $p, g, \sigma_-, \sigma_+$ and $\beta$, as described below:
\begin{enumerate}[leftmargin=5mm] \renewcommand{\labelenumi}{\arabic{enumi}.} 
\item  \textit{Generalized DeGroot model:} We can take any $p \geq 1$ and $g(r,{\bf x}) = r$, $\sigma_{-}({\bf x}_i) = (1-c)/d_i^{-}$, $\sigma_{+}({\bf x}_j) = 1$, and $\beta({\bf x}_i) = c \left( E[|f(q_i,\zeta_i)|^p] \right)^{1/p}$. In addition, if $|f(q,\zeta)| \leq K$ for some $K < \infty$, then, $\| {\bf R}^{(1)} \|_\infty \leq K$, and therefore, Assumption~\ref{A.PhiMap}(4)(ii) is satisfied. 
\item \textit{Noisy majority voter model:} We can take any $p \geq 1$ and $g(r, {\bf x}) = r$. To identify $\sigma_-, \sigma_+$ and $\beta$, note that for any vector ${\bf r}, {\bf \tilde r} \in \mathbb{R}^n$,
    \begin{align*}
        &\left( E\left[  \left| \Phi\left( {\bf x}_i, \zeta^{(0)}_i, \{r_j, \xi_{j,i}^{(0)}: j \to i\} \right)  -\Phi\left( {\bf x}_i, \zeta^{(0)}_i, \{\tilde r_j, \xi_{j,i}^{(0)}: j \to i\} \right)   \right|^p \right] \right)^{1/p} \\
        & = \left| 1\left(\frac{1}{d_i^-}\sum_{j \rightarrow i}r_j\geq \frac{1}{2}\right) - 1\left(\frac{1}{d_i^-}\sum_{j \rightarrow i}\tilde r_j\geq \frac{1}{2}\right) \right| \leq \frac{2}{d_i^-} \sum_{j \rightarrow i} |r_j - \tilde r_j |, 
    \end{align*}
so we can take $\sigma_-({\bf x}_i) = 2/d_i^{-}$ and $\sigma_{+}({\bf x}_j) = 1$. In addition, by choosing $p = 1$ and ignoring the $\text{mod } 2$ in the recursion we obtain $\beta({\bf x}_i) = P( \zeta = 1)^{1/p}$.  Since ${\bf R}_i^{(k)} \in \{0, 1\}$ for all $i \in V$, then Assumption~\ref{A.PhiMap}(4)(ii) is satisfied. 
\item \textit{Google's PageRank algorithm:} We can take $p = 1$, $g(r, {\bf x}_j) = c r/d_j^+$, $\sigma_{-}({\bf x}_i) = 1$, $\sigma_{+}( {\bf x}_j) =c/d_j^{+}$, and $\beta( {\bf x}_i) = (1-c)nq_i$.  The corresponding matrix $C$ satisfies  $\| C \|_1 \leq c < 1$. 
\item \textit{A model for financial cascades:} We can take $p = 1$ and $g(r,  {\bf x}_j) = ((r-v_j)^+ \wedge b_j)/d_j^+$.   Using the inequality $|(x\vee a)\wedge b-(y\vee a)\wedge b| \leq |x-y|$, gives that we can set $\sigma_{-}({\bf x}_i) = 1, \sigma_{+}( {\bf x}_j) = 1/ d_j^{+}$, and $\beta({\bf x}_i) = |q_i|$. The corresponding matrix $C$ satisfies $\|C \|_1 = 1$. 
\end{enumerate}
\end{examples}

\subsubsection{Analysis on a random graph} \label{SS.BothRandom}

 We now turn our attention to the analysis of the Markov chain $\{ \mathbf{R}^{(k)}: k \geq 0\}$ on a random graph $G(V_n, E_n; \mathscr{A}_n)$ constructed using the vertex attributes or latent variables $\mathscr{A}_n$. Formally, our model is defined on a probability space large enough to construct the sequence of latent variables $\{ \mathscr{A}_n: n \geq 1 \}$, random numbers to generate the random graph sequence $\mathbf{G} = \{ G(V_n, E_n; \mathscr{A}_n): n \geq 1\}$, the noises $\{\boldsymbol{\zeta}^{(k)}, \boldsymbol{\xi}^{(k)}: k \geq 0 \}$ (the same noises can be used for all graphs in $\mathbf{G}$), and additional independent random numbers for constructing couplings with the local weak limit of $\mathbf{G}$. 

We will now separate the different levels of randomness involved in the construction of $G(V_n, E_n; \mathscr{A}_n)$ and the corresponding Markov chain $\{ \mathbf{R}^{(k,n)}: k \geq 0\}$ for each $n \geq 1$; in the sequel we omit the dependence on $n$ and simply write $\mathbf{R}^{(k)} = \mathbf{R}^{(k,n)}$. The first level of randomness corresponds to the latent variables, or vertex attributes, $\{\mathscr{A}_n: n \geq 1\}$, which we identify through the sigma algebra: 
$$\mathscr{F}_n = \sigma( \mathscr{A}_n), \qquad n \geq 1,$$
which we use to define the conditional probability $\mathbb{P}_n(\,\cdot\,) = E[1(\cdot) | \mathscr{F}_n]$ and its corresponding conditional expectation $\mathbb{E}_n[\,\cdot\,] = E[ \,\cdot\, | \mathscr{F}_n]$.

Once the random graph $G(V_n, E_n; \mathscr{A}_n)$ is realized we construct the Markov chain $\{ \mathbf{R}^{(k)}: k \geq 0\}$ as described in Section~\ref{SS.FixedGraph}. The noises used to define the Markov chain are assumed to be independent of the latent variables $\mathscr{A}_n$ and of the realized graph $G(V_n, E_n; \mathscr{A}_n)$. When conditioning on the graph we will use the sigma algebra:
$$\mathscr{G}_n = \sigma\left( G(V_n, E_n; \mathscr{A}_n) \right),$$
which we use to define the conditional probability $\mathbf{P}_n(\,\cdot\,) = E[1(\cdot) | \mathscr{G}_n]$ and its corresponding conditional expectation $\mathbf{E}_n[\,\cdot\,] = E[\,\cdot\, | \mathscr{G}_n]$. Note that conditionally on $\mathscr{G}_n$ the only remaining source of randomness in $\{\mathbf{R}^{(k)}: k \geq 0\}$ is the initial state $\mathbf{R}^{(0)}$ and noise sequences $\{\boldsymbol{\zeta}^{(k)}, \boldsymbol{\xi}^{(k)}: k \geq 0\}$. To emphasize the random nature of the graph we now use upper case letters for denoting the vertex attributes and their full marks, i.e., $\mathbf{A}_i$ denotes the attribute or latent variable of vertex $i \in V_n$, while $\mathbf{X}_i = (D_i^-, D_i^+, \mathbf{A}_i)$ denotes its full mark. Note that the $\{{\bf X}_i: i \in V_n\}$ are measurable with respect to $\mathscr{G}_n$, but not necessarily $\mathscr{F}_n$. 

Our main goal in this paper is to show that the marginals of the stationary distribution of the Markov chain $\{ \mathbf{R}^{(k)}: k \geq 0\}$ on a {vertex-weighted} directed graph $G(V_n, E_n; \mathscr{A}_n)$ can be characterized via a branching distributional fixed-point equation whenever the graph sequence $\mathbf{G}$ converges in the local weak sense to a marked Galton-Watson process. However, the existence of a stationary distribution for $\{ \mathbf{R}^{(k)}: k \geq 0\}$ for a fixed graph is unrelated to the random graph model used to construct the graph.  In most of the examples that we have in mind, the existence of a stationary distribution for $\{ {\bf R}^{(k)}: k \geq 0\}$ taking values in $\mathbb{R}^n$ can be established by showing that the map $\Phi$ defines a contraction under a suitable Wasserstein metric. Since it is possible to establish the existence of a stationary distribution in other ways, our main theorem (Theorem~\ref{T.Main}) will focus on the analysis of finitely many iterations, i.e., $\{ {\bf R}^{(r)}: 0 \leq r \leq k\}$ for any fixed $k$, without specifically imposing conditions to ensure its convergence as $k \to \infty$. The result for the graph when $\Phi$ defines a contraction on $\mathbb{R}^n$ is given as a corollary to the main theorem.

\section{Characterizing the typical behavior} \label{S.Characterization}

We focus now on characterizing the distribution of a uniformly chosen component of the vector ${\bf R}^{(k)}$, since it represents the typical behavior of a vertex in the graph $G(V_n, E_n; \mathscr{A}_n)$ under the iterations of the map $\Phi$. For graphs with exchangeable vertices, this also corresponds to the common marginal distribution of its vertices. Throughout the paper we denote this random variable by $R^{(k)}_I$, where $I$ is a uniformly chosen index in $V_n$. The main idea behind our characterization is that, provided the graph $G(V_n, E_n; \mathscr{A}_n)$ converges in the local weak sense to a marked Galton-Watson process, $R^{(k)}_I$ will satisfy a recursion on a tree which will converge, as $n, k \to \infty$, to a random variable that can be written in terms of a solution to a branching distributional fixed-point equation. 

We start by defining a delayed marked Galton-Watson process. Nodes in a tree will have labels of the form $\mathbf{i} = (i_1, \dots, i_k) \in \mathcal{U}$, where $\mathcal{U} = \bigcup_{k=0}^\infty \mathbb{N}_+^k$ with the convention that $\mathbb{N}_+^0 := \{ \emptyset\}$ denotes the root. To simplify the notation, we also write $\mathbf{i} = i$ instead of $\mathbf{i} = (i)$ for labels of length one. The index concatenation operation is denoted by $(\mathbf{i},j) =(i_1, \dots, i_k, j)$. 

\begin{defn} \label{D.markGW}
A {{\em delayed marked Galton-Watson tree}} with marks on {some} separable metric space $M$, is a tree constructed using a {family} of independent random vectors ${\boldsymbol{\mathcal{X}}=} \{ (\mathcal{N}_\mathbf{i}, {\boldsymbol{\mathcal{Y}}_\mathbf{i}}): \mathbf{i} \in \mathcal{U} \}$, such that $\mathcal{N}_\mathbf{i} \in \mathbb{N}$, ${\boldsymbol{\mathcal{Y}}_\mathbf{i}} \in M$, and the vectors $\{ (\mathcal{N}_\mathbf{i}, {\boldsymbol{\mathcal{Y}}_\mathbf{i}}): \mathbf{i} \in \mathcal{U}, \mathbf{i} \neq \emptyset \}$ are i.i.d. The {\em delay} refers to the possibility of $(\mathcal{N}_\emptyset, {\boldsymbol{\mathcal{Y}}_\emptyset})$ having a different distribution. The root of the tree, labeled $\emptyset$, constitutes generation zero, $\mathcal{A}_0 = \{ \emptyset\}$. Generation $k$, for $k \geq 1$, is determined recursively via
$$\mathcal{A}_k = \{ (\mathbf{i}, j): \mathbf{i} \in \mathcal{A}_{k-1}, \, 1 \leq j \leq \mathcal{N}_\mathbf{i}\}.$$
We denote the unmarked tree $\mathcal{T} = \bigcup_{k=0}^\infty \mathcal{A}_k$, and the marked one $\mathcal{T}({\boldsymbol{\mathcal{X}}}) = \{ {(\mathcal{N}_\mathbf{i}, \boldsymbol{\mathcal{Y}}_\mathbf{i})}: \mathbf{i} \in \mathcal{T} \}$; their restrictions to the first $k$ generations are denoted $\mathcal{T}^{(k)} = \bigcup_{r=0}^k \mathcal{A}_r$ and $\mathcal{T}^{(k)}({\boldsymbol{\mathcal{X}}}) = \{ {(\mathcal{N}_\mathbf{i}, \boldsymbol{\mathcal{Y}}_\mathbf{i})}: \mathbf{i} \in \mathcal{T}^{(k)} \}$, respectively.
\end{defn}

For a directed graph $G(V_n, E_n; \mathscr{A}_n)$, let $I \in V_n$ be a vertex uniformly chosen at random, and let $A_k$ denote the set of vertices $j \in V_n$ having a directed path of length $k$ connecting them to vertex $I$, with $A_0 = \{ I\}$. Define $\mathcal{G}_I^{(k)} =\bigcup_{r=0}^k A_r$ to be the subgraph of $G(V_n, E_n; \mathscr{A}_n)$ whose vertex set is $\bigcup_{r=0}^k A_r$. If the vertices in the graph $G(V_n, E_n; \mathscr{A}_n)$ have marks $\{ \mathbf{X}_i: i \in V_n\}$, then we use $\mathcal{G}_I^{(k)}(\mathbf{X})$ to denote the graph $\mathcal{G}_I^{(k)}$ including the marks. 

Our main result is based on a coupling of the exploration of the inbound component of a uniformly chosen vertex $I$ in a {vertex-weighted directed} graph $G(V_n, E_n; \mathscr{A}_n)$ and a delayed marked Galton-Watson process. The exact description of this coupling is given in the following definitions.

\begin{defn}
We say that two simple {directed graphs} $G(V,E)$ and $G'(V', E')$ are {\em isomorphic} if there exists a bijection $\theta: V \to V'$ such that edge $(i,j) \in E$ if and only if edge $(\theta(i), \theta(j)) \in E'$.
We say that two directed multigraphs $G(V,E)$ and $G'(V',E')$ are {\em isomorpic} if there exists a bijection $\theta: V \to V'$ such that $l(i) = l(\theta(i))$ and $e(i,j) = e(\theta(i), \theta(j))$ for all $i \in V$ and all $(i,j) \in E$, where $l(i)$ is the number of self-loops of vertex $i$ and $e(i,j)$ is the number of edges from vertex $i$ to vertex $j$. In both cases,  we write $G \simeq G'$. 
\end{defn}

\begin{defn}
A graph $G(V, E; \mathscr{A})$ is called a {\em vertex-weighted} {directed} graph if each of its vertices $i \in V$ has an attribute (weight) $\mathbf{a}_i \in \mathcal{S}$ assigned to it, where $\mathcal{S}$ is a separable metric space, and  $\mathscr{A} = (\mathbf{a}_i)_{i \in V}$ are the vertex attributes.
\end{defn}

Our main result is stated for {directed} graphs for which a strong coupling of its local neighborhoods, as defined below, exists. The work in \cite{Olvera_21} shows that such a coupling exists for any random graph generated via either a directed configuration model or any of the graphs in the family of inhomogeneous random digraphs with rank-1 kernels, which includes the directed versions of well-known models such as the Erd\H os-R\'enyi graph, the Chung-Lu model, the Norros-Reittu model, or the generalized random graph. For completeness, we include in the {appendix} a brief description of these two families of random graphs. With the exception of the Erd\H os-R\'enyi graph, all these models are build using latent variables.

\begin{defn} \label{D.StrongCoupling}
Let $\boldsymbol{G}=\{ G(V_n, E_n; \mathscr{A}_n): n \geq 1\}$ be a sequence of vertex-weighted directed random graphs where each $G(V_n, E_n; \mathscr{A}_n)$ is generated using the latent variables in $\mathscr{A}_n$; let $\mathscr{F}_n = \sigma(\mathscr{A}_n)$ and define $\mathbb{P}_n(\,\cdot\,) = P(\,\cdot\,| \mathscr{F}_n)$ and $\mathbb{E}_n[\,\cdot\,] = E[\,\cdot\, | \mathscr{F}_n]$. We say that $\mathbf{G}$ admits a {\em strong coupling} with a marked Galton-Watson tree $\mathcal{T}(\boldsymbol{\mathcal{X}})$ if for each $i \in V_n$ there exists a marked Galton-Watson tree $\mathcal{T}_{\emptyset(i)}(\boldsymbol{\mathcal{X}})$ such that if $I$ is uniformly chosen in $V_n$, independent of $\mathscr{G}_n$,  then $\mathcal{T}_{\emptyset(I)}(\boldsymbol{\mathcal{X})} \stackrel{\mathcal{D}}{=} \mathcal{T}(\boldsymbol{\mathcal{X}})$, and such that for any $\epsilon \in (0,1)$ and any $k \geq 1$ we have that the event 
$$C_i^{(k,\epsilon)} = \left\{ \bigcap_{\mathbf{j} \in \mathcal{T}^{(k)}_{\emptyset(i)} } \{ \rho( \mathbf{X}_{\theta_i(\mathbf{j})}, \boldsymbol{\mathcal{X}}_{\mathbf{j}} ) \leq \epsilon \}, \, \mathcal{G}_i^{(k)} \simeq \mathcal{T}^{(k)}_{\emptyset(i)}  \right\},$$
where $\theta_i$ is the bijection defining $\mathcal{G}_i^{(k)} \simeq \mathcal{T}^{(k)}_{\emptyset(i)}$, satisfies
\begin{align*}
\mathbb{E}_n\left[ \rho(\mathbf{X}_{I}, \boldsymbol{\mathcal{X}}_{\emptyset(I)}) \right] &= \frac{1}{|V_n|} \sum_{i\in V_n} \mathbb{E}_n\left[ \rho(\mathbf{X}_i, \boldsymbol{\mathcal{X}}_{\emptyset(i)})\right] \xrightarrow{P} 0, \qquad n\to\infty \\
\intertext{and}
\mathbb{P}_n\left( C_{I}^{(k,\epsilon)}\right) &= \frac{1}{|V_n|} \sum_{i\in V_n} \mathbb{P}_n\left( C_i^{(k,\epsilon)} \right) \xrightarrow{P} 1, \qquad n\to\infty
\end{align*}

Furthermore, for any fixed $m \geq 1$ and $\{I_j: 1 \leq j \leq m\}$ i.i.d.~random variables uniformly chosen in $V_n$, independent of $\mathscr{G}_n$, there exist i.i.d. Galton-Watson trees denoted $\left\{ \mathcal{T}_{\emptyset(I_j)}(\boldsymbol{\mathcal{X}}): 1 \leq j \leq m\right\}$ distributed as $\mathcal{T}(\boldsymbol{\mathcal{X}})$, whose roots correspond to the vertices $\{I_j: 1 \leq j \leq m\}$ in $G(V_n, E_n;\mathscr{A}_n)$, such that

$$\sum_{j=1}^m \mathbb{E}_n\left[ \rho(\mathbf{X}_{I_j}, \boldsymbol{\mathcal{X}}_{\emptyset(I_j)}) \right] \xrightarrow{P} 0 \qquad \text{and} \qquad \mathbb{P}_n\left( \bigcap_{j=1}^m C_{I_j}^{(k,\epsilon)} \right) \xrightarrow{P} 1, \qquad n \to \infty.$$
\end{defn}

Let $M$ be a Polish space and let  $\eta$ be its corresponding distance. Fix $p \in [1, \infty)$ and let $\mathcal{P}_p(M)$ be the set of distributions on $M$ with finite $p$th moment. We use $d_p$ to denote the Wasserstein metric of order $p$ given by
$$d_p(\mu, \nu) = \inf\left\{ \left( E\left[ \eta(X,Y)^p \right] \right)^{1/p}: \text{law}(X) = \mu, \, \text{law}(Y) = \nu \right\},$$
where the infimum is taken over all couplings of the random variables/vectors $X$ and $Y$. When $M=\mathbb{R}$ and $\eta(x,y) = |x-y|$, the $d_p$ metric admits the explicit representation:
$$d_p(\mu, \nu)^p = \int_0^1 | F^{-1}(u) - G^{-1}(u) |^p du,$$
for $F(x) = \mu((-\infty, x])$, $G(x) = \nu((-\infty, x])$, and $f^{-1}(u) = \inf \{ x \in \mathbb{R}: f(x) \geq u \}$ the generalized inverse of function $f$. 
Throughout the paper the metric $d_p$ will vary depending on the underlying space. When working on $\mathbb{R}$ we will use the explicit representation stated above, when working on $\mathcal{S}$ we use $\eta = \rho$, and when working on $\mathbb{R}^n$ we take $\eta$ to be the corresponding $\ell_p$ metric. The underlying space will become clear from the context.

The main results in this paper will require that we use random measures, sometimes measurable with respect to $\mathscr{F}_n$ and sometimes with respect to $\mathscr{G}_n$. In all cases, the limiting measure is non-random, and the convergence occurs in probability, i.e.,
$$d_p(\nu_n, \nu) \xrightarrow{P} 0, \qquad n \to \infty.$$ 
Note that if $\nu_n$ is $\mathscr{F}_n$-measurable ($\mathscr{G}_n$-measurable), then so is $d_p(\nu_n,\nu)$. This follows from the measurability of the of optimal coupling (see Corollary 5.22 in \cite{villani2008optimal}).

\begin{remark}
Note that Definition~\ref{D.StrongCoupling} implies that $\mathbf{X}_I \xrightarrow{P} \boldsymbol{\mathcal{X}}_\emptyset$ as $n \to \infty$, which in turn implies that if $\upsilon_n^*(\cdot) = \mathbb{P}_n\left(\mathbf{X}_I \in \cdot \right)$ and $\upsilon^*(\cdot) = P\left( \boldsymbol{\mathcal{X}}_\emptyset \in \cdot \right)$, then $d_1(\upsilon_n^*, \upsilon^*) \xrightarrow{P} 0$ as $n \to \infty$. On the other hand, if $\mathbf{X}_\xi$ denotes the mark of the inbound neighbor of vertex $I$ coupled to node 1 on the limiting tree, we have $\mathbf{X}_\xi \xrightarrow{P} \boldsymbol{\mathcal{X}}_1$ as $n \to \infty$; however, if $\upsilon_n(\cdot) = \mathbb{P}_n\left(\mathbf{X}_\xi \in \cdot \right)$ and $\upsilon(\cdot) = P\left( \boldsymbol{\mathcal{X}}_1 \in \cdot \right)$, we may not have that $d_1(\upsilon_n, \upsilon) \xrightarrow{P} 0$ as $n \to \infty$.  The weaker mode of convergence for $\mathbf{X}_\xi$, is due to the size-bias introduced during the exploration, which in some cases could lead to $E[\rho({\boldsymbol{\mathcal{X}}_1}, \mathbf{x}_0)] = \infty$ for any fixed $\mathbf{x}_0 \in \mathcal{S}$. 
\end{remark}

\begin{assum}{[G]} \label{A.RGmodels}
The graph {sequence $\mathbf{G} = \{G(V_n, E_n; \mathscr{A}_n): n \geq 1\}$} admits a strong coupling with a (delayed) marked Galton-Watson process, and for the same $p \in [1, \infty)$ from Assumption~\ref{A.PhiMap}, the following hold:
\begin{itemize}
\item The expectations 
\begin{align*}
&E\left[ (\mathcal{N}_\emptyset \sigma_-({\boldsymbol{\mathcal{X}}_\emptyset}))^p + \beta({\boldsymbol{\mathcal{X}}_\emptyset})^p\right] \qquad \text{and} \\
&E\left[ (\mathcal{N}_1\sigma_-({\boldsymbol{\mathcal{X}}_1}) \sigma_+({\boldsymbol{\mathcal{X}}_1}))^p+ (\sigma_+({\boldsymbol{\mathcal{X}}_1}) \beta({\boldsymbol{\mathcal{X}}_1}))^p + \sigma_+({\boldsymbol{\mathcal{X}}_1})^p +   |g(0, {\boldsymbol{\mathcal{X}}_1})|^p \right]
\end{align*}
are finite. 
\item For $\mathbf{x} = (d^-, d^+, \mathbf{a}) \in \mathcal{S}$ and $f_*(\mathbf{x}) = \sigma_-(\mathbf{x}) d^-$, the following limits hold as $n \to \infty$:
$$\frac{1}{|V_n|} \sum_{i\in V_n} f_*(\mathbf{X}_i)^p \xrightarrow{P} E\left[ f_*(\boldsymbol{\mathcal{X}}_\emptyset)^p \right], \qquad \frac{1}{|V_n|} \sum_{i\in V_n} \beta(\mathbf{X}_i)^p \xrightarrow{P} E\left[ \beta(\boldsymbol{\mathcal{X}}_\emptyset)^p\right].$$
\end{itemize}
\end{assum}

We are now ready to state our main theorem, whichgives the existence of a coupling between the trajectory of the process $\{ R_I^{(r)}: 0 \leq r \leq k\}$ along a uniformly chosen vertex $I \in V_n$, and the trajectory of the root node in an equivalent process constructed on the limiting tree. In the statement of the theorem, the random vector $(\zeta, \{\xi_j: j \geq 1\})$ denotes a generic noise vector, and is assumed to be independent of ${\boldsymbol{\mathcal{X}}_\emptyset}$ and ${\boldsymbol{\mathcal{X}}_1}$.

\begin{theorem} \label{T.Main}
Suppose the map $\Phi$ satisfies Assumption~\ref{A.PhiMap}, the directed graph sequence $\boldsymbol{G} = \{G(V_n,E_n; \mathscr{A}_n): n \geq 1\}$ satisfies Assumption~\ref{A.RGmodels}. Let $\mu_0(\cdot) = P(R_1^{(0)} \in \cdot)$ satisfy $E\left[ |R^{(0)}_1|^p \right] < \infty$ if Assumption~\ref{A.PhiMap}(4)(i) holds, or $\text{supp}(\mu_0) \subseteq [-K, K]$ if Assumption~\ref{A.PhiMap}(4)(ii) holds. 
\begin{enumerate}[label=\Alph*), leftmargin=*]
\item Let $\mu_{k,n}(\cdot) = \mathbb{P}_n\big( R_I^{(k)} \in \cdot \big)$, where $I$ is uniformly distributed in $\{1, 2, \dots, n\}$, independent of $\mathscr{G}_n$. 
Then, for any fixed $k \geq 0$ there exists a sequence of random variables $\{ \mathcal{R}_\emptyset^{(0)}, \mathcal{R}_\emptyset^{(1)}, \dots, \mathcal{R}_\emptyset^{(k)} \}$ constructed on the coupled marked Galton-Watson tree $\mathcal{T}_{\emptyset(I)}^{(k)}(\boldsymbol{\mathcal{X}})$ from Assumption~\ref{A.RGmodels}, on the same probability space as $\{ R_I^{(0)}, R_I^{(1)}, \dots, R_I^{(k)} \}$, such that
$$\max_{0 \leq r \leq k} \mathbb{E}_n\left[ \left| R_I^{(r)} - \mathcal{R}_\emptyset^{(r)} \right|^p \right] \xrightarrow{P} 0, \qquad n \to \infty.$$
In particular, {if $\nu_k(\cdot) = P\left( \mathcal{R}_\emptyset^{(k)} \in \cdot \right)$, then, for any fixed $k \geq 0$,
$$d_p(\mu_{k,n}, \nu_k) \xrightarrow{P} 0, \qquad n \to \infty.$$}
\item Furthermore, for any fixed $m,k \geq 1$, $\{I_j: 1 \leq j \leq m\}$ i.i.d.~uniformly chosen in $V_n$,  independent of $\mathscr{G}_n$, and for any set of bounded and continuous functions on $\mathbb{R}^{k+1}$, $\{ f_j: 1 \leq j \leq m\}$, we have
$$E\left[ \prod_{j=1}^m f_j(R_{I_j}^{(0)}, \dots, R_{I_j}^{(k)}) \right] \to \prod_{j = 1}^m E[f_j(\mathcal{R}_\emptyset^{(0)}, \dots, \mathcal{R}_\emptyset^{(k)})], \qquad n \to \infty.$$

\item Moreover, if $E[ (\mathcal{N}_1 \sigma_-({\boldsymbol{\mathcal{X}}_1})\sigma_+({\boldsymbol{\mathcal{X}}_1}))^p] =: c^p < 1$, then there exists a probability measure $\nu$ such that 
$$d_p(\nu_k, \nu) \to 0 \quad \text{a.s.}, \qquad k \to \infty,$$
where $\nu$ is the probability measure of a random variable $\mathcal{R}^*$ that satisfies:
\begin{align*}
\mathcal{R}^* &= \Phi\left( {\boldsymbol{\mathcal{X}}_\emptyset}, \zeta, \{ \mathcal{V}_j, \xi_j: 1 \leq j \leq \mathcal{N}_\emptyset \} \right), 
\end{align*}
with the $\{\mathcal{V}_j\}$ i.i.d.~copies of $\mathcal{V}$, independent of $({\boldsymbol{\mathcal{X}}_\emptyset}, \zeta, \{ \xi_j: j \geq 1\})$, {$\boldsymbol{\mathcal{X}}_\emptyset$ independent of $(\zeta, \{ \xi_j: j \geq 1\})$, and} $\mathcal{V}$ the attracting endogenous solution to the distributional fixed-point equation:
\begin{align*}
\mathcal{V} \stackrel{\mathcal{D}}{=} \Psi\left( {\boldsymbol{\mathcal{X}}_1}, \zeta, \{ \mathcal{V}_j, \xi_j: 1 \leq j \leq \mathcal{N}_1 \} \right),
\end{align*}
where the $\{ \mathcal{V}_j \}$ are i.i.d.~copies of $\mathcal{V}$, independent of $({\boldsymbol{\mathcal{X}}_1}, \zeta, \{ \xi_j: j \geq 1\})$, and {$\boldsymbol{\mathcal{X}}_1$ is independent of $(\zeta, \{ \xi_j: j \geq 1\})$}.
\end{enumerate}
\end{theorem}

{\begin{remark} \label{R.Observations}
The following two convergence in probability results follow from Theorem~\ref{T.Main}.
\begin{itemize}
    \item Part (A) implies that 
    $$\max_{0 \leq r \leq k} \left|R_I^{(r)} - \mathcal{R}_\emptyset^{(r)}\right|\xrightarrow{P} 0, \qquad n \to \infty,$$
    since the dominated convergence theorem gives:
    \begin{align*}
        E\left[\max_{0 \leq r\leq k}\left|R_I^{(r)} - \mathcal{R}_\emptyset^{(r)}\right|^p\wedge 1\right] \leq \sum_{r = 0}^{k} E\left[\mathbb{E}_n\left[\left|R_I^{(r)} - \mathcal{R}_\emptyset^{(r)}\right|^p\right]\wedge 1\right] \to 0, \qquad n \to \infty.
    \end{align*}
    \item By using the second moment method one can show that for any bounded and continuous function $f$ on $\mathbb{R}^{k+1}$,
$$\frac{1}{n}\sum_{i = 1}^n f(R_i^{(0)}, R_i^{(1)}, ..., R_i^{(k)})\xrightarrow{P} E[f(\mathcal{R}_\emptyset^{(0)}, \mathcal{R}_\emptyset^{(1)},...\mathcal{R}_\emptyset^{(k)})], \qquad n \to \infty.$$
\end{itemize}
\end{remark}}
As a corollary we obtain the convergence of $R_I$ provided $\Phi$ also defines a contraction on $\mathbb{R}^n$. For this result the entire graph $G(V_n, E_n; \mathscr{A}_n)$ has been realized and remains fixed, so we use the conditional probability $\mathbf{P}_n(\cdot)$. 

\begin{cor} \label{C.MainContraction}
Suppose that in addition to the assumptions of Theorem~\ref{T.Main},  under Assumption~\ref{A.PhiMap}(4)(i), we have $\| C \|_p \leq K < 1$ $P$-a.s.  Then, if $\lambda_{k,n}(\cdot) = \mathbf{P}_n( \mathbf{R}^{(k)} \in \cdot)$, there exists a probability measure $\lambda_n$ such that
$$d_p(\lambda_{k,n}, \lambda_n) \to 0 \quad \mathbf{P}_n\text{-a.s}, \qquad k \to \infty.$$
Moreover, if we let $\mathbf{R} = (R_1, \dots, R_n) \in \mathbb{R}^n$ be distributed according to $\lambda_n$, and define $\mu_n(\cdot) = \mathbf{P}_n(R_I \in \cdot)$, where $I$ is uniformly distributed in $\{1, 2,\dots, n\}$, then, 
$$d_p(\mu_n, \nu) \xrightarrow{P} 0, \qquad n \to \infty,$$
for $\nu$ the limiting measure in Theorem~\ref{T.Main}.
\end{cor}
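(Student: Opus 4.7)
The plan is to first verify that the one-step transition kernel is a $K$-contraction in $W_p$, deduce the existence of $\lambda_n$ and the first convergence claim from the Banach fixed-point theorem, and then pass from the marginals of $\lambda_n$ to $\nu$ via a three-term triangle inequality that invokes Theorem~\ref{T.Main}.

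For the contraction, couple two copies $\mathbf{R}^{(k)}, \mathbf{\tilde R}^{(k)}$ of the chain via the common noise sequences $\boldsymbol{\zeta}^{(k)}, \boldsymbol{\xi}^{(k)}$ and an arbitrary coupling of the initial distributions. Using Assumption~\ref{A.PhiMap}(1)--(2), first conditioning on $\mathscr{G}_n$ and on the two previous states, then using $|g(R_j, \mathbf{X}_j) - g(\tilde R_j, \mathbf{X}_j)| \leq \sigma_+(\mathbf{X}_j)|R_j - \tilde R_j|$,
$$\mathbf{E}_n\!\left[|R_i^{(k+1)} - \tilde R_i^{(k+1)}|^p \,\Big|\, \mathbf{R}^{(k)}, \mathbf{\tilde R}^{(k)}\right]^{1/p} \leq \sum_{j \to i}\sigma_-(\mathbf{X}_i)\sigma_+(\mathbf{X}_j)\,|R_j^{(k)} - \tilde R_j^{(k)}| = (C\mathbf{d}^{(k)})_i,$$
with $d_j^{(k)} = |R_j^{(k)} - \tilde R_j^{(k)}|$. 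Taking the $\ell^p$-norm in $i$, interchanging $L^p(\mathbf{P}_n)$ and $\ell^p$ by Minkowski, and applying $\|C\|_p \leq K$ yields $\mathbf{E}_n[\|\mathbf{R}^{(k+1)} - \mathbf{\tilde R}^{(k+1)}\|_p^p]^{1/p} \leq K\,\mathbf{E}_n[\|\mathbf{R}^{(k)} - \mathbf{\tilde R}^{(k)}\|_p^p]^{1/p}$. Infimizing over initial couplings gives $W_p(T\mu, T\mu') \leq K\, W_p(\mu, \mu')$ for the kernel $T$. Assumption~\ref{A.PhiMap}(3) at $\mathbf{v} = \mathbf{0}$ shows $T\delta_{\mathbf 0}$ has finite $\ell^p$-moment, so the Banach fixed-point theorem on the complete space of probability measures on $\mathbb{R}^n$ with finite $\ell^p$-moment produces a unique stationary $\lambda_n$ with $W_p(\lambda_{k,n}, \lambda_n) \leq K^k W_p(\lambda_{0,n}, \lambda_n) \to 0$, $\mathbf{P}_n$-a.s.

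For the second assertion, set $\mu_{k,n}^*(\cdot) = \mathbf{P}_n(R_I^{(k)} \in \cdot)$ and decompose
$$d_p(\mu_n, \nu) \leq d_p(\mu_n, \mu_{k,n}^*) + d_p(\mu_{k,n}^*, \nu_k) + d_p(\nu_k, \nu).$$
The third term tends to zero as $k\to\infty$ by the final part of Theorem~\ref{T.Main}. The second term tends to zero in $P$-probability as $n \to \infty$ for each fixed $k$: the coupling of Theorem~\ref{T.Main} is built on a common probability space, so $\mathbf{E}_n[|R_I^{(k)} - \mathcal{R}_\emptyset^{(k)}|^p] \xrightarrow{P} 0$ by lifting the stated $\mathbb{E}_n$-convergence via the tower property and a conditional Markov inequality. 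For the first term, couple $\mathbf{R}\sim \lambda_n$ and $\mathbf{R}^{(k)}\sim \lambda_{k,n}$ optimally for $W_p$ on $(\mathbb{R}^n,\ell^p)$ and pick $I$ uniform and independent, so that
$$d_p(\mu_n, \mu_{k,n}^*)^p \leq \mathbf{E}_n[|R_I - R_I^{(k)}|^p] = \frac{1}{n}\,W_p(\lambda_n, \lambda_{k,n})^p \leq \frac{K^{kp}}{n}\,W_p(\lambda_n, \lambda_{0,n})^p.$$

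The main obstacle is to verify that $\frac{1}{n}W_p(\lambda_n, \lambda_{0,n})^p$ is tight in $n$, so that the first term can be made arbitrarily small in probability by choosing $k$ large (after $n$). Bounding $W_p(\lambda_n, \lambda_{0,n})^p \leq 2^{p-1}\bigl(\mathbf{E}_n[\|\mathbf{R}\|_p^p] + n\,E[|R_1^{(0)}|^p]\bigr)$, it suffices to control $u_i = \mathbf{E}_n[|R_i|^p]^{1/p}$ uniformly. Applying Assumption~\ref{A.PhiMap}(2)--(3) to the stationary identity $R_i \stackrel{\mathcal{D}}{=} \Phi(\mathbf{X}_i,\zeta,\{g(R_j,\mathbf{X}_j), \xi_{j,i}: j\to i\})$ together with Minkowski yields the componentwise inequality $\mathbf{u} \leq C\mathbf{u} + \mathbf{b}$ with $b_i = (C^{(0)}\mathbf{1})_i + \beta(\mathbf{X}_i)$; inverting through $\|C\|_p \leq K < 1$ gives $\|\mathbf{u}\|_p \leq (K_0 n^{1/p} + \|\boldsymbol{\beta}\|_p)/(1-K)$. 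Since Assumption~\ref{A.RGmodels} guarantees both $E[\beta(\boldsymbol{X}_\emptyset)^p] < \infty$ and $\beta(\mathbf{X}_I) \xrightarrow{d_p} \beta(\boldsymbol{X}_\emptyset)$, one has $\mathbb{E}_n\!\left[\frac{1}{n}\|\boldsymbol{\beta}\|_p^p\right] = \mathbb{E}_n[\beta(\mathbf{X}_I)^p] = O_P(1)$, so $\frac{1}{n}\|\mathbf{u}\|_p^p = O_P(1)$ by Markov. Conditioning on the high-probability event on which this bound holds and then choosing $k$ large closes the estimate.
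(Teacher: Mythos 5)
Your proof is correct and follows the same route as the paper: establish that the one-step transition kernel is a $W_p$-contraction on $\mathbb{R}^n$ via the coupling of common noises and $\|C\|_p\le K<1$, deduce existence of the stationary $\lambda_n$ with geometric convergence, and then combine with Theorem~\ref{T.Main} through the three-term triangle inequality $d_p(\mu_n,\nu)\le d_p(\mu_n,\mu_{k,n})+d_p(\mu_{k,n},\nu_k)+d_p(\nu_k,\nu)$. The only differences are presentational. The paper proves the convergence to $\lambda_n$ via an explicit Cauchy estimate (Lemma~\ref{L.VectorMC}), which simultaneously produces a computable bound $(\mathbf{E}_n[\|\mathbf{R}^{(k)}-\mathbf{R}\|_p^p])^{1/p}\le \|C\|_p^k\bigl(2r_0n^{1/p}+\|C^{(0)}\mathbf{e}+\boldsymbol\beta\|_p/(1-\|C\|_p)\bigr)$ by iterating the vector inequality $\bar{\mathbf{R}}^{(m)}\le C\bar{\mathbf{R}}^{(m-1)}+C^{(0)}\mathbf{e}+\boldsymbol\beta$ from time $0$; you instead invoke the Banach fixed-point theorem and then recover essentially the same bound by applying the stationary identity $\mathbf{u}\le C\mathbf{u}+C^{(0)}\mathbf{e}+\boldsymbol\beta$ to $\lambda_n$ directly and inverting through $\|C\|_p\le K<1$. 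Both yield the same tightness statement $\frac1n\mathbf{E}_n[\|\mathbf{R}\|_p^p]=O_P(1)$ needed to let $k\to\infty$ after $n\to\infty$, and both ultimately rely on $\frac1n\|\boldsymbol\beta\|_p^p=\mathbb{E}_n[\beta(\mathbf{X}_I)^p|\mathscr{G}_n]$ being tight via Assumption~\ref{A.RGmodels} and $\frac1n\|C^{(0)}\mathbf{e}\|_p^p\le K_0^p$ via Assumption~\ref{A.PhiMap}(4)(i). One small remark: you upgrade the middle term to the $\mathscr{G}_n$-conditional level before invoking Theorem~\ref{T.Main}, whereas the paper works at the $\mathscr{F}_n$-level throughout; this is harmless, since the conditional Markov argument you sketch lifts $\mathbb{P}_n$-convergence to $\mathbf{P}_n$-convergence, but you could equally just cite the $d_p(\mu_{k,n},\nu_k)\xrightarrow{P}0$ conclusion already stated in Theorem~\ref{T.Main}.
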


In other words the corollary states that the limits in the following diagram commute
\begin{figure}
\[ \begin{tikzcd}
R_I^{(k,n)} \arrow{r}{n\to\infty} \arrow[swap]{d}{k\to\infty} & \mathcal{R}_\emptyset^{(k)} \arrow{d}{k\to\infty} \\%
R_I^{(\infty,n)} \arrow{r}{n\to\infty}& \mathcal{R}^*
\end{tikzcd}
\qquad \text{equivalently} \qquad
\begin{tikzcd}
\mu_{k,n} \arrow{r}{n\to\infty} \arrow[swap]{d}{k\to\infty} & \nu_k \arrow{d}{k\to\infty} \\%
\mu_n \arrow{r}{n\to\infty}& \nu
\end{tikzcd}
\]
\caption{Diagram representing the different limits, on the left in terms of the random variables and on the right in terms of their corresponding probability measures. In each diagram the quantities on the left are defined on the graph while quantities on the right are defined on the limiting tree.}
\end{figure}

\section{Solutions to branching recursions} \label{S.EndogenousSol}

We now explain what it means to be the attracting endogenous solution to a branching distributional fixed-point equation. To do this, consider a (non delayed) marked Galton-Watson process $\mathcal{T}({\boldsymbol{\mathcal{Z}}})$, as in Definition~\ref{D.markGW}. The marks in Theorem~\ref{T.Main} correspond to ${\boldsymbol{\mathcal{Z}}_\mathbf{i}} = ({\boldsymbol{\mathcal{X}}_\mathbf{i}}, \zeta_\mathbf{i}, \{ \xi_{(\mathbf{i},j)} \}_{j \geq 1} )$, where $(\zeta_\mathbf{i}, \{ \xi_{(\mathbf{i},j)} \}_{j \geq 1})$ is independent of ${\boldsymbol{\mathcal{X}}_\mathbf{i}}$ and has the same distribution as the noise vectors in the recursion.  We use ${\boldsymbol{\mathcal{Z}}} = ({\boldsymbol{\mathcal{X}}}, \zeta, \{ \xi_j\}_{j \geq 1})$ to denote a generic branching vector having the common distribution of the $\{ {\boldsymbol{\mathcal{Z}}_{\bf i}}: {\bf i} \in \mathcal{U} \}$. 

To construct a solution to the branching distributional fixed-point equation:
\begin{equation} \label{eq:limitSFPE}
\mathcal{V} \stackrel{\mathcal{D}}{=} \Psi\left( {\boldsymbol{\mathcal{X}}}, \zeta, \{ \mathcal{V}_j, \xi_j: 1 \leq j \leq \mathcal{N} \} \right),
\end{equation}
where the $\{ \mathcal{V}_j: j \geq 1\}$ are i.i.d.~copies of $\mathcal{V}$, independent of ${\boldsymbol{\mathcal{Z}}}$, let $\eta_0$ be a probability measure on $\mathbb{R}$, and define for any $k \geq 1$:
\begin{equation} \label{eq:LimitConstruc}
\mathcal{V}_\mathbf{i}^{(r)} = \Psi \left( {\boldsymbol{\mathcal{X}}_\mathbf{i}}, \zeta_\mathbf{i}, \{ \mathcal{V}_{(\mathbf{i},j)}^{(r-1)}, \xi_{(\mathbf{i},j)}: 1 \leq j \leq \mathcal{N}_\mathbf{i} \} \right), \qquad \mathbf{i} \in \mathcal{A}_{k-r}, 1 \leq r \leq k,
\end{equation}
and the $\{ \mathcal{V}_\mathbf{i}^{(0)}: \mathbf{i} \in \mathcal{A}_k \}$ i.i.d.~with common distribution $\eta_0$; let $\eta_k(\cdot) = P( \mathcal{V}_\emptyset^{(k)} \in \cdot \, )$. 

When the map $\Psi$ defines a contraction under a Wasserstein metric, i.e., when
$$d_p(\eta_{k+1}, \eta_k) \leq c d_p(\eta_k, \eta_{k-1}),$$
for some $0 < c < 1$ and some $p \in [1, \infty)$, we have that there exists a probability measure $\eta$ such that
$$d_p(\eta_k, \eta) \to 0, \qquad k \to \infty.$$
We call this $\eta$ the solution to \eqref{eq:limitSFPE} associated to the initial distribution $\eta_0$. 

\begin{defn}
Let $\mathcal{G}_\mathcal{T} = \sigma\left( {\boldsymbol{\mathcal{Z}}_{\mathbf{i}}}: \mathbf{i} \in \mathcal{T} \right)$. We say that that $\eta$ is an {\em endogenous} solution to the distributional fixed-point equation \eqref{eq:limitSFPE} if there exists a random variable $\mathcal{V}_\emptyset$, distributed according to $\eta$, such that $\mathcal{V}_\emptyset$ is $\mathcal{G}_{\mathcal{T}}$-measurable. 
\end{defn}

The solution in Theorem~\ref{T.Main} corresponds to choosing $\eta_0(\cdot) = P( g(0, {\boldsymbol{\mathcal{X}}_1}) \in \cdot \,)$, and it is endogenous since the $\{ g(0, {\boldsymbol{\mathcal{X}}_\mathbf{i}}): \mathbf{i} \in \mathcal{A}_k\}$ are measurable with respect to $\mathcal{G}_{\mathcal{T}}$ for all $k \geq 0$. We refer to this solution as the {\em attracting endogenous solution} to \eqref{eq:limitSFPE}. In general, equations like \eqref{eq:limitSFPE} can have multiple solutions  \cite{Als_Big_Mei_10, Alsm_Mein_10b, Biggins_98},  and in some cases, even multiple endogenous solutions \cite{Alsm_Mein_10b}. We refer the reader to \cite{Aldo_Band_05} and \cite{Mac_Stu_Swa_18} for a thorough discussion of the notion of {\em endogeny} and its characterization. 

We also point out that, in general, the existence of a solution $\mathcal{V}$ to the branching distributional fixed-point equation does not require the map $\Psi$ to define a contraction (i.e., $c \in (0,1)$), but the contraction approach is the easiest to state with the level of generality we aim for, since other approaches (e.g., stochastic monotonicity) may require more specific conditions on the initial distribution $\eta_0$.

\section{Proofs} \label{S.Proofs}

This section contains the proofs of Theorem~\ref{T.Main} and Corollary~\ref{C.MainContraction}. 

We start by constructing the random variables $\{ \mathcal{R}_\emptyset^{(0)}, \mathcal{R}_\emptyset^{(1)}, \dots, \mathcal{R}_\emptyset^{(k)} \}$ in Theorem~\ref{T.Main}. Fix $k \geq 1$, and choose $I \in V_n$. Let $\mathcal{G}_I^{(k)}(\mathbf{X})$ denote the marked subgraph obtained by exploring the depth-$k$ in-component of vertex $I$, and let $\mathcal{T}^{(k)}({\boldsymbol{\mathcal{X}}}) := \mathcal{T}^{(k)}_{\emptyset(I)}({\boldsymbol{\mathcal{X}}})$ be its strong coupling, as described in Definition~\ref{D.StrongCoupling}. We say that the unmarked coupling has been successful if
$$\mathcal{G}_I^{(k)} \simeq \mathcal{T}^{(k)},$$ 
in which case $\theta(\mathbf{i}) := \theta_{I}(\mathbf{i}) \in V_n$ denotes the identity of the vertex that corresponds to node $\mathbf{i}$ in the tree. 

Assume first that the unmarked coupling of $\mathcal{G}_I^{(k)}$ and $\mathcal{T}^{(k)}$ has been successful. Recall that noises on the graph $G(V_n, E_n;\mathscr{A}_n)$ are given by the sequence $\{ \zeta_i^{(r)}, \xi_{j,i}^{(r)}: i,j \in V_n, \, r \geq 0 \}$.  Define $\mathcal{R}_\emptyset^{(0)} = R_I^{(0)}$, and
$$\mathcal{R}_\emptyset^{(k)} = \Phi\left( {\boldsymbol{\mathcal{X}}_\emptyset}, \zeta_{I}^{(k-1)} , \{ \mathcal{V}_{j}^{(k-1)} , \xi_{\theta(j),I}^{(k-1)}: 1 \leq j \leq \mathcal{N}_\emptyset \} \right),$$
where
\begin{align*}
\mathcal{V}_\mathbf{i}^{(r)} &= \Psi\left( {\boldsymbol{\mathcal{X}}_\mathbf{i}}, \zeta_{\theta(\mathbf{i})}^{(k-r)}, \{ \mathcal{V}_{(\mathbf{i},j)}^{(r-1)}, \xi_{ \theta((\mathbf{i},j)), \theta(\mathbf{i})}^{(k-r)} : 1 \leq j \leq \mathcal{N}_\mathbf{i} \} \right), \qquad \mathbf{i} \in \mathcal{A}_{k-r}, \, 1 \leq r < k, \\
\mathcal{V}_\mathbf{i}^{(0)} &= g(R_{\theta(\mathbf{i})}^{(0)}, {\boldsymbol{\mathcal{X}}_\mathbf{i}}), \qquad \mathbf{i} \in \mathcal{A}_k.
\end{align*}
{Figure~\ref{F.Construction} depicts the construction for the case $k = 3$, which shows how the computation of $\mathcal{R}_\emptyset^{(k)}$ requires the entire marked tree $\mathcal{T}^{(k)}(\boldsymbol{\mathcal{X}})$.}

\begin{figure}[h]
\includegraphics[scale=0.95]{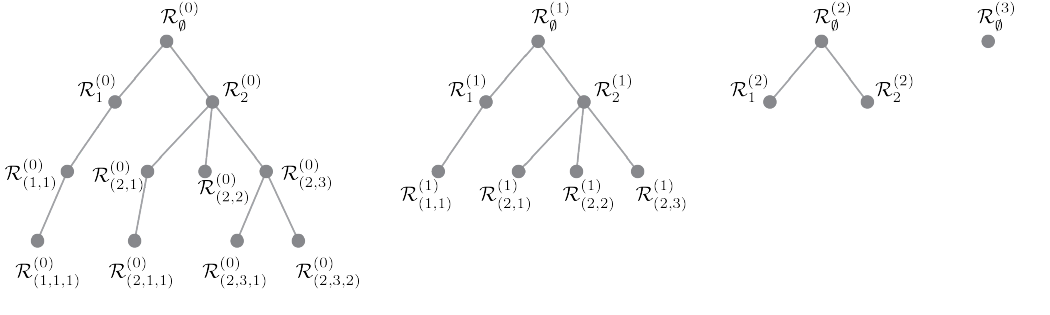}
\caption{Trajectory of the root on the limiting tree. The picture depicts the construction of $\{\mathcal{R}_\emptyset^{(0)}, \mathcal{R}_\emptyset^{(1)}, \mathcal{R}_\emptyset^{(2)}, \mathcal{R}_\emptyset^{(3)} \}$ using the neighborhood of depth $k = 3$ of the root node $\emptyset$, i.e., $\mathcal{T}^{(3)}(\boldsymbol{\mathcal{X}})$.} \label{F.Construction}
\end{figure}

If the unmarked coupling of $\mathcal{G}_I^{(k)}$ and $\mathcal{T}^{(k)}$ is unsuccessful, then for any node $\mathbf{i} \in \mathcal{T}^{(k)}$ that does not have a coupled vertex in $\mathcal{G}_I^{(k)}$, or that has an offspring $(\mathbf{i},j)$ that does not have a coupled vertex in $\mathcal{G}_I^{(k)}$, sample an independent copy of the noise vector $(\zeta, \{ \xi_j\}_{j \geq 1})$, say $(\zeta_\mathbf{i}, \{ \xi_{(\mathbf{i},j)} \}_{j \geq 1}) $ and let
$$\mathcal{V}_\mathbf{i}^{(r)} =  \Psi\left( {\boldsymbol{\mathcal{X}}_\mathbf{i}}, \zeta_\mathbf{i}, \{ \mathcal{V}_{(\mathbf{i},j)}^{(r-1)}, \xi_{ (\mathbf{i},j)} : 1 \leq j \leq \mathcal{N}_\mathbf{i} \} \right), \qquad \text{if }  \mathbf{i} \in \mathcal{A}_{k-r}, \, 1 \leq r < k,$$
or
$$\mathcal{R}_\emptyset^{(k)} = \Phi\left( {\boldsymbol{\mathcal{X}}_\emptyset}, \zeta_\emptyset , \{ \mathcal{V}_{j}^{(k-1)} , \xi_j: 1 \leq j \leq \mathcal{N}_\emptyset \} \right), \qquad \text{if } \mathbf{i} = \emptyset.$$ 

The first lemma below establishes the finiteness of the $p$th moment of $\mathcal{R}_\emptyset^{(k)}$ and $\mathcal{V}^{(k)}_1$. Throughout the paper, we use the convention $\sum_{i=a}^b x_i \equiv 0$ if $a > b$. 

\begin{lemma} \label{L.Moments}
{Suppose Assumption~\ref{A.PhiMap} holds, the graph sequence $\mathbf{G}$ satisfies Assumption~\ref{A.RGmodels}, and $r_0^p :=  E\left[ \left|R_1^{(0)}\right|^p \right] < \infty$. Then, we have that for any $k \geq 0$, 
\begin{align*}
\left( E\left[ \left|\mathcal{V}_1^{(k)} \right|^p \right] \right)^{1/p} &\leq \left(  \left( E\left[ \left(\sigma_+(\boldsymbol{\mathcal{X}}_1) \beta(\boldsymbol{\mathcal{X}}_1) \right)^p \right] \right)^{1/p} + \left( E\left[ \left|g(0, \boldsymbol{\mathcal{X}}_1) \right|^p \right] \right)^{1/p}  \right) \sum_{r=0}^{k-1} c^r \\
&\hspace{5mm} + c^{k} \left(  \left( E\left[  \sigma_+( \boldsymbol{\mathcal{X}}_1)^p \right]   \right)^{1/p}  r_0 +  \left( E\left[ \left|g(0, \boldsymbol{\mathcal{X}}_1) \right|^p \right] \right)^{1/p} \right) < \infty,
\end{align*}
where $c = \left( E[(\mathcal{N}_1 \sigma_-(\boldsymbol{\mathcal{X}}_1) \sigma_+(\boldsymbol{\mathcal{X}}_1))^p] \right)^{1/p}$ and 
$$\left( E\left[ \left|\mathcal{R}_\emptyset^{(k+1)} \right|^p \right] \right)^{1/p} \leq \left( E\left[ \left| \mathcal{V}_1^{(k)} \right|^p \right] \right)^{1/p}  \left( E\left[ \left( \mathcal{N}_\emptyset \sigma_-(\boldsymbol{\mathcal{X}}_\emptyset)    \right)^p \right] \right)^{1/p} +  \left( E\left[ \beta(\boldsymbol{\mathcal{X}}_\emptyset)^p \right] \right)^{1/p} < \infty.$$}
\end{lemma}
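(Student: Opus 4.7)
The plan is induction on $k$. The key observation from Assumption~\ref{A.PhiMap}(2), setting $\tilde r = 0$ in the Lipschitz bound for $g$, is that
\[
|\Psi(\mathbf{x}, \zeta, \{v_j, \xi_j\})| \leq \sigma_+(\mathbf{x}) \, |\Phi(\mathbf{x}, \zeta, \{v_j, \xi_j\})| + |g(0, \mathbf{x})|,
\]
since $\Psi = g(\Phi, \mathbf{x})$. Write $a_k := (E[|\mathcal{V}_1^{(k)}|^p])^{1/p}$. The base case $a_0$ follows from $\mathcal{V}_\mathbf{i}^{(0)} = g(R^{(0)}, \boldsymbol{X}_\mathbf{i})$, independence of $R^{(0)}$ and $\boldsymbol{X}_\mathbf{i}$, the same $g$-Lipschitz bound, and Minkowski.

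For the inductive step, I first condition on $(\boldsymbol{X}_1, \mathcal{N}_1, \{\mathcal{V}_{(1,j)}^{(k-1)}\}_j)$ and apply Assumption~\ref{A.PhiMap}(3), which bounds the conditional $L^p$-norm of $\Phi$ with respect to the noise $(\zeta_1, \{\xi_{(1,j)}\})$, followed by the factorization above, to obtain
\[
\bigl( E\bigl[|\mathcal{V}_1^{(k)}|^p \bigm| \boldsymbol{X}_1, \mathcal{N}_1, \{\mathcal{V}_{(1,j)}^{(k-1)}\}\bigr] \bigr)^{1/p} \leq \sigma_+(\boldsymbol{X}_1)\Bigl(\sigma_-(\boldsymbol{X}_1)\sum_{j=1}^{\mathcal{N}_1}|\mathcal{V}_{(1,j)}^{(k-1)}| + \beta(\boldsymbol{X}_1)\Bigr) + |g(0, \boldsymbol{X}_1)|.
\]
Taking $p$th powers, expectations, and applying Minkowski on the outer integral splits the right-hand side into three terms; the last two are exactly $(E[(\sigma_+(\boldsymbol{X}_1)\beta(\boldsymbol{X}_1))^p])^{1/p}$ and $(E[|g(0,\boldsymbol{X}_1)|^p])^{1/p}$. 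For the first term, the branching structure of $\mathcal{T}$ gives that $\{\mathcal{V}_{(1,j)}^{(k-1)}: 1 \leq j \leq \mathcal{N}_1\}$ are i.i.d.\ copies of $\mathcal{V}_1^{(k-1)}$ and independent of $(\boldsymbol{X}_1, \mathcal{N}_1)$, since the subtrees rooted at the children of node $1$ are constructed from disjoint pieces of the i.i.d.\ mark sequence. Conditioning on $(\boldsymbol{X}_1, \mathcal{N}_1)$ and using Minkowski on the sum then yields
\[
\Bigl( E\Bigl[(\sigma_+(\boldsymbol{X}_1)\sigma_-(\boldsymbol{X}_1))^p \Bigl(\sum_{j=1}^{\mathcal{N}_1} |\mathcal{V}_{(1,j)}^{(k-1)}|\Bigr)^p\Bigr]\Bigr)^{1/p} \leq c \cdot a_{k-1},
\]
with $c = (E[(\mathcal{N}_1\sigma_-(\boldsymbol{X}_1)\sigma_+(\boldsymbol{X}_1))^p])^{1/p}$. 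This produces the affine recursion $a_k \leq c\,a_{k-1} + b$ with $b = (E[(\sigma_+(\boldsymbol{X}_1)\beta(\boldsymbol{X}_1))^p])^{1/p} + (E[|g(0,\boldsymbol{X}_1)|^p])^{1/p}$; iterating yields $a_k \leq c^k a_0 + b \sum_{r=0}^{k-1} c^r$, which after substituting $a_0$ is exactly the closed-form bound in the statement. Finiteness of every factor is delivered by the moment conditions in Assumption~\ref{A.RGmodels} together with $r_0 < \infty$.

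The bound on $\mathcal{R}_\emptyset^{(k+1)}$ is the one-step root analogue: since $\mathcal{R}_\emptyset^{(k+1)} = \Phi(\boldsymbol{X}_\emptyset, \zeta, \{\mathcal{V}_j^{(k)}, \xi_j\})$ and $\{\mathcal{V}_j^{(k)}: 1 \leq j \leq \mathcal{N}_\emptyset\}$ are i.i.d.\ copies of $\mathcal{V}_1^{(k)}$ independent of $(\boldsymbol{X}_\emptyset, \mathcal{N}_\emptyset)$, one applies Assumption~\ref{A.PhiMap}(3) conditionally on the root's data followed by Minkowski on both the outer integral and the inner sum; the factor $(E[(\mathcal{N}_\emptyset \sigma_-(\boldsymbol{X}_\emptyset))^p])^{1/p}$ then appears in place of $c$, and no extra $\sigma_+$ or $g(0,\cdot)$ term is generated since there is no outer $g$ at the root. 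The main subtlety throughout is the careful order of conditioning: apply R(3) first while the bound holds only with respect to the noise, and only afterwards unravel the outer expectation using the i.i.d.\ subtree structure.
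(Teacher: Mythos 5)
Your proof is correct and follows essentially the same route as the paper: both deduce the pointwise bound $|\Psi| \le \sigma_+(\mathbf{x})|\Phi| + |g(0,\mathbf{x})|$ from Assumption~\ref{A.PhiMap}(2), apply Assumption~\ref{A.PhiMap}(3) conditionally on the noise, use Minkowski together with the i.i.d.\ subtree structure to obtain the affine recursion $a_k \le c\,a_{k-1} + d$, iterate, and bound $a_0$ via the $g$-Lipschitz estimate; the paper writes the chain of inequalities directly while you frame it as explicit induction, but the steps are identical.
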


\begin{proof}
Note that under {Assumption~\ref{A.PhiMap}(3)} we have for any $k \geq 1$,
\begin{align*}
\left( E\left[ \left|\mathcal{R}^{(k+1)}_\emptyset \right|^p \right] \right)^{1/p} &\leq  \left( E\left[ \left( \sigma_-({\boldsymbol{\mathcal{X}}_\emptyset}) \sum_{j=1}^{\mathcal{N}_\emptyset}  \left| \mathcal{V}_j^{(k)} \right| + \beta({\boldsymbol{\mathcal{X}}_\emptyset}) \right)^p \right] \right)^{1/p} \\
&\leq  \left( E\left[ \left( \sigma_-({\boldsymbol{\mathcal{X}}_\emptyset}) \sum_{j=1}^{\mathcal{N}_\emptyset} \left( E\left[ \left| \mathcal{V}_j^{(k)} \right|^p \right] \right)^{1/p}  + \beta({\boldsymbol{\mathcal{X}}_\emptyset}) \right)^p \right] \right)^{1/p} \\
&= \left( E\left[ \left( \sigma_-({\boldsymbol{\mathcal{X}}_\emptyset}) \mathcal{N}_\emptyset \left( E\left[ \left| \mathcal{V}_1^{(k)} \right|^p \right] \right)^{1/p}  + \beta({\boldsymbol{\mathcal{X}}_\emptyset}) \right)^p \right] \right)^{1/p}  \\
&\leq  \left( E\left[ \left| \mathcal{V}_1^{(k)} \right|^p \right] \right)^{1/p}  \left( E\left[ \left( \mathcal{N}_\emptyset \sigma_-({\boldsymbol{\mathcal{X}}_\emptyset})    \right)^p \right] \right)^{1/p} +  \left( E\left[ \beta({\boldsymbol{\mathcal{X}}_\emptyset})^p \right] \right)^{1/p},
\end{align*}
where in the second and third inequalities we used Minkowski's inequality, the first time conditionally on {$\boldsymbol{\mathcal{X}}_\emptyset$ and using the independence of the $\{\mathcal{V}_j^{(k)}: j \geq 1\}$ and $\boldsymbol{\mathcal{X}}_\emptyset$}. {The last two expectations are finite by Assumption~\ref{A.RGmodels}.}

{Under Assumption~\ref{A.PhiMap}(2) and a} slight modification of the same steps yields

\begin{align*}
\left( E\left[ \left|\mathcal{V}^{(k)}_1 \right|^p \right] \right)^{1/p} &\leq \left( E\left[ {\sigma_+(\boldsymbol{\mathcal{X}}_1)^p} \left| \mathcal{R}_1^{(k-1)}  \right|^p \right] \right)^{1/p} + \left( E\left[ \left|g(0, {\boldsymbol{\mathcal{X}}_1}) \right|^p \right] \right)^{1/p}  \\
&\leq \left( E\left[ \left| \mathcal{V}_1^{(k-1)} \right|^p \right] \right)^{1/p}  \left( E\left[ \left( \mathcal{N}_1 \sigma_-({\boldsymbol{\mathcal{X}}_1}) \sigma_+({\boldsymbol{\mathcal{X}}_1})   \right)^p \right] \right)^{1/p} +  \left( E\left[ \left(\sigma_+({\boldsymbol{\mathcal{X}}_1}) \beta({\boldsymbol{\mathcal{X}}_1}) \right)^p \right] \right)^{1/p} \\
&\hspace{5mm} + \left( E\left[ \left|g(0, {\boldsymbol{\mathcal{X}}_1}) \right|^p \right] \right)^{1/p} \\
&= c \left( E\left[ \left| \mathcal{V}_1^{(k-1)} \right|^p \right] \right)^{1/p} + d,
\end{align*}
where $d = \left( E\left[ \left(\sigma_+({\boldsymbol{\mathcal{X}}_1}) \beta({\boldsymbol{\mathcal{X}}_1}) \right)^p \right] \right)^{1/p} + \left( E\left[ \left|g(0, {\boldsymbol{\mathcal{X}}_1}) \right|^p \right] \right)^{1/p} $, {and $d < \infty$ by Assumption~\ref{A.RGmodels}}. Now let $a_k = \left( E\left[ \left|\mathcal{V}^{(k)}_1 \right|^p \right] \right)^{1/p}$ and {iterate} the inequality we just derived {to obtain}:
$$a_{k} \leq d + c a_{k-1} \leq d \sum_{r=0}^{k-1} c^r + c^{k} a_0,$$
where $a_0^p = E\left[ \left|g(R_1^{(0)}, {\boldsymbol{\mathcal{X}}_1})\right|^p \right]$, $R_1^{(0)}$ is distributed according to $\mu_0$, and is independent of ${\boldsymbol{\mathcal{X}}_1}$. To complete the proof note that
\begin{align*}
a_0 &\leq \left( E\left[ \left| g(R_1^{(0)}, {\boldsymbol{\mathcal{X}}_1}) -g(0, {\boldsymbol{\mathcal{X}}_1}) \right|^p \right] \right)^{1/p} +  \left( E\left[ \left|g(0, {\boldsymbol{\mathcal{X}}_1}) \right|^p \right] \right)^{1/p} \\
&\leq  \left( E\left[  \sigma_+( {\boldsymbol{\mathcal{X}}_1})^p \right]  E\left[ \left| R_1^{(0)}  \right|^p \right] \right)^{1/p}  +  \left( E\left[ \left|g(0, {\boldsymbol{\mathcal{X}}_1}) \right|^p \right] \right)^{1/p}. 
\end{align*} 
\end{proof}

\begin{theorem} \label{T.CouplingHolds}
Suppose Assumption~\ref{A.PhiMap} holds and {the graph sequence $\mathbf{G} = \{ G(V_n, E_n; \mathscr{A}_n): n \geq 1\}$} satisfies Assumption~\ref{A.RGmodels}. For any $k \geq 0$, $\epsilon \in (0,1)$, and 
$$F_\emptyset^{(k,\epsilon)} =\bigcap_{s=0}^k \bigcap_{\mathbf{i} \in \mathcal{A}_s}  \left\{  \rho(\mathbf{X}_{\theta(\mathbf{i})}, {\boldsymbol{\mathcal{X}}_{\mathbf{i}}}) \leq \epsilon \right\},$$
we have
\begin{align*}
&\left( \mathbb{E}_n\left[ \left| R_I^{(k)} - \mathcal{R}_\emptyset^{(k)} \right|^p 1(F_\emptyset^{(k,\epsilon)})  \right] \right)^{1/p} \\
&\leq  H \epsilon^\alpha \left( 1 + \left( E\left[ (\sigma_-({\boldsymbol{\mathcal{X}}_\emptyset}) \mathcal{N}_\emptyset)^p \right] \right)^{1/p} \left( E\left[ \left| \mathcal{V}_1^{(k-1)} \right|^p \right] \right)^{1/p} + \left( E[\beta({\boldsymbol{\mathcal{X}}_\emptyset})^p ] \right)^{1/p} \right) \\
&\hspace{5mm} + (1+ H\epsilon^\alpha) \left( E\left[ (\sigma_-({\boldsymbol{\mathcal{X}}_\emptyset}) \mathcal{N}_\emptyset)^p \right] \right)^{1/p} b_{k-1},
\end{align*}
where $\{b_r: r \geq 1\}$ is a sequence satisfying $b_0 = Q\epsilon^\gamma \left(1 + r_0 \left(E[ \sigma_+({\boldsymbol{\mathcal{X}}_1})^p ] \right)^{1/p}\right)$, $r_0^p =  E[|R_1^{(0)}|^p] $ and for $1 \leq r < k$, 
\begin{align*}
b_r &\leq w(\epsilon) \left( 1 + \left( E\left[ \sigma_+({\boldsymbol{\mathcal{X}}_1})^p \right] \right)^{1/p} + c \left( E\left[ \left| \mathcal{V}_1^{(r-1)}\right|^p \right] \right)^{1/p} + \left(E \left[ (\sigma_+({\boldsymbol{\mathcal{X}}_1})\beta({\boldsymbol{\mathcal{X}}_1}))^p \right] \right)^{1/p}  \right) \\
&\hspace{5mm} + (1+w(\epsilon)) c b_{r-1},
\end{align*}
with $c = \left( E[ (\mathcal{N}_1 \sigma_-({\boldsymbol{\mathcal{X}}_1}) \sigma_+({\boldsymbol{\mathcal{X}}_1}))^p ]\right)^{1/p}$ and $w(\epsilon) = H \epsilon^\alpha + Q\epsilon^\gamma + HQ \epsilon^{\alpha+\gamma}$. 
\end{theorem}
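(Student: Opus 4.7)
The plan is to prove the bound by induction on the depth level $r \in \{0,1,\dots,k\}$, repeatedly using triangle-inequality insertions that separate the two sources of discrepancy between the graph-side iterate and its tree-side counterpart: namely, (i) the graph vertex $\theta(\mathbf{i})$ and the tree node $\mathbf{i}$ at each matched position carry different attributes (differing by at most $\epsilon$ in $\rho$ on $F_\emptyset^{(k,\epsilon)}$), and (ii) the numerical arguments fed recursively into $\Phi$ differ. On $F_\emptyset^{(k,\epsilon)}$ the unmarked coupling succeeds and the two processes share their noises $\zeta_{\theta(\mathbf{i})}^{(\cdot)}$ and $\xi_{\theta((\mathbf{i},j)),\theta(\mathbf{i})}^{(\cdot)}$; type~(i) errors will be handled by the H\"older estimates of Assumption~\ref{A.PhiMap}(5) and type~(ii) by the Lipschitz estimates of Assumption~\ref{A.PhiMap}(1) and~(2).

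\textbf{Root step.} On $F_\emptyset^{(k,\epsilon)}$ identify $\{j \to I\}$ with the children $\mathcal{A}_1$ via $\theta$, so $R_I^{(k)}$ and $\mathcal{R}_\emptyset^{(k)}$ differ only in their root mark and in their recursive arguments. Insert the intermediate
\[
J = \Phi\bigl(\boldsymbol{X}_\emptyset, \zeta_I^{(k-1)}, \{V_{\theta(j)}^{(k-1)}, \xi_{\theta(j),I}^{(k-1)}\}_{1\le j\le \mathcal{N}_\emptyset}\bigr),
\]
so that $|R_I^{(k)} - \mathcal{R}_\emptyset^{(k)}| \le |R_I^{(k)} - J| + |J - \mathcal{R}_\emptyset^{(k)}|$. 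For the first term, Assumption~\ref{A.PhiMap}(5) applied with first argument $\boldsymbol{X}_\emptyset$ and second argument $\mathbf{X}_I$, together with $\rho(\boldsymbol{X}_\emptyset,\mathbf{X}_I) \le \epsilon$, yields an $L_p$-bound of $H\epsilon^\alpha\bigl(1 + \sigma_-(\boldsymbol{X}_\emptyset)\sum_j |V_{\theta(j)}^{(k-1)}| + \beta(\boldsymbol{X}_\emptyset)\bigr)$. Splitting $|V_{\theta(j)}^{(k-1)}| \le |\mathcal{V}_j^{(k-1)}| + |V_{\theta(j)}^{(k-1)} - \mathcal{V}_j^{(k-1)}|$ and applying Minkowski's inequality conditional on $(\boldsymbol{X}_\emptyset, \mathcal{N}_\emptyset)$, together with the i.i.d.~structure of $\{\mathcal{V}_j^{(k-1)}\}$ on the tree, produces the first line of the claimed bound plus a residue $H\epsilon^\alpha (E[(\sigma_-(\boldsymbol{X}_\emptyset)\mathcal{N}_\emptyset)^p])^{1/p} b_{k-1}$. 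For the second term, Assumption~\ref{A.PhiMap}(1) gives $(\mathbf{E}_n[|J - \mathcal{R}_\emptyset^{(k)}|^p])^{1/p} \le \sigma_-(\boldsymbol{X}_\emptyset)\sum_j |V_{\theta(j)}^{(k-1)} - \mathcal{V}_j^{(k-1)}|$, whose $L_p$ norm is at most $(E[(\sigma_-(\boldsymbol{X}_\emptyset)\mathcal{N}_\emptyset)^p])^{1/p} b_{k-1}$. Adding the two contributions produces the $(1 + H\epsilon^\alpha)$ prefactor in the stated bound.

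\textbf{Inductive recursion for $b_r$.} Define $b_r$ as a uniform bound on $\bigl(\mathbb{E}_n[|V_{\theta(\mathbf{i})}^{(r)} - \mathcal{V}_\mathbf{i}^{(r)}|^p \, 1(F_\emptyset^{(k,\epsilon)})]\bigr)^{1/p}$ as $\mathbf{i}$ ranges over $\mathcal{A}_{k-r}$. For the base case $r=0$, $V_{\theta(\mathbf{i})}^{(0)} - \mathcal{V}_\mathbf{i}^{(0)} = g(R_{\theta(\mathbf{i})}^{(0)}, \mathbf{X}_{\theta(\mathbf{i})}) - g(R_{\theta(\mathbf{i})}^{(0)}, \boldsymbol{X}_\mathbf{i})$, and the H\"older estimate for $g$ in Assumption~\ref{A.PhiMap}(5) with $\mathbf{x} = \boldsymbol{X}_\mathbf{i}$, combined with $\rho \le \epsilon$ and $(E[|R_1^{(0)}|^p])^{1/p} = r_0$, gives the stated $b_0$. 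For $r \ge 1$ write $V_{\theta(\mathbf{i})}^{(r)} = g(U, \mathbf{X}_{\theta(\mathbf{i})})$ and $\mathcal{V}_\mathbf{i}^{(r)} = g(\tilde U, \boldsymbol{X}_\mathbf{i})$, where $U$ and $\tilde U$ are the inner $\Phi$-outputs on the graph and tree sides. Passing through the intermediate $g(U, \boldsymbol{X}_\mathbf{i})$ lets R(5) (oriented with $\mathbf{x} = \boldsymbol{X}_\mathbf{i}$) handle the mark swap with prefactor $\sigma_+(\boldsymbol{X}_\mathbf{i})$ and contribution $Q\epsilon^\gamma$, while R(2) handles the inner swap $U \to \tilde U$ with multiplier $\sigma_+(\boldsymbol{X}_\mathbf{i})$. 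The difference $|U - \tilde U|$ is then treated exactly as in the root step via R(5) and R(1) for $\Phi$, producing a term $cb_{r-1}$ together with H\"older corrections. The multiplicative interaction of the $H\epsilon^\alpha$ and $Q\epsilon^\gamma$ factors through this two-layer composition is what generates the three terms in $w(\epsilon) = H\epsilon^\alpha + Q\epsilon^\gamma + HQ\epsilon^{\alpha+\gamma}$. The moments $(E[|\mathcal{V}_1^{(r-1)}|^p])^{1/p}$, $(E[\sigma_+(\boldsymbol{X}_1)^p])^{1/p}$, and $(E[(\sigma_+(\boldsymbol{X}_1)\beta(\boldsymbol{X}_1))^p])^{1/p}$ arise from the same conditional Minkowski expansion used in the root step and are finite by Assumption~\ref{A.RGmodels} together with Lemma~\ref{L.Moments}.

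\textbf{Main obstacle.} The principal bookkeeping difficulty is controlling the $L_p$ norm of sums whose length $\mathcal{N}_\mathbf{i}$ is itself random and correlated with the mark $\boldsymbol{X}_\mathbf{i}$ through the branching mechanism. This is handled by invoking Minkowski's inequality conditionally on the degree-and-mark pair and then exploiting the i.i.d.~structure of $\{(\mathcal{N}_\mathbf{i}, \boldsymbol{X}_\mathbf{i})\}$ to extract the single clean factor $(E[(\mathcal{N}_1 \sigma_-(\boldsymbol{X}_1) \sigma_+(\boldsymbol{X}_1))^p])^{1/p} = c$ at each inductive step. A secondary subtlety is choosing which of the two marks plays the role of $\mathbf{x}$ in each application of R(5) so that the prefactors $\sigma_-, \sigma_+, \beta$ emerge evaluated at the tree mark $\boldsymbol{X}$ rather than the graph mark $\mathbf{X}$, which is what allows the bound to be phrased entirely in tree-side expectations as required.
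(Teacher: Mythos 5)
Your proposal follows essentially the same approach as the paper's proof: on $F_\emptyset^{(k,\epsilon)}$, insert a hybrid intermediate ($\tilde R$ in the paper's notation, your $J$) carrying the tree mark but the graph-side recursive inputs, use Assumption~\ref{A.PhiMap}(5) oriented with $\mathbf{x}$ the tree mark for the mark-swap term and Assumption~\ref{A.PhiMap}(1)--(2) for the argument-swap terms, and propagate this down the tree via conditional Minkowski and the branching property to obtain the recursion for $b_r$. The bookkeeping you identify (conditioning on $\mathscr{H}_n$ to handle shared noises, splitting $|V_{\theta(j)}| \le |\mathcal{V}_j| + |V_{\theta(j)} - \mathcal{V}_j|$ to merge the cross term into the $(1+H\epsilon^\alpha)$ prefactor, and the two-layer $g\circ\Phi$ composition generating $w(\epsilon)$) matches the paper's argument step for step.
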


\begin{proof}
Fix $k \geq 0$, $\epsilon \in (0,1)$, and construct the strong coupling of $\mathcal{G}_I^{(k)}(\mathbf{X})$. Define for each $\mathbf{i} \in \mathcal{A}_r$, $0 \leq r \leq k$, the events:
$$F^{(k-r,\epsilon)}_\mathbf{i} = \bigcap_{s=r}^k \bigcap_{(\mathbf{i}, \mathbf{j}) \in \mathcal{A}_s} \left\{  \rho(\mathbf{X}_{\theta((\mathbf{i},\mathbf{j}))}, {\boldsymbol{\mathcal{X}}_{(\mathbf{i},\mathbf{j})}} ) \leq \epsilon \right\}.$$
Note that on the event $F_\emptyset^{(k,\epsilon)}$, we have $N_{\theta(\mathbf{i})} = \mathcal{N}_\mathbf{i}$ for all $\mathbf{i} \in \mathcal{T}^{(k)}$. Next, define the intermediate random variables for $0 \leq r < k$ and $\mathbf{i} \in \mathcal{A}_r$:
\begin{align*}
\tilde R_{\theta(\mathbf{i})}^{(k-r)} &= \Phi\left( {\boldsymbol{\mathcal{X}}_\mathbf{i}},  \zeta_{\theta(\mathbf{i})}^{(k-r-1)} , \left\{ V_{\theta(\mathbf{i})}^{(k-r-1)} , \xi_{\theta((\mathbf{i},j)), \theta(\mathbf{i})}^{(k-r-1)}: 1 \leq j \leq \mathcal{N}_\mathbf{i} \right\} \right),  
\end{align*}
where $\theta(\emptyset) = I$. 

We now define
$$\mathscr{H}_n = \sigma\left( \mathscr{G}_n \cup \sigma\left( I, \mathcal{T}({\boldsymbol{\mathcal{X}}}) \right) \right),$$
where $\mathcal{T}({\boldsymbol{\mathcal{X}}})$ is the marked Galton-Watson process whose restriction to its first $k$ generations, i.e, $\mathcal{T}^{(k)}({\boldsymbol{\mathcal{X}}})$, is coupled to $\mathcal{G}_I^{(k)}(\mathbf{X})$. Note that the event $F_\emptyset^{(k,\epsilon)}$ is measurable with respect to $\mathscr{H}_n$, and the only remaining randomness is left is the one produced by the noises. From now on, assume that we are on the event $F_\emptyset^{(k,\epsilon)}$, so $\rho(\mathbf{X}_{\theta(\mathbf{i})}, {\boldsymbol{\mathcal{X}}_\mathbf{i}}) \leq \epsilon$ for all $\mathbf{i} \in \mathcal{T}^{(k)}$. 

Next, note that by Minkowski's inequality, 
\begin{align*}
\left( E\left[ \left. \left| R_I^{(k)} - \mathcal{R}_\emptyset^{(k)} \right|^p \right| \mathscr{H}_n  \right] \right)^{1/p} &\leq \left( E\left[ \left. \left| R_I^{(k)} - \tilde R_I^{(k)} \right|^p  \right| \mathscr{H}_n \right] \right)^{1/p} + \left( E\left[  \left. \left| \tilde R_I^{(k)} - \mathcal{R}_\emptyset^{(k)} \right|^p \right| \mathscr{H}_n \right] \right)^{1/p} .
\end{align*}
Now condition on the $\{ V_j^{(k-1)}: j \in V_n\}$, and use Assumption~\ref{A.PhiMap}(5) to obtain that 
\begin{align*}
&\left( E\left[\left. \left| R_I^{(k)} - \tilde R_I^{(k)} \right|^p \right| \mathscr{H}_n \right] \right)^{1/p} \\
&\leq \left( E\left[ \left. \left( H \epsilon^\alpha \left(1 \vee \left( \sum_{j=1}^{\mathcal{N}_\emptyset} \sigma_-({\boldsymbol{\mathcal{X}}_\emptyset}) \left| V_{\theta(j)}^{(k-1)} \right| + \beta({\boldsymbol{\mathcal{X}}_\emptyset}) \right) \right)  \right)^p \right| \mathscr{H}_n  \right] \right)^{1/p} \\
&\leq H \epsilon^\alpha \left( 1 +  \left(  E\left[ \left. \left(  \sum_{j=1}^{\mathcal{N}_\emptyset} \sigma_-({\boldsymbol{\mathcal{X}}_\emptyset}) \left| V_{\theta(j)}^{(k-1)} \right|   \right)^p  \right| \mathscr{H}_n \right] \right)^{1/p} +  \beta({\boldsymbol{\mathcal{X}}_\emptyset})  \right) \\
&\leq H \epsilon^\alpha \left( 1 + \sigma_-({\boldsymbol{\mathcal{X}}_\emptyset}) \sum_{j=1}^{\mathcal{N}_\emptyset}  \left(  E \left[ \left. \left| V_{\theta(j)}^{(k-1)} \right|^p \right| \mathscr{H}_n  \right] \right)^{1/p} +  \beta({\boldsymbol{\mathcal{X}}_\emptyset}) \right) \\
&\leq H \epsilon^\alpha \left( 1 + \sigma_-({\boldsymbol{\mathcal{X}}_\emptyset}) \sum_{j=1}^{\mathcal{N}_\emptyset}  \left( \Delta_j^{(k-1)} + \mathcal{Y}_j^{(k-1)}  \right) +  \beta({\boldsymbol{\mathcal{X}}_\emptyset}) \right)
\end{align*}
where for $\mathbf{i} \in \mathcal{A}_{r}$, $1 \leq r \leq k$,
$$\Delta_\mathbf{i}^{(k-r)} =  \left(  E \left[ \left. \left| V_{\theta(\mathbf{i})}^{(k-r)} - \mathcal{V}_\mathbf{i}^{(k-r)} \right|^p \right| \mathscr{H}_n  \right] \right)^{1/p} \qquad \text{and} \qquad \mathcal{Y}_\mathbf{i}^{(k-r)} =  \left(  E \left[ \left. \left|  \mathcal{V}_\mathbf{i}^{(k-r)} \right|^p \right| \mathcal{T}({\boldsymbol{\mathcal{X}}})  \right] \right)^{1/p}.$$
{Assumption~\ref{A.PhiMap}(1)} also gives
\begin{align*}
\left( E\left[\left. \left| \tilde R_I^{(k)}- \mathcal{R}_\emptyset^{(k)}  \right|^p \right| \mathscr{H}_n \right] \right)^{1/p} 
&\leq \left( E\left[\left. \left( \sigma_-({\boldsymbol{\mathcal{X}}_\emptyset}) \sum_{j=1}^{\mathcal{N}_\emptyset} \left| V_{\theta(j)}^{(k-1)} - \mathcal{V}_j^{(k-1)} \right|  \right)^p \right| \mathscr{H}_n \right] \right)^{1/p} \\
&\leq \sigma_-({\boldsymbol{\mathcal{X}}_\emptyset}) \sum_{j=1}^{\mathcal{N}_\emptyset} \Delta_j^{(k-1)} .
\end{align*}
Hence,
\begin{align*}
&\left( E\left[ \left. \left| R_I^{(k)} - \mathcal{R}_\emptyset^{(k)} \right|^p \right| \mathscr{H}_n  \right] \right)^{1/p} \\
&\leq H \epsilon^\alpha \left( 1 + \sigma_-({\boldsymbol{\mathcal{X}}_\emptyset}) \sum_{j=1}^{\mathcal{N}_\emptyset}   \mathcal{Y}_j^{(k-1)}   +  \beta({\boldsymbol{\mathcal{X}}_\emptyset}) \right) + (1 + H\epsilon^\alpha) \sigma_-({\boldsymbol{\mathcal{X}}_\emptyset}) \sum_{j=1}^{\mathcal{N}_\emptyset} \Delta_j^{(k-1)}. 
\end{align*}

We will now derive an upper bound for $\Delta_\mathbf{i}^{(k-r)}$. By {Assumption~\ref{A.PhiMap}, items (2) and (5)} , we have that for any $1 \leq r < k$, and $\mathbf{i} \in \mathcal{A}_{r}$:
\begin{align*}
\Delta_\mathbf{i}^{(k-r)} &\leq \left(  E \left[ \left. \left| g( R_{\theta(\mathbf{i})}^{(k-r)}, \mathbf{X}_{\theta(\mathbf{i})}) - g( R_{\theta(\mathbf{i})}^{(k-r)}, {\boldsymbol{\mathcal{X}}_\mathbf{i}}) \right|^p \right| \mathscr{H}_n  \right] \right)^{1/p} \\
&\hspace{5mm} + \left(  E \left[ \left. \left| g( R_{\theta(\mathbf{i})}^{(k-r)}, {\boldsymbol{\mathcal{X}}_\mathbf{i}}) - g(\mathcal{R}_\mathbf{i}^{(k-r)}, {\boldsymbol{\mathcal{X}}_\mathbf{i}}) \right|^p \right| \mathscr{H}_n  \right] \right)^{1/p} \\
&\leq  \left(  E \left[ \left. \left( Q \epsilon^{\gamma} \left(1 \vee \sigma_+( {\boldsymbol{\mathcal{X}}_\mathbf{i}}) | R_{\theta(\mathbf{i})}^{(k-r)}|  \right)\right)^p \right| \mathscr{H}_n  \right] \right)^{1/p} \\
&\hspace{5mm} + \left(  E \left[ \left. \sigma_+( {\boldsymbol{\mathcal{X}}_\mathbf{i}})^p \left| R_{\theta(\mathbf{i})}^{(k-r)} - \mathcal{R}_\mathbf{i}^{(k-r)} \right|^p \right| \mathscr{H}_n  \right] \right)^{1/p} \\
&\leq Q\epsilon^\gamma \left( 1 + \sigma_+({\boldsymbol{\mathcal{X}}_\mathbf{i}}) \left( E\left[ \left. \left| R_{\theta(\mathbf{i})}^{(k-r)} \right|^p \right| \mathscr{H}_n \right] \right)^{1/p} \right) + \sigma_+( {\boldsymbol{\mathcal{X}}_\mathbf{i}}) \left(  E \left[ \left.  \left| R_{\theta(\mathbf{i})}^{(k-r)} - \mathcal{R}_\mathbf{i}^{(k-r)} \right|^p \right| \mathscr{H}_n  \right] \right)^{1/p} \\
&\leq Q\epsilon^\gamma \left( 1 + \sigma_+({\boldsymbol{\mathcal{X}}_\mathbf{i}}) \left( E\left[ \left. \left| \mathcal{R}_\mathbf{i}^{(k-r)} \right|^p \right| \mathcal{T}({\boldsymbol{\mathcal{X}}}) \right] \right)^{1/p} \right) \\
&\hspace{5mm} + (1+ Q\epsilon^\gamma) \sigma_+( {\boldsymbol{\mathcal{X}}_\mathbf{i}}) \left(  E \left[ \left.  \left| R_{\theta(\mathbf{i})}^{(k-r)} - \mathcal{R}_\mathbf{i}^{(k-r)} \right|^p \right| \mathscr{H}_n  \right] \right)^{1/p}.
\end{align*}
Moreover,
\begin{align*}
\left( E\left[ \left. \left| \mathcal{R}_\mathbf{i}^{(k-r)} \right|^p \right| \mathcal{T}({\boldsymbol{\mathcal{X}}}) \right] \right)^{1/p} &\leq \sigma_-({\boldsymbol{\mathcal{X}}_\mathbf{i}}) \sum_{j=1}^{\mathcal{N}_\mathbf{i}} \mathcal{Y}_{(\mathbf{i},j)}^{(k-r-1)} + \beta({\boldsymbol{\mathcal{X}}_\mathbf{i}}),
\end{align*}
and by the same arguments used above,
\begin{align*}
& \left(  E \left[ \left.  \left| R_{\theta(\mathbf{i})}^{(k-r)} - \mathcal{R}_\mathbf{i}^{(k-r)} \right|^p \right| \mathscr{H}_n  \right] \right)^{1/p} \\
&\leq H \epsilon^\alpha \left( 1 + \sigma_-({\boldsymbol{\mathcal{X}}_\mathbf{i}}) \sum_{j=1}^{\mathcal{N}_\mathbf{i}}   \mathcal{Y}_{(\mathbf{i},j)}^{(k-r-1)}   +  \beta({\boldsymbol{\mathcal{X}}_\mathbf{i}}) \right) + (1 + H\epsilon^\alpha) \sigma_-({\boldsymbol{\mathcal{X}}_\mathbf{i}}) \sum_{j=1}^{\mathcal{N}_\mathbf{i}} \Delta_{(\mathbf{i},j)}^{(k-r-1)}.
\end{align*} 
Hence,
\begin{align*}
\Delta_\mathbf{i}^{(k-r)} &\leq Q\epsilon^\gamma \left( 1 + \sigma_+({\boldsymbol{\mathcal{X}}_\mathbf{i}}) \sigma_-({\boldsymbol{\mathcal{X}}_\mathbf{i}}) \sum_{j=1}^{\mathcal{N}_\mathbf{i}} \mathcal{Y}_{(\mathbf{i},j)}^{(k-r-1)} + \sigma_+({\boldsymbol{\mathcal{X}}_\mathbf{i}}) \beta({\boldsymbol{\mathcal{X}}_\mathbf{i}}) \right) \\
&\hspace{5mm} + (1+ Q\epsilon^\gamma) H \epsilon^\alpha   \left( \sigma_+( {\boldsymbol{\mathcal{X}}_\mathbf{i}}) + \sigma_+( {\boldsymbol{\mathcal{X}}_\mathbf{i}}) \sigma_-({\boldsymbol{\mathcal{X}}_\mathbf{i}}) \sum_{j=1}^{\mathcal{N}_\mathbf{i}}   \mathcal{Y}_{(\mathbf{i},j)}^{(k-r-1)}   + \sigma_+( {\boldsymbol{\mathcal{X}}_\mathbf{i}}) \beta({\boldsymbol{\mathcal{X}}_\mathbf{i}}) \right)  \\
&\hspace{5mm} + (1+ Q\epsilon^\gamma) (1 + H\epsilon^\alpha)  \sigma_+( {\boldsymbol{\mathcal{X}}_\mathbf{i}}) \sigma_-({\boldsymbol{\mathcal{X}}_\mathbf{i}}) \sum_{j=1}^{\mathcal{N}_\mathbf{i}} \Delta_{(\mathbf{i},j)}^{(k-r-1)} \\
&\leq w(\epsilon) \mathcal{W}_\mathbf{i}^{(k-r)} + (1 + w(\epsilon))  \sigma_+( {\boldsymbol{\mathcal{X}}_\mathbf{i}}) \sigma_-({\boldsymbol{\mathcal{X}}_\mathbf{i}}) \sum_{j=1}^{\mathcal{N}_\mathbf{i}} \Delta_{(\mathbf{i},j)}^{(k-r-1)},
\end{align*}
where $w(\epsilon) = Q\epsilon^\gamma + H\epsilon^\alpha + QH \epsilon^{\alpha+\gamma}$ and
$$\mathcal{W}_\mathbf{i}^{(k-r)} = 1+ \sigma_+( {\boldsymbol{\mathcal{X}}_\mathbf{i}}) + \sigma_+( {\boldsymbol{\mathcal{X}}_\mathbf{i}}) \sigma_-({\boldsymbol{\mathcal{X}}_\mathbf{i}}) \sum_{j=1}^{\mathcal{N}_\mathbf{i}}   \mathcal{Y}_{(\mathbf{i},j)}^{(k-r-1)}   + \sigma_+({ \boldsymbol{\mathcal{X}}_\mathbf{i}}) \beta({\boldsymbol{\mathcal{X}}_\mathbf{i}}).$$

Note that for $\mathbf{i} \in \mathcal{A}_k$ we have $R_{\theta(\mathbf{i})}^{(0)} = \mathcal{R}_\mathbf{i}^{(0)}$, and therefore,
$$\Delta_\mathbf{i}^{(0)} = \left( E\left[\left. \left| g(R_{\theta(\mathbf{i})}^{(0)}, \mathbf{X}_{\theta(\mathbf{i})}) - g(\mathcal{R}_\mathbf{i}^{(0)}, {\boldsymbol{\mathcal{X}}_{\mathbf{i}}}) \right|^p  \right| \mathscr{H}_n \right] \right)^{1/p} \leq Q\epsilon^\gamma \left( 1 + \sigma_+({\boldsymbol{\mathcal{X}}_\mathbf{i}}) r_0 \right),$$
where $r_0 = \left( E[ |R_1^{(0)}|^p ] \right)^{1/p}$. 

Iterating the recursion for $\Delta_\mathbf{i}^{(k-r)}$ gives for $j \in \mathcal{A}_1$:
\begin{align*}
\Delta_j^{(k-1)} &\leq \mathcal{L}_j^{(k-1)},
\end{align*}
where $\mathcal{L}_\mathbf{i}^{(k-r)}$ for $\mathbf{i} \in \mathcal{A}_r$, $1 \leq r \leq k$ satisfies
\begin{align*}
\mathcal{L}_\mathbf{i}^{(k-r)} &= w(\epsilon) \mathcal{W}_\mathbf{i}^{(k-r)}  + (1+w(\epsilon))  \sigma_+( {\boldsymbol{\mathcal{X}}_\mathbf{i}}) \sigma_-({\boldsymbol{\mathcal{X}}_\mathbf{i}}) \sum_{j=1}^{\mathcal{N}_\mathbf{i}} \mathcal{L}_{(\mathbf{i},j)}^{(k-r-1)}, \qquad \mathcal{L}_\mathbf{i}^{(0)} = Q\epsilon^\gamma \left( 1 + \sigma_+({\boldsymbol{\mathcal{X}}_\mathbf{i}}) r_0 \right). 
\end{align*}

It only remains to compute the expectation using Minkowski's inequality and the branching property on $\mathcal{T}({\boldsymbol{\mathcal{X}}})$, which we do as follows:
\begin{align*}
&\left( \mathbb{E}_n\left[ \left| R_I^{(k)} - \mathcal{R}_\emptyset^{(k)} \right|^p 1(F_\emptyset^{(k,\epsilon)}) \right] \right)^{1/p} \\
&\leq \left( \mathbb{E}_n\left[ E\left[ \left. \left| R_I^{(k)} - \mathcal{R}_\emptyset^{(k)} \right|^p \right| \mathscr{H}_n \right]  1(F_\emptyset^{(k,\epsilon)}) \right] \right)^{1/p} \\
&\leq \left( E\left[ \left( H \epsilon^\alpha \left( 1 + \sigma_-({\boldsymbol{\mathcal{X}}_\emptyset}) \sum_{j=1}^{\mathcal{N}_\emptyset}   \mathcal{Y}_j^{(k-1)}   +  \beta({\boldsymbol{\mathcal{X}}_\emptyset}) \right) + (1 + H\epsilon^\alpha) \sigma_-({\boldsymbol{\mathcal{X}}_\emptyset}) \sum_{j=1}^{\mathcal{N}_\emptyset} \mathcal{L}_j^{(k-1)}  \right)^p  \right] \right)^{1/p} \\
&\leq H \epsilon^\alpha \left( 1 + \left( E\left[ (\sigma_-({\boldsymbol{\mathcal{X}}_\emptyset}) \mathcal{N}_\emptyset)^p \right] \right)^{1/p} \left( E\left[ (\mathcal{Y}_1^{(k-1)})^p \right] \right)^{1/p} + \left( E[\beta({\boldsymbol{\mathcal{X}}_\emptyset})^p ] \right)^{1/p} \right) \\
&\hspace{5mm} + (1+ H\epsilon^\alpha) \left( E\left[ (\sigma_-({\boldsymbol{\mathcal{X}}_\emptyset}) \mathcal{N}_\emptyset)^p \right] \right)^{1/p} \left( E\left[ (\mathcal{L}_1^{(k-1)})^p \right] \right)^{1/p},
\end{align*}
where $E\left[ (\mathcal{Y}_1^{(k-1)})^p \right]  = E\left[ \left| \mathcal{V}_1^{(k-1)} \right|^p \right] < \infty$ by Lemma~\ref{L.Moments}. Let $b_{k-r} = \left( E\left[ (\mathcal{L}_\mathbf{i}^{(k-r)})^p \right] \right)^{1/p}$ for $\mathbf{i} \in \mathcal{A}_r$, $1 \leq r \leq k$. Then, Minkowski's inequality and the branching property give again:
\begin{align*}
b_{k-r} &\leq w(\epsilon) \left( E\left[( \mathcal{W}_\mathbf{i}^{(k-r)})^p \right] \right)^{1/p} +(1+w(\epsilon)) \left( E\left[ (\sigma_+({\boldsymbol{\mathcal{X}}_1})) \sigma_-({\boldsymbol{\mathcal{X}}_1}) \mathcal{N}_1)^p \right] \right)^{1/p} b_{k-r-1} \\
&\leq w(\epsilon) \left( 1 + \left( E\left[ \sigma_+({\boldsymbol{\mathcal{X}}_1})^p \right] \right)^{1/p} + c \left( E\left[ (\mathcal{Y}_{(\mathbf{i},1)}^{(k-r-1)})^p \right] \right)^{1/p} + \left(E \left[ (\sigma_+({\boldsymbol{\mathcal{X}}_1})\beta({\boldsymbol{\mathcal{X}}_1}))^p \right] \right)^{1/p}  \right) \\
&\hspace{5mm} + (1+w(\epsilon)) c b_{k-r-1}
\end{align*}
where $c = \left( E\left[ (\sigma_+({\boldsymbol{\mathcal{X}}_1})) \sigma_-({\boldsymbol{\mathcal{X}}_1}) \mathcal{N}_1)^p \right] \right)^{1/p} $. This completes the proof.
\end{proof}

The following result provides a more explicit bound for the expectation in Theorem~\ref{T.CouplingHolds}. Note that the bound can be made uniform in $k$ when $c < 1$. 

\begin{cor} \label{C.Explicit}
Under the same assumptions as Theorem~\ref{T.CouplingHolds}, we have that
\begin{align*}
\left( \mathbb{E}_n\left[ \left| R_I^{(k)} - \mathcal{R}_\emptyset^{(k)} \right|^p 1(F_\emptyset^{(k,\epsilon)})  \right] \right)^{1/p} \leq  H_{k,\epsilon} w(\epsilon) ,
\end{align*}
where $w(\epsilon) = Q\epsilon^\gamma + H\epsilon^\alpha + QH \epsilon^{\alpha+\gamma}$,
\begin{align*}
H_{k,\epsilon} &= B \left( 1 + (1 + w(\epsilon))    \sum_{i=0}^{k-1} \sum_{j=0}^{k-1-i} c^j (1+ w(\epsilon))^i c^i \right) \qquad \text{and} \\
B &= 1  +  \left( E[\beta({\boldsymbol{\mathcal{X}}_\emptyset})^p ] \right)^{1/p} + 2\left( E\left[ (\sigma_-({\boldsymbol{\mathcal{X}}_\emptyset}) \mathcal{N}_\emptyset)^p \right] \right)^{1/p} \\
&\hspace{5mm} \times \left( 1 + \left( E\left[ \sigma_+({\boldsymbol{\mathcal{X}}_1})^p \right] \right)^{1/p}  +   \left( E\left[ \left(\sigma_+({\boldsymbol{\mathcal{X}}_1})( \beta({\boldsymbol{\mathcal{X}}_1}) \vee r_0) \right)^p \right] \right)^{1/p} + \left( E\left[ \left|g(0, {\boldsymbol{\mathcal{X}}_1}) \right|^p \right] \right)^{1/p} \right).
\end{align*}
\end{cor}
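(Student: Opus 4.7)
The plan is to unroll the one-step recursion for $b_r$ provided by Theorem~\ref{T.CouplingHolds}, substitute the moment bound for $\mathcal{V}_1^{(\cdot)}$ from Lemma~\ref{L.Moments}, plug the resulting estimate back into the top-level inequality of Theorem~\ref{T.CouplingHolds}, and finally absorb every model-dependent moment quantity into the single constant $B$. The algebraic simplification that drives the whole argument is that, for $\epsilon \in (0,1)$, one has $H\epsilon^\alpha \le w(\epsilon)$, $Q\epsilon^\gamma \le w(\epsilon)$, and $1+H\epsilon^\alpha \le 1+w(\epsilon)$. Under these uniform replacements the structure of the recursion simplifies considerably while remaining a valid upper bound.

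First, writing $\tilde c := (1+w(\epsilon))c$ and iterating the recursive bound for $b_r$ in Theorem~\ref{T.CouplingHolds}, one gets
\begin{equation*}
b_{k-1} \le w(\epsilon) \sum_{i=0}^{k-2} \tilde c^{\,i}\, U_{k-1-i} + \tilde c^{\,k-1} b_0,
\end{equation*}
where $U_r$ denotes the bracketed factor in the $b_r$-recursion, namely a constant plus $c\,(E[|\mathcal{V}_1^{(r-1)}|^p])^{1/p}$. By Lemma~\ref{L.Moments}, $(E[|\mathcal{V}_1^{(r-1)}|^p])^{1/p}$ is bounded by a constant (depending on $a_0$, $d$, and the moments in the statement) times $\sum_{j=0}^{r-1} c^j$, so each $U_r$ factors as constant times $(1 + \sum_{j=0}^{r-1} c^j)$. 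This produces the double sum $\sum_{i=0}^{k-2} \tilde c^{\,i} \sum_{j=0}^{k-2-i} c^j$. The residual term $\tilde c^{\,k-1} b_0$ is already of the form $w(\epsilon)$ times a constant, since $b_0 = Q\epsilon^\gamma(1 + r_0 (E[\sigma_+(\boldsymbol{X}_1)^p])^{1/p}) \le w(\epsilon)\cdot(\text{constant})$.

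Second, substitute the resulting bound on $b_{k-1}$ into the estimate supplied by Theorem~\ref{T.CouplingHolds}. The leading $H\epsilon^\alpha$ piece, together with a second application of Lemma~\ref{L.Moments} to $(E[|\mathcal{V}_1^{(k-1)}|^p])^{1/p}$, contributes a term of the form $w(\epsilon)\cdot(\text{constant})\cdot(1 + \sum_{l=0}^{k-1} c^l)$, which fits into either the ``$1+$'' part of $H_{k,\epsilon}$ or the $i=k-1$ slice of the double sum. The second piece, $(1+H\epsilon^\alpha)\,(E[(\sigma_-(\boldsymbol{X}_\emptyset)\mathcal{N}_\emptyset)^p])^{1/p}\, b_{k-1}$, contributes the $(1+w(\epsilon)) \sum_{i=0}^{k-1}\sum_{j=0}^{k-1-i} c^j (1+w(\epsilon))^i c^i$ portion after collecting the prefactor into $B$. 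Pulling $w(\epsilon)$ out front yields exactly the stated bound $w(\epsilon)\, H_{k,\epsilon}$.

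The main obstacle is purely bookkeeping: several quantities coming from different places (the factor $(E[(\sigma_-(\boldsymbol{X}_\emptyset)\mathcal{N}_\emptyset)^p])^{1/p}$ appears twice in Theorem~\ref{T.CouplingHolds}; the constants $a_0$ and $d$ in Lemma~\ref{L.Moments} are distinct; the $b_0$ boundary term produces its own prefactor) must all be dominated by the single constant $B$ displayed in the statement. The factor of $2$ in front of $(E[(\sigma_-(\boldsymbol{X}_\emptyset)\mathcal{N}_\emptyset)^p])^{1/p}$ and the combined expression $\sigma_+(\boldsymbol{X}_1)(\beta(\boldsymbol{X}_1)\vee r_0)$ inside $B$ are precisely what makes this simultaneous domination possible. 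Beyond that, the proof requires no further analytic input: it is a straightforward computation from Theorem~\ref{T.CouplingHolds} and Lemma~\ref{L.Moments}, and the uniform-in-$k$ statement when $c<1$ follows from summing the geometric series $\sum_{i\ge 0} (1+w(\epsilon))^i c^i$, which converges provided $\epsilon$ is small enough that $(1+w(\epsilon))c < 1$.
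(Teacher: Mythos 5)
Your proposal is correct and follows essentially the same route as the paper's own proof: unroll the $b_r$ recursion from Theorem~\ref{T.CouplingHolds} with $\tilde c = (1+w(\epsilon))c$, bound $(E[|\mathcal{V}_1^{(r)}|^p])^{1/p}$ via Lemma~\ref{L.Moments} by a constant times $\sum_{j\le r} c^j$, substitute into the top-level estimate, and absorb all moment constants into $B$ (the paper writes the combined constant as $\tilde q$, which is exactly your ``constant'' factor). Your slightly different indexing of the double sum is immaterial because the boundary term $\tilde c^{\,k-1} b_0 \le w(\epsilon)\,\tilde q\,(1+w(\epsilon))^{k-1}c^{k-1}$ is the missing $i=k-1$ slice, so the two forms coincide.
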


\begin{proof}
Define $q = \left( E\left[ \left(\sigma_+({\boldsymbol{\mathcal{X}}_1})( \beta({\boldsymbol{\mathcal{X}}_1}) \vee r_0) \right)^p \right] \right)^{1/p} + \left( E\left[ \left|g(0, {\boldsymbol{\mathcal{X}}_1}) \right|^p \right] \right)^{1/p}$ and note that by Lemma~\ref{L.Moments} we have for any $r \geq 0$,
$$\left( E\left[ \left| \mathcal{V}_1^{(r)} \right|^p \right] \right)^{1/p} \leq q \sum_{j=0}^r c^j.$$
Now use Theorem~\ref{T.CouplingHolds} to obtain that
\begin{align*}
b_r &\leq w(\epsilon) \left( 1 + \left( E\left[ \sigma_+({\boldsymbol{\mathcal{X}}_1})^p \right] \right)^{1/p} + \left(E \left[ (\sigma_+({\boldsymbol{\mathcal{X}}_1})\beta({\boldsymbol{\mathcal{X}}_1}))^p \right] \right)^{1/p} + c q \sum_{j=0}^{r-1} c^j   \right) \\
&\hspace{5mm} + (1+w(\epsilon)) c b_{r-1} \\
&\leq w(\epsilon) \tilde q \sum_{j=0}^r c^j + (1+w(\epsilon))c b_{r-1},
\end{align*}
for $\tilde q = q + 1 + \left( E\left[ \sigma_+({\boldsymbol{\mathcal{X}}_1})^p \right] \right)^{1/p} $. Iterating the recursion gives:
\begin{align*}
b_{r} &\leq w(\epsilon) \tilde q \sum_{j=0}^{r} c^j + (1+w(\epsilon))c \left( w(\epsilon) \tilde q \sum_{j=0}^{r-1} c^j + (1+w(\epsilon))c b_{r-2} \right) \\
&\leq \sum_{i=0}^{r-1} w(\epsilon) \tilde q \sum_{j=0}^{r-i} c^j (1+w(\epsilon))^i c^i + (1+w(\epsilon))^{r} c^{r} b_0 \\
&\leq \sum_{i=0}^{r-1} w(\epsilon) \tilde q \sum_{j=0}^{r-i} c^j (1+w(\epsilon))^i c^i + (1+w(\epsilon))^{r} c^{r} w(\epsilon) \tilde q \\
&= w(\epsilon) \tilde q  \sum_{i=0}^{r} \sum_{j=0}^{r-i} c^j (1+ w(\epsilon))^i c^i .
\end{align*}
To complete the proof, use Theorem~\ref{T.CouplingHolds} and Lemma~\ref{L.Moments} again to obtain that
\begin{align*}
&\left( \mathbb{E}_n\left[ \left| R_I^{(k)} - \mathcal{R}_\emptyset^{(k)} \right|^p 1(F_\emptyset^{(k,\epsilon)})  \right] \right)^{1/p} \\
&\leq  H \epsilon^\alpha \left( 1 + \left( E\left[ (\sigma_-({\boldsymbol{\mathcal{X}}_\emptyset}) \mathcal{N}_\emptyset)^p \right] \right)^{1/p} q \sum_{j=0}^{k-1} c^j + \left( E[\beta({\boldsymbol{\mathcal{X}}_\emptyset})^p ] \right)^{1/p} \right) \\
&\hspace{5mm} + (1+ H\epsilon^\alpha) \left( E\left[ (\sigma_-({\boldsymbol{\mathcal{X}}_\emptyset}) \mathcal{N}_\emptyset)^p \right] \right)^{1/p} b_{k-1} \\
&\leq w(\epsilon) \left( 1 + \left( E[\beta({\boldsymbol{\mathcal{X}}_\emptyset})^p ] \right)^{1/p}+  \left( E\left[ (\sigma_-({\boldsymbol{\mathcal{X}}_\emptyset}) \mathcal{N}_\emptyset)^p \right] \right)^{1/p} q \sum_{j=0}^{k-1} c^j  \right) \\
&\hspace{5mm} + (1 + w(\epsilon)) \left( E\left[ (\sigma_-({\boldsymbol{\mathcal{X}}_\emptyset}) \mathcal{N}_\emptyset)^p \right] \right)^{1/p} w(\epsilon) \tilde q  \sum_{i=0}^{k-1} \sum_{j=0}^{k-1-i} c^j (1+ w(\epsilon))^i c^i \\
&\leq  w(\epsilon) \left( 1 + \left( E[\beta({\boldsymbol{\mathcal{X}}_\emptyset})^p ] \right)^{1/p}+ 2 \left( E\left[ (\sigma_-({\boldsymbol{\mathcal{X}}_\emptyset}) \mathcal{N}_\emptyset)^p \right] \right)^{1/p} \tilde q (1 + w(\epsilon))    \sum_{i=0}^{k-1} \sum_{j=0}^{k-1-i} c^j (1+ w(\epsilon))^i c^i  \right) \\
&\leq B w(\epsilon) \left( 1 + (1 + w(\epsilon))    \sum_{i=0}^{k-1} \sum_{j=0}^{k-1-i} c^j (1+ w(\epsilon))^i c^i \right) ,
\end{align*}
with $B = 1  +  \left( E[\beta({\boldsymbol{\mathcal{X}}_\emptyset})^p ] \right)^{1/p} + 2\left( E\left[ (\sigma_-({\boldsymbol{\mathcal{X}}_\emptyset}) \mathcal{N}_\emptyset)^p \right] \right)^{1/p} \tilde q$. 
\end{proof}

Before we move on to the proof of the first part of Theorem~\ref{T.Main}, we first give two technical lemmas.

\begin{lemma} \label{L.CondDp}
Suppose $Z$ and $Z^{(n)} \geq 0$ are such that $d_p(\gamma_n,\gamma)\xrightarrow{P} 0$ where $\gamma_n(\cdot) = \mathbb{P}_n(Z^{(n)}\in\cdot\,)$ and $\gamma (\cdot) = P(Z\in\cdot\,)$ as $n \to \infty$. Then, for any event $\mathcal{E}_n$ constructed on the same probability space as $Z^{(n)}$ and such that $\mathbb{P}_n( \mathcal{E}_n) \xrightarrow{P} 0$ as $n \to \infty$, we have 
$$\mathbb{E}_n\left[ Z^{(n)} 1(\mathcal{E}_n) \right] \xrightarrow{P} 0, \qquad n \to \infty.$$
\end{lemma}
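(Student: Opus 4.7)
The plan is to bound $\mathbb{E}_n[Z^{(n)} 1(\mathcal{E}_n)]$ by splitting $Z^{(n)}$ at a truncation level $M > 0$: the event $\mathcal{E}_n$ has vanishing conditional probability, and the family $\{Z^{(n)}\}$ is asymptotically uniformly integrable by virtue of $d_1$-convergence, so each piece can be controlled. Concretely, for any $M > 0$,
\[
\mathbb{E}_n\!\left[Z^{(n)} 1(\mathcal{E}_n)\right] \le M\, \mathbb{P}_n(\mathcal{E}_n) + \mathbb{E}_n\!\left[Z^{(n)} 1(Z^{(n)} > M)\right],
\]
and for any fixed $M$ the first summand converges to $0$ in probability by assumption. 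The task therefore reduces to making the tail term uniformly small (in probability) as $M$ grows.

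For the tail, I would use the quantile representation of the conditional law $\nu_n$ of $Z^{(n)}$ and of the law $\nu$ of $Z$. Writing $F_n^{-1}$ and $F^{-1}$ for their generalized inverses,
\[
\mathbb{E}_n\!\left[Z^{(n)} 1(Z^{(n)} > M)\right] = \int_0^1 F_n^{-1}(u)\, 1(F_n^{-1}(u) > M)\, du,
\]
while the explicit formula for $d_1$ in the paper gives $d_1(\nu_n, \nu) = \int_0^1 |F_n^{-1}(u) - F^{-1}(u)|\, du$. Bounding $F_n^{-1}(u) \le F^{-1}(u) + |F_n^{-1}(u) - F^{-1}(u)|$ and splitting the residual integral according to whether $F^{-1}(u) > M/2$ (where it is absorbed into $E[Z\,1(Z > M/2)]$) or $F^{-1}(u) \le M/2$ (where $F_n^{-1}(u) > M$ forces $|F_n^{-1}(u) - F^{-1}(u)| > M/2$, so that $F^{-1}(u)\,1(F_n^{-1}(u) > M) \le |F_n^{-1}(u) - F^{-1}(u)|$), I expect to obtain
\[
\mathbb{E}_n\!\left[Z^{(n)} 1(Z^{(n)} > M)\right] \le 2\, d_1(\nu_n, \nu) + E[Z\, 1(Z > M/2)].
\]

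Putting the pieces together,
\[
\mathbb{E}_n\!\left[Z^{(n)} 1(\mathcal{E}_n)\right] \le M\, \mathbb{P}_n(\mathcal{E}_n) + 2\, d_1(\nu_n, \nu) + E[Z\, 1(Z > M/2)].
\]
Since $d_1$-convergence forces $E[Z] < \infty$, for any $\delta > 0$ I would first pick $M$ large enough that $E[Z\,1(Z > M/2)] < \delta/3$, and then invoke $\mathbb{P}_n(\mathcal{E}_n) \xrightarrow{P} 0$ and $d_1(\nu_n, \nu) \xrightarrow{P} 0$ with this fixed $M$ to conclude. The main obstacle is the tail step, namely transferring uniform integrability from the limit $Z$ to the prelimit $Z^{(n)}$ within the conditional framework; the quantile route is the cleanest, and the delicate point is the region $\{F_n^{-1}(u) > M,\, F^{-1}(u) \le M/2\}$, where the contribution must be absorbed into $d_1(\nu_n, \nu)$ rather than into the tail of $Z$.
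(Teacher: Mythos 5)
Your argument is correct, and it follows a genuinely different route for the tail-control step. Both proofs begin with the same truncation $\mathbb{E}_n[Z^{(n)} 1(\mathcal{E}_n)] \le M\,\mathbb{P}_n(\mathcal{E}_n) + \mathbb{E}_n[Z^{(n)}1(Z^{(n)}>M)]$, but they diverge after that. The paper writes the tail term as $\mathbb{E}_n[Z^{(n)}] - \mathbb{E}_n[Z^{(n)}1(Z^{(n)}\le M)]$ and appeals to an external result (Lemma~A.2 of Bollob\'as--Janson--Riordan) together with the fact that $z \mapsto z\,1(z\le M)$ is bounded and a.e.~continuous, which forces a side remark about $M$ being a continuity point of the law of $Z$. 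You instead work directly with the quantile representation of $d_1$ and produce the quantitative bound $\mathbb{E}_n[Z^{(n)}1(Z^{(n)}>M)] \le 2\,d_1(\nu_n,\nu) + E[Z\,1(Z>M/2)]$; the case split on $\{F^{-1}(u) > M/2\}$ versus $\{F^{-1}(u) \le M/2,\ F_n^{-1}(u) > M\}$ is exactly what makes the defect absorb cleanly into $d_1$, and you verify it correctly. This route is self-contained (no citation needed), gives an explicit rate, and sidesteps the continuity-point issue entirely; the paper's route is shorter if one is willing to invoke the cited lemma. Both are valid proofs of the lemma.
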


\begin{proof}
To start, fix $M>0$ and note that
\begin{align*}
    \mathbb{E}_n\left[ Z^{(n)} 1(\mathcal{E}_n)\right] &\leq \mathbb{E}_n\left[Z^{(n)}1(\mathcal{E}_n)1(Z^{(n)} \leq M)\right] + \mathbb{E}_n\left[Z^{(n)}1(Z^{(n)} >M))\right]\\
    & \leq M\mathbb{P}_n(\mathcal{E}_n) + \mathbb{E}_n\left[Z^{(n)}1(Z^{(n)}>M) \right]. 
\end{align*}
Since $\mathbb{P}_n( \mathcal{E}_n) \xrightarrow{P} 0$ as $n \to \infty$, we need to show that $\mathbb{E}_n\left[Z^{(n)} 1(Z^{(n)}>M)\right]$ can be made arbitrarily small. To this end, note that
\begin{align*}
    \mathbb{E}_n\left[ Z^{(n)} 1(Z^{(n)} >M)\right] & = \mathbb{E}_n\left[Z^{(n)} \right] - \mathbb{E}_n\left[ Z^{(n)} 1(Z^{(n)}\leq M )\right] ,
\end{align*}
where by Lemma A.2 in \cite{Boll_Jan_Rio_07} and the observation that $z1(z \leq M)$ is bounded and continuous a.e., we obtain that 
\begin{align*}
    \mathbb{E}_n\left[Z^{(n)}1(Z^{(n)} \leq M)\right] \xrightarrow{P}E\left[Z 1(Z \leq M)\right], \qquad n \to \infty
\end{align*}
Therefore, we have that provided $M$ is a point of continuity, 
\begin{align*}
    \mathbb{E}_n\left[Z^{(n)} 1(Z^{(n)}>M)\right] \xrightarrow{P} E\left[Z \right] - E\left[Z1(Z \leq M)\right] = E\left[ Z 1(Z > M) \right] 
\end{align*}
as $n \to \infty$. Since $M$ is arbitrary, take $M \to \infty$ to complete the proof.
\end{proof}

\begin{lemma} \label{L.DecoupledEvents}
Suppose $\Phi$ satisfies Assumption~\ref{A.PhiMap} with part (4)(i) and the graph sequence $\boldsymbol{G}$ satisfies Assumption~\ref{A.RGmodels}. Fix $\epsilon > 0$ and define $J_I^{(k)} = 1 - 1(F_\emptyset^{(k,\epsilon)})$, where the event $F_\emptyset^{(k,\epsilon)}$ is defined in Theorem~\ref{T.CouplingHolds}. 
Then, as $n \to \infty$,
$$\left(  \mathbb{E}_n\left[ \left(  J_I^{(k)} \sigma_-(\mathbf{X}_I) \sum_{j \to I} \sigma_+(\mathbf{X}_j)   \right)^p  \right] \right)^{1/p} +   \left(  \mathbb{E}_n\left[ \left(  J_I^{(k)} \sigma_-(\mathbf{X}_I) \sum_{j \to I} |g(0,\mathbf{X}_j)|   \right)^p  \right] \right)^{1/p} \xrightarrow{P} 0.$$
\end{lemma}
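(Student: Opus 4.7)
The two expectations are treated symmetrically; I focus on the first, setting
\[
Z_I^{(n)} := \sigma_-(\mathbf{X}_I) \sum_{j \to I} \sigma_+(\mathbf{X}_j), \qquad \mathcal{E}_n := (F_\emptyset^{(k,\epsilon)})^c,
\]
so that $J_I^{(k)} = 1(\mathcal{E}_n)$ and the first expectation is $\mathbb{E}_n[(Z_I^{(n)})^p 1(\mathcal{E}_n)]^{1/p}$. The second expectation is handled identically after the substitutions $\sigma_+ \mapsto |g(0,\cdot)|$, $C \mapsto C^{(0)}$, and $K \mapsto K_0$. The plan is to invoke Lemma~\ref{L.CondDp} with $Y^{(n)} := (Z_I^{(n)})^p$: once this gives $\mathbb{E}_n[Y^{(n)} 1(\mathcal{E}_n)] \xrightarrow{P} 0$, taking $1/p$-th powers produces the stated limit.

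To apply Lemma~\ref{L.CondDp} I must verify (a) $\mathbb{P}_n(\mathcal{E}_n) \xrightarrow{P} 0$ and (b) $Y^{(n)} \xrightarrow{d_1} Y_*$ for some integrable $Y_*$. Claim (a) is a direct union bound from Definition~\ref{D.StrongCoupling} over the isomorphism-failure event $\{\mathcal{G}_I^{(k)} \not\simeq \mathcal{T}^{(k)}\}$ and the mark-mismatch event. For (b), the natural candidate is
\[
Y_* := \left( \sigma_-(\boldsymbol{X}_\emptyset) \sum_{j=1}^{\mathcal{N}_\emptyset} \sigma_+(\boldsymbol{X}_j) \right)^p,
\]
whose integrability follows from Minkowski's inequality (conditional on $(\mathcal{N}_\emptyset, \boldsymbol{X}_\emptyset)$, using the conditional i.i.d.\ structure of the $\boldsymbol{X}_j$'s) together with the finiteness of $E[(\mathcal{N}_\emptyset \sigma_-(\boldsymbol{X}_\emptyset))^p]$ and $E[\sigma_+(\boldsymbol{X}_1)^p]$ granted by Assumption~\ref{A.RGmodels}. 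Weak convergence $Y^{(n)} \xrightarrow{d} Y_*$ conditional on $\mathscr{F}_n$ in $P$-probability follows from the depth-$1$ strong coupling: on $F_\emptyset^{(1, \epsilon')}$ one has $D_I^- = \mathcal{N}_\emptyset$, $\rho(\mathbf{X}_I, \boldsymbol{X}_\emptyset) \leq \epsilon'$, and $\rho(\mathbf{X}_{\theta(j)}, \boldsymbol{X}_j) \leq \epsilon'$ for every $j \in \mathcal{A}_1$, and continuity of $\sigma_\pm$ then forces $Z_I^{(n)}$ to be close to $\sigma_-(\boldsymbol{X}_\emptyset) \sum_{j=1}^{\mathcal{N}_\emptyset} \sigma_+(\boldsymbol{X}_j)$ after letting $n \to \infty$ and then $\epsilon' \to 0$.

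The step I expect to be the main obstacle is upgrading this weak convergence to the first-moment convergence $\mathbb{E}_n[Y^{(n)}] \xrightarrow{P} E[Y_*]$ required for $d_1$-convergence. The uniform upper bound $\mathbf{E}_n[Y^{(n)}] = n^{-1}\|C \mathbf{1}\|_p^p \leq K^p$ comes directly from Assumption~\ref{A.PhiMap}(4)(i). To match the lower bound and secure uniform integrability, I would truncate $\sigma_+$ at level $M$ via the decomposition
\[
Z_I^{(n)} \leq M f_*(\mathbf{X}_I) + \sigma_-(\mathbf{X}_I) \sum_{j \to I}(\sigma_+(\mathbf{X}_j) - M)^+.
\]
The bounded summand contributes at most $(2M)^p \mathbb{E}_n[f_*(\mathbf{X}_I)^p 1(\mathcal{E}_n)]$ to the quantity of interest, which vanishes for each fixed $M$ by Lemma~\ref{L.CondDp} applied to $f_*(\mathbf{X}_I)^p$, whose $d_p$-convergence is granted by Assumption~\ref{A.RGmodels}. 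The tail summand satisfies the uniform operator bound $\mathbf{E}_n[(\cdot)^p] \leq K^p$ from the inequality applied to the truncated-entry matrix; the delicate remaining task is to show that its contribution vanishes as $M \to \infty$, which I would do by combining the $L^p$-operator inequality with the local weak convergence of the in-neighborhood of $I$ to the root of $\mathcal{T}$ and the integrability $E[\sigma_+(\boldsymbol{X}_1)^p] < \infty$. Once this tail estimate is in place, Lemma~\ref{L.CondDp} delivers $\mathbb{E}_n[Y^{(n)} 1(\mathcal{E}_n)] \xrightarrow{P} 0$, and raising to the $1/p$-th power completes the proof.
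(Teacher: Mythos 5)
Your overall map through Lemma~\ref{L.CondDp} is a detour: what you would need to verify, namely $\mathbb{E}_n\bigl[(Z_I^{(n)})^p\bigr] \xrightarrow{P} E[Y_*]$, is \emph{not} granted by Assumption~\ref{A.RGmodels} (which only provides $d_p$-convergence of $f_*(\mathbf{X}_I)$ and $\beta(\mathbf{X}_I)$, not of $\sigma_-(\mathbf{X}_I)\sum_{j\to I}\sigma_+(\mathbf{X}_j)$), so you would be forced into the same truncation work anyway; you might as well bound the quantity of interest directly, which is what the paper does and what you effectively switch to midway.

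The real gap is in your tail estimate. You write the tail as $\sigma_-(\mathbf{X}_I)\sum_{j\to I}(\sigma_+(\mathbf{X}_j)-M)^+$, which corresponds to a ``truncated-entry matrix'' $C_M$ applied to the all-ones vector $\mathbf{e}$, and then invoke the operator bound $\|C_M\|_p \le K$. But $n^{-1}\mathbb{E}_n[\|C_M\mathbf{e}\|_p^p] \le K^p$ is a \emph{uniform} bound, not a vanishing one, and there is no assumption letting you push $\|C_M\|_p \to 0$ as $M\to\infty$. The paper's resolution keeps the \emph{original} matrix $C$ (whose norm is merely $\le K$) and instead identifies the tail with $(C\,\mathbf{y}_M)_I$, where $\mathbf{y}_M$ is the $\{0,1\}$-vector with $(\mathbf{y}_M)_i = 1(\sigma_+(\mathbf{X}_i)>M)$. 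Then
$$\frac{1}{n}\mathbb{E}_n\bigl[\|C\,\mathbf{y}_M\|_p^p\bigr] \le K^p\,\frac{1}{n}\|\mathbf{y}_M\|_p^p = K^p\,\mathbb{P}_n\bigl(\sigma_+(\mathbf{X}_I)>M\bigr) \xrightarrow{P} K^p\,P\bigl(\sigma_+(\boldsymbol{X}_\emptyset)>M\bigr) < K^p\delta,$$
using $\mathbf{X}_I \xrightarrow{d_1}\boldsymbol{X}_\emptyset$ from the strong coupling. In other words, the trick is to make the \emph{vector} small in $L^p$, not the matrix; your factorization makes neither small. Once this is fixed, the bounded part (handled via Lemma~\ref{L.CondDp} applied to $f_*(\mathbf{X}_I)^p$, which does have the needed $d_1$-convergence by Assumption~\ref{A.RGmodels}) is exactly as in the paper, and the $|g(0,\cdot)|$ term follows by the same argument with $C^{(0)}$ and $K_0$.
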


\begin{proof}
Fix $\delta > 0$ and choose $M = M(\delta) > 0$ such that $P(\sigma_+(\boldsymbol{\mathcal{X}}_\emptyset) > M) \leq \delta$. Now note that if we let $\mathbf{y}_M$ denote the vector whose $i$th component is $1(\sigma_+(\mathbf{X}_i) > M)$, then
\begin{align*}
&\left(  \mathbb{E}_n\left[ \left(  J_I^{(k)} \sigma_-(\mathbf{X}_I) \sum_{j \to I} \sigma_+(\mathbf{X}_j)   \right)^p  \right] \right)^{1/p} \\
&\leq \left(  \mathbb{E}_n\left[ \left(   \sigma_-(\mathbf{X}_I) \sum_{j \to I} \sigma_+(\mathbf{X}_j) 1\left( \sigma_+(\mathbf{X}_j) > M \right)  \right)^p  \right] \right)^{1/p} + M  \left(  \mathbb{E}_n\left[ \left(  J_I^{(k)} \sigma_-(\mathbf{X}_I) D_I^-  \right)^p  \right] \right)^{1/p} \\
&= \left( \frac{1}{n} \mathbb{E}_n\left[ \left\| C \, \mathbf{y}_M \right\|_p^p \right] \right)^{1/p} + M  \left(  \mathbb{E}_n\left[ \left(  J_I^{(k)} f_*(\mathbf{X}_I)   \right)^p  \right] \right)^{1/p} . 
\end{align*}
Note that by Assumption~\ref{A.RGmodels} and Lemma~\ref{L.CondDp} we have that
$$ \mathbb{E}_n\left[ \left(  J_I^{(k)} f_*(\mathbf{X}_I)   \right)^p  \right] \xrightarrow{P} 0, \qquad n \to \infty.$$
And by Assumption~\ref{A.PhiMap}(4)(i) and the observation that $\mathbb{E}_n[ \rho(\mathbf{X}_I, {\boldsymbol{\mathcal{X}}_\emptyset})] \xrightarrow{P} 0$ as $n \to \infty$, we have
\begin{align*}
\frac{1}{n} \mathbb{E}_n\left[ \left\| C \, \mathbf{y}_M \right\|_p^p \right] &\leq \frac{1}{n} \mathbb{E}_n\left[ \left\| C \right\|_p^p \left\| \mathbf{y}_M \right\|_p^p \right] \leq K^p \mathbb{P}_n\left( \sigma_+(\mathbf{X}_I) > M \right) \\
&\xrightarrow{P} K^p P(\sigma_+({\boldsymbol{\mathcal{X}}_\emptyset}) > M) < K^p \delta.
\end{align*}
Letting $\delta \downarrow 0$ gives the result. The proof for the expectation involving $|g(0,\mathbf{X}_j)|$ is essentially the same, and is therefore omitted. 
\end{proof}

We are now ready to prove {Part (A) of} Theorem~\ref{T.Main}.

\begin{theorem} \label{T.MainPart1}
Suppose the map $\Phi$ satisfies Assumption~\ref{A.PhiMap}, the directed graph sequence $\boldsymbol{G} = \{ {G(V_n,E_n, \mathscr{A}_n)}: n \geq 1\}$ satisfies Assumption~\ref{A.RGmodels}. Let $\mu_0(\cdot) = P(R_1^{(0)} \in \cdot)$ satisfy $E\left[ |R^{(0)}_1|^p \right] < \infty$ if Assumption~\ref{A.PhiMap}(4)(i) holds, or $\text{supp}(\mu_0) \subseteq [-K, K]$ if Assumption~\ref{A.PhiMap}(4)(ii) holds. Let $\mu_{k,n}(\cdot) = \mathbb{P}_n\left( R_I^{(k)} \in \cdot \right)$, where $I$ is uniformly distributed in $\{1, 2, \dots, n\}$,  independent of $\mathscr{G}_n$. 
Then, for any fixed $k \geq 0$ there exists a sequence of random variables $\{ \mathcal{R}_\emptyset^{(0)}, \mathcal{R}_\emptyset^{(1)}, \dots, \mathcal{R}_\emptyset^{(k)} \}$ {constructed on the coupled marked Galton-Watson tree $\mathcal{T}_{\emptyset(I)}^{(k)}(\boldsymbol{\mathcal{X}})$ from Assumption~\ref{A.RGmodels}, on the same probability space as} $\{ R_I^{(0)}, R_I^{(1)}, \dots, R_I^{(k)} \}$, such that
$$\max_{0 \leq r \leq k} \mathbb{E}_n\left[ \left| R_I^{(r)} - \mathcal{R}_\emptyset^{(r)} \right|^p \right] \xrightarrow{P} 0, \qquad n \to \infty.$$
\end{theorem}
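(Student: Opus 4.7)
The plan is to combine Corollary~\ref{C.Explicit} for the good coupling event with Lemmas~\ref{L.CondDp} and~\ref{L.DecoupledEvents} for its complement, then send $n \to \infty$ followed by $\epsilon \to 0$. The trajectory $\{\mathcal{R}_\emptyset^{(r)}: 0 \leq r \leq k\}$ is built on the common probability space via the construction preceding Lemma~\ref{L.Moments}, using a single strong coupling of $\mathcal{G}_I^{(k)}(\mathbf{X})$ with the depth-$k$ marked tree $\mathcal{T}^{(k)}(\boldsymbol{X})$ and the noise sequences routed through $\theta$; for each $r<k$, the same construction applied to the depth-$r$ subtree using the noises $\zeta^{(r-1)}, \ldots, \zeta^{(0)}$ produces $\mathcal{R}_\emptyset^{(r)}$. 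For fixed $r\leq k$ and $\epsilon \in (0,1)$, I split
$$\mathbb{E}_n\bigl[|R_I^{(r)} - \mathcal{R}_\emptyset^{(r)}|^p\bigr] = \mathbb{E}_n\bigl[|R_I^{(r)} - \mathcal{R}_\emptyset^{(r)}|^p 1(F_\emptyset^{(r,\epsilon)})\bigr] + \mathbb{E}_n\bigl[|R_I^{(r)} - \mathcal{R}_\emptyset^{(r)}|^p 1(J_I^{(r)})\bigr],$$
with $J_I^{(r)} = 1 - 1(F_\emptyset^{(r,\epsilon)})$. Corollary~\ref{C.Explicit} bounds the first summand deterministically by $(H_{r,\epsilon} w(\epsilon))^p$, which vanishes as $\epsilon \to 0$ since $w(\epsilon) \to 0$ and $H_{r,\epsilon}$ stays bounded for fixed $r$.

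For the bad-event summand, Minkowski's inequality reduces the task to showing that both $\mathbb{E}_n[|\mathcal{R}_\emptyset^{(r)}|^p 1(J_I^{(r)})]$ and $\mathbb{E}_n[|R_I^{(r)}|^p 1(J_I^{(r)})]$ vanish in probability. The tree-side term follows from Lemma~\ref{L.CondDp} applied to $Z^{(n)} = |\mathcal{R}_\emptyset^{(r)}|^p$: this sequence has an $n$-independent law with finite mean by Lemma~\ref{L.Moments}, and $\mathbb{P}_n(J_I^{(r)}=1) \xrightarrow{P} 0$ by Definition~\ref{D.StrongCoupling}. Under Assumption~\ref{A.PhiMap}(4)(ii), iterating the one-step bound gives $|R_I^{(r)}|, |\mathcal{R}_\emptyset^{(r)}| \leq K$ a.s., so the whole bad-event term collapses to $(2K)^p \mathbb{P}_n(J_I^{(r)}=1) \xrightarrow{P} 0$. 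Under Assumption~\ref{A.PhiMap}(4)(i), I iterate Assumption~\ref{A.PhiMap}(3) with the Lipschitz bound of Assumption~\ref{A.PhiMap}(2) and take conditional $L^p$-norms to obtain that $u_i^{(s)} := (\mathbf{E}_n[|R_i^{(s)}|^p])^{1/p}$ satisfies the entrywise linear inequality $\mathbf{u}^{(s)} \leq C\,\mathbf{u}^{(s-1)} + C^{(0)}\boldsymbol{1} + \boldsymbol{\beta}$, where $\boldsymbol{1}$ is the vector of ones and $\boldsymbol{\beta}_i = \beta(\mathbf{X}_i)$. The operator-norm hypotheses $\|C\|_p \leq K$, $\|C^{(0)}\|_p \leq K_0$, together with $\mathbb{E}_n[\beta(\mathbf{X}_I)^p]$ being bounded in probability by Assumption~\ref{A.RGmodels}, give uniform-in-$n$ control of $(\mathbb{E}_n[(u_I^{(r)})^p])^{1/p}$. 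Combining this with local weak convergence of $u_I^{(r)}$ to its tree analog (which is continuous in the marks by the continuity of $\sigma_-, \sigma_+, \beta$) yields uniform integrability, and Lemma~\ref{L.CondDp} forces $\mathbb{E}_n[(u_I^{(r)})^p 1(J_I^{(r)})] \xrightarrow{P} 0$; the tower property transfers this to $\mathbb{E}_n[|R_I^{(r)}|^p 1(J_I^{(r)})]$.

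Letting $n \to \infty$ first and $\epsilon \to 0$ second gives $\mathbb{E}_n[|R_I^{(r)} - \mathcal{R}_\emptyset^{(r)}|^p] \xrightarrow{P} 0$ for each fixed $r \in \{0, \ldots, k\}$, and the maximum over this finite set therefore also vanishes. The main obstacle is the graph-side analysis under case (4)(i): upgrading a uniform $L^1(\mathbb{P}_n)$ bound on $(u_I^{(r)})^p$ to uniform integrability requires checking distributional convergence of the functional $u_I^{(r)}$ along the graph sequence, which is delicate because $u_I^{(r)}$ involves sums and products of the local marks. Lemma~\ref{L.DecoupledEvents} is the essential technical input here: it handles the $r=1$ case directly via the explicit row-sum bounds, and in an alternative induction-on-$r$ route it controls the outermost $\sigma_-(\mathbf{X}_I)$-weighted factor that appears whenever the in-neighbors of $I$ are decoupled from the rest of the graph by a failure of the strong coupling.
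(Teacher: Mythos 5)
Your decomposition into a good event handled by Corollary~\ref{C.Explicit} and a bad event $J_I^{(r)}$ is the right starting point, and your handling of the tree-side term $\mathbb{E}_n[|\mathcal{R}_\emptyset^{(r)}|^p J_I^{(r)}]$ and the entire (4)(ii) case matches the paper's argument. But there is a genuine gap in your treatment of the graph-side bad term $\mathbb{E}_n[|R_I^{(r)}|^p J_I^{(r)}]$ under Assumption~\ref{A.PhiMap}(4)(i). Your main route asks Lemma~\ref{L.CondDp} to apply to $Z^{(n)} = (u_I^{(r)})^p$ with $u_i^{(r)} = (\mathbf{E}_n[|R_i^{(r)}|^p])^{1/p}$, which requires $(u_I^{(r)})^p \xrightarrow{d_1} Z$ for some limit $Z$. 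That convergence is not granted by anything you have: the strong coupling of Definition~\ref{D.StrongCoupling} supplies only high-probability closeness of individual marks, and Assumption~\ref{A.RGmodels} adds $d_p$-convergence of the two specific scalar functionals $f_*(\mathbf{X}_I)$ and $\beta(\mathbf{X}_I)$. The quantity $u_I^{(r)}$, by contrast, is built from $r$-fold compositions of $\sigma_-$, $\sigma_+$, $|g(0,\cdot)|$, $\beta$ summed over the depth-$r$ in-neighborhood and is unbounded; establishing its $d_1$ convergence is at least as hard as the theorem you are proving, and in fact is circular. Invoking ``continuity in the marks'' does not rescue it, because continuity gives convergence of bounded continuous test functions, whereas $d_1$ convergence additionally requires convergence of first moments, which is exactly what is missing.

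The paper avoids this by never isolating $\mathbb{E}_n[|R_I^{(r)}|^p J_I^{(r)}]$ at all. Instead it introduces the intermediate vector $\tilde{\mathbf R}^{(k)}$ (one step of the graph dynamics started from $\boldsymbol{\mathcal R}^{(k-1)}$), splits $\mathcal J^{(k)}(\mathbf R^{(k)} - \boldsymbol{\mathcal R}^{(k)})$ into three pieces, and uses $\|C\|_p \leq K$ to dominate the piece $\mathcal J^{(k)}(\mathbf R^{(k)} - \tilde{\mathbf R}^{(k)})$ by $K\,a_{k-1,n}$. This yields the recursion $a_{k,n} \leq K\,a_{k-1,n} + b_{k,n}(\epsilon)$ in which every summand of $b_{k,n}(\epsilon)$ is already one of the objects your lemmas control: $H_{k,\epsilon} w(\epsilon)$, a truncation term $K\epsilon$, $\mathbb{E}_n[J_I^{(k)}|\mathcal R_\emptyset^{(k)}|^p]$, $\mathbb{E}_n[J_I^{(k)}\beta(\mathbf X_I)^p]$, and the two row-sum expressions from Lemma~\ref{L.DecoupledEvents}. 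Iterating the recursion $k$ times (with $a_{0,n}=0$ and $k$ fixed) gives $a_{k,n} \leq \max_r b_{r,n}(\epsilon)\sum_{r<k} K^r$, and then $n\to\infty$, $\epsilon\downarrow 0$ finishes. Your closing sentence hints at exactly this ``alternative induction-on-$r$ route,'' and it is in fact the only route; the version using $d_1$-convergence of $(u_I^{(r)})^p$ should be discarded. To repair the proposal, carry out the inductive step explicitly: condition on $\mathbf R^{(k-1)}$ and $\boldsymbol{\mathcal R}^{(k-1)}$, apply Assumption~\ref{A.PhiMap}(1)--(3), split off the operator-norm factor $\|C\|_p \leq K$, and show the leftover terms all land on quantities covered by Lemmas~\ref{L.CondDp}, \ref{L.DecoupledEvents}, \ref{L.Moments} and Corollary~\ref{C.Explicit}. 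Also note that the $M$-truncation in the paper is needed to make Lemma~\ref{L.DecoupledEvents} applicable to the term $\mathcal J^{(k)}C|\boldsymbol{\mathcal R}^{(k-1)}|$, and you should include that step.
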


\begin{proof}
For a given $I$, the random variables $\{ \mathcal{R}_\emptyset^{(0)}, \mathcal{R}_\emptyset^{(1)}, \dots, \mathcal{R}_\emptyset^{(k)} \}$ are the ones constructed at the beginning of Section~\ref{S.Proofs}, Note that since $k$ is fixed, it suffices to show that 
$$ \mathbb{E}_n\left[ \left| R_I^{(k)} - \mathcal{R}_\emptyset^{(k)} \right|^p \right] \xrightarrow{P} 0, \qquad n \to \infty.$$

Next, for any $j \in V_n$, define $\emptyset(j)$ to be the root of the coupled tree when $I = j$, and note that we can construct all $n$ couplings of $\mathcal{G}_j^{(k)}$ simultaneously. To simplify the notation, let $J_j^{(r)} = 1 - 1(F_{\emptyset(j)}^{(r,\epsilon)})$, $r \geq 0$, where the event $F_{\emptyset}^{(r,\epsilon)}$ is defined in Theorem~\ref{T.CouplingHolds}, and note that all the $\{ J_i^{(r)} : i \in V_n\}$ are measurable with respect to $\mathscr{G}_n$. Let $\mathbb{I}$ be the identity matrix in $\mathbb{R}^n$, let $\mathcal{J}^{(r)} = \text{diag}( \mathbf{J}^{(r)})$ be the diagonal matrix associated to the vector $\mathbf{J}^{(r)} = (J_1^{(r)}, \dots, J_n^{(r)})'$, and let $\boldsymbol{\mathcal{R}}^{(r)} = (\mathcal{R}_{\emptyset(1)}^{(r)}, \dots, \mathcal{R}_{\emptyset(n)}^{(r)})'$. Then, 
\begin{align*}
\left( \mathbb{E}_n\left[ \left| R_I^{(k)} - \mathcal{R}^{(k)}_\emptyset \right|^p  \right] \right)^{1/p} &\leq \left( \frac{1}{n} \mathbb{E}_n\left[ \left\| (\mathbb{I} - \mathcal{J}^{(k)}) (\mathbf{R}^{(k)} - \boldsymbol{\mathcal{R}}^{(k)}) \right\|_p^p  \right] \right)^{1/p} \\
&\hspace{5mm} + \left( \frac{1}{n} \mathbb{E}_n\left[ \left\|  \mathcal{J}^{(k)} (\mathbf{R}^{(k)}  - \boldsymbol{\mathcal{R}}^{(k)}) \right\|_p^p  \right] \right)^{1/p} .
\end{align*}
Note that if Assumption~\ref{A.PhiMap}(4)(ii) holds, then 
\begin{align*}
\left( \mathbb{E}_n\left[ \left| R_I^{(k)} - \mathcal{R}^{(k)}_\emptyset \right|^p  \right] \right)^{1/p} &\leq \left(  \mathbb{E}_n\left[ \left|  R_I^{(k)} - \mathcal{R}_\emptyset^{(k)} \right|^p 1(F_\emptyset^{(k,\epsilon)})  \right] \right)^{1/p} + 2K \left( \mathbb{E}_n\left[   J_I^{(k)}   \right] \right)^{1/p},
\end{align*}
so Corollary~\ref{C.Explicit} and Assumption~\ref{A.RGmodels} give the result. Therefore, from now on, assume that Assumption~\ref{A.PhiMap}(4)(i) holds, i.e., $\| C\|_p \leq K$ and $\| C^{(0)} \|_p \leq K_0$ for any {vertex-weighted} directed graph $G(V_n, E_n; \mathscr{A}_n)$ in the graph sequence $\boldsymbol{G}$, where $C$ and $C^{(0)}$ are the matrices whose $(i,j)$th components are $\sigma_-(\mathbf{X}_i) \sigma_+(\mathbf{X}_j) 1(j \to i)$ and $\sigma_-(\mathbf{X}_i) |g(0,\mathbf{X}_j)| 1(j \to i)$, respectively.

Now let $\mathbf{\tilde R}^{(k)}$ be the vector whose $i$th components is given by
\begin{align*}
\tilde R_i^{(k)} &= \Phi\left( \mathbf{X}_i, \zeta_i^{(k-1)}, \left\{ g(\mathcal{R}_{\theta(j)}^{(k-1)}, \mathbf{X}_j), \xi_{j,i}^{(k-1)}: j \to i \right\} \right). 
\end{align*}
Then, the triangle inequality gives for any $k \geq 1$, 
\begin{align*}
 &\left( \frac{1}{n} \mathbb{E}_n\left[ \left\|  \mathcal{J}^{(k)} (\mathbf{R}^{(k)}  - \boldsymbol{\mathcal{R}}^{(k)}) \right\|_p^p  \right] \right)^{1/p} \\
 &\leq  \left( \frac{1}{n} \mathbb{E}_n\left[ \left\|  \mathcal{J}^{(k)} (\mathbf{R}^{(k)} - \mathbf{\tilde R}^{(k)})  \right\|_p^p  \right] \right)^{1/p} +   \left( \frac{1}{n} \mathbb{E}_n\left[ \left\|  \mathcal{J}^{(k)} \mathbf{\tilde R}^{(k)}   \right\|_p^p  \right] \right)^{1/p}  + \left( \frac{1}{n} \mathbb{E}_n\left[ \left\|  \mathcal{J}^{(k)} \boldsymbol{\mathcal{R}}^{(k)} \right\|_p^p  \right] \right)^{1/p}.
 \end{align*}
Now condition on $\mathbf{R}^{(k-1)}$ and $\boldsymbol{\mathcal{R}}^{(k-1)}$ and use {Assumption~\ref{A.PhiMap}(4)(i)} to obtain that
\begin{align*}
 \left( \frac{1}{n} \mathbb{E}_n\left[ \left\|  \mathcal{J}^{(k)} (\mathbf{R}^{(k)} - \mathbf{\tilde R}^{(k)})  \right\|_p^p  \right] \right)^{1/p} &\leq  \left( \frac{1}{n} \mathbb{E}_n\left[ \left\|  C \cdot \left| \mathbf{R}^{(k-1)} - \boldsymbol{\mathcal{R}}^{(k-1)} \right|  \right\|_p^p  \right] \right)^{1/p} \\
 &\leq \left( \frac{1}{n} \mathbb{E}_n\left[ \left\|  C \right\|_p^p \left\| \mathbf{R}^{(k-1)} - \boldsymbol{\mathcal{R}}^{(k-1)} \right\|_p^p  \right] \right)^{1/p} \\
 &\leq K \left( \frac{1}{n} \mathbb{E}_n\left[ \left\| \mathbf{R}^{(k-1)} - \boldsymbol{\mathcal{R}}^{(k-1)} \right\|_p^p  \right] \right)^{1/p}. 
\end{align*}
Next, let $C_*$ be the matrix whose $(i,j)$th component is $\sigma_-(\mathbf{X}_i) 1(j \to i)$, let $\boldsymbol{\beta}$, $\mathbf{g}_0$ and $\mathbf{\tilde V}^{(k-1)}$ be the vectors whose $i$th components are $\beta(\mathbf{X}_i)$, $|g(0,\mathbf{X}_i)|$ and $|g(\mathcal{R}^{(k-1)}_{\emptyset(i)}, \mathbf{X}_i)|$, respectively,  and use Assumption~\ref{A.PhiMap} again to obtain that
\begin{align*}
\left( \frac{1}{n} \mathbb{E}_n\left[ \left\|  \mathcal{J}^{(k)} \mathbf{\tilde R}^{(k)}   \right\|_p^p  \right] \right)^{1/p} &\leq \left( \frac{1}{n} \mathbb{E}_n\left[ \mathbf{E}_n\left[ \left. \left\|  \mathcal{J}^{(k)} \mathbf{\tilde R}^{(k)}   \right\|_p^p \right| \boldsymbol{\mathcal{R}}^{(k-1)} \right]  \right] \right)^{1/p} \\
&\leq \left( \frac{1}{n} \mathbb{E}_n\left[ \left\|  \mathcal{J}^{(k)} (C_* \mathbf{\tilde V}^{(k-1)} + \boldsymbol{\beta})    \right\|_p^p  \right] \right)^{1/p} \\
&\leq \left( \frac{1}{n} \mathbb{E}_n\left[ \left\|  \mathcal{J}^{(k)} C_* | \mathbf{\tilde V}^{(k-1)}  - \mathbf{g}_0|    \right\|_p^p  \right] \right)^{1/p} \\
&\hspace{5mm} + \left( \frac{1}{n} \mathbb{E}_n\left[ \left\|  \mathcal{J}^{(k)} C_* \mathbf{g}_0   \right\|_p^p  \right] \right)^{1/p} + \left( \frac{1}{n} \mathbb{E}_n\left[ \left\|  \mathcal{J}^{(k)} \boldsymbol{\beta}   \right\|_p^p  \right] \right)^{1/p} \\
&\leq \left( \frac{1}{n} \mathbb{E}_n\left[ \left\|  \mathcal{J}^{(k)} C | \boldsymbol{\mathcal{R}}^{(k-1)}  |    \right\|_p^p  \right] \right)^{1/p} \\
&\hspace{5mm} + \left( \frac{1}{n} \mathbb{E}_n\left[ \left\|  \mathcal{J}^{(k)} C^{(0)} \mathbf{e}   \right\|_p^p  \right] \right)^{1/p} + \left( \frac{1}{n} \mathbb{E}_n\left[ \left\|  \mathcal{J}^{(k)} \boldsymbol{\beta}   \right\|_p^p  \right] \right)^{1/p},
\end{align*}
where $\mathbf{e}$ is the (column) vector of ones in $\mathbb{R}^n$. 

We have thus shown that
\begin{align*}
a_{k,n} &:= \left( \mathbb{E}_n\left[ \left| R_I^{(k)} - \mathcal{R}^{(k)}_\emptyset \right|^p  \right] \right)^{1/p}  \\
&\leq K a_{k-1,n} + \left( \frac{1}{n} \mathbb{E}_n\left[ \left\| (\mathbb{I} - \mathcal{J}^{(k)}) (\mathbf{R}^{(k)} - \boldsymbol{\mathcal{R}}^{(k)}) \right\|_p^p  \right] \right)^{1/p} + \left( \frac{1}{n} \mathbb{E}_n\left[ \left\|  \mathcal{J}^{(k)} \boldsymbol{\mathcal{R}}^{(k)} \right\|_p^p  \right] \right)^{1/p} \\
&\hspace{5mm} + \left( \frac{1}{n} \mathbb{E}_n\left[ \left\|  \mathcal{J}^{(k)} C | \boldsymbol{\mathcal{R}}^{(k-1)}  |    \right\|_p^p  \right] \right)^{1/p} +    \left( \frac{1}{n} \mathbb{E}_n\left[ \left\|  \mathcal{J}^{(k)} C^{(0)} \mathbf{e} \right\|_p^p  \right] \right)^{1/p}  + \left( \frac{1}{n} \mathbb{E}_n\left[ \left\|  \mathcal{J}^{(k)}  \boldsymbol{\beta}  \right\|_p^p  \right] \right)^{1/p}.
\end{align*}

Next, use Corollary~\ref{C.Explicit} to obtain that
$$ \left( \frac{1}{n} \mathbb{E}_n\left[ \left\| (\mathbb{I} - \mathcal{J}^{(k)}) (\mathbf{R}^{(k)} - \boldsymbol{\mathcal{R}}^{(k)}) \right\|_p^p  \right] \right)^{1/p} = \left( \mathbb{E}_n \left[ 1(F_\emptyset^{(k,\epsilon)}) \left| R_I^{(k)} - \mathcal{R}_\emptyset^{(k)} \right|^p \right] \right)^{1/p} \leq H_{k, \epsilon} w(\epsilon),$$
where $w(\epsilon) = H \epsilon^\alpha + Q\epsilon^\gamma + HQ \epsilon^{\alpha+\gamma}$ and $H_{k, \epsilon} < \infty$ is defined in the corollary. Now choose $M = M(\epsilon) > 0$ such that $\max_{0 \leq r \leq k} E\left[ \left(( |\mathcal{R}_\emptyset^{(r)}| - M)^+ \right)^p \right] < \epsilon$ and note that if we let $\mathbf{x} \wedge \mathbf{y}$ denote the vector whose $i$th component is $x_i \wedge y_i$ and $(\mathbf{x} - \mathbf{y})^+$ the one whose $i$th component is $(x_i - y_i)^+$, then
\begin{align*}
&\left( \frac{1}{n} \mathbb{E}_n\left[ \left\|  \mathcal{J}^{(k)} C | \boldsymbol{\mathcal{R}}^{(k-1)}  |    \right\|_p^p  \right] \right)^{1/p}  \\
&\leq  \left( \frac{1}{n} \mathbb{E}_n\left[ \left\|  \mathcal{J}^{(k)} C (| \boldsymbol{\mathcal{R}}^{(k-1)}  | \wedge (M\mathbf{e}))   \right\|_p^p  \right] \right)^{1/p}  +  \left( \frac{1}{n} \mathbb{E}_n\left[ \left\|   C \right\|_p^p \left\| ( | \boldsymbol{\mathcal{R}}^{(k-1)}  | - M\mathbf{e})^+   \right\|_p^p  \right] \right)^{1/p} \\
&\leq M \left( \frac{1}{n} \mathbb{E}_n\left[ \left\|  \mathcal{J}^{(k)} C \mathbf{e}   \right\|_p^p  \right] \right)^{1/p}  + K \left( E\left[ \left( |\mathcal{R}_\emptyset^{(k-1)}| - M)^+ \right)^p \right] \right)^{1/p} \\
&\leq M \left(  \mathbb{E}_n\left[ \left(  J_I^{(k)} \sigma_-(\mathbf{X}_I) \sum_{j \to I} \sigma_+(\mathbf{X}_j)   \right)^p  \right] \right)^{1/p} + K \epsilon. 
\end{align*}

It follows that
$$a_{k,n} \leq K a_{k-1,n} + b_{k,n}(\epsilon),$$
where
\begin{align*}
b_{k,n}(\epsilon) &=  H_{k,\epsilon} w(\epsilon) + K \epsilon + \left( \mathbb{E}_n\left[ J_I^{(k)} |\mathcal{R}_\emptyset^{(k)}|^p \right] \right)^{1/p} +  \left(  \mathbb{E}_n\left[ \left(  J_I^{(k)} \sigma_-(\mathbf{X}_I) \sum_{j \to I} |g(0,\mathbf{X}_j)|   \right)^p  \right] \right)^{1/p}   \\
&\hspace{5mm} +  M  \left(  \mathbb{E}_n\left[ \left(  J_I^{(k)} \sigma_-(\mathbf{X}_I) \sum_{j \to I} \sigma_+(\mathbf{X}_j)   \right)^p  \right] \right)^{1/p} +   \left( \mathbb{E}_n\left[ J_I^{(k)} \beta(\mathbf{X}_I)^p \right] \right)^{1/p}  .
\end{align*}
 Iterating the recursion and noting that $a_{0,n} = 0$ gives
 \begin{align*}
 a_{k,n} &\leq K (K a_{k-2,n} + b_{k-1,n}) + b_{k,n} \leq K^{k} a_{0,n}  + \sum_{r=0}^{k-1} K^r b_{k-r,n} \leq \max_{1 \leq r \leq k} b_{r,n} \sum_{r=0}^{k-1} K^r. 
 \end{align*}

It only remains to compute the limit of $b_{r,n}$ as $n \to \infty$ for each fixed $r \geq 1$. To do this, use dominated convergence to obtain that
$$\mathbb{E}_n\left[ J_I^{(r)} |\mathcal{R}_\emptyset^{(k)}|^p \right] \xrightarrow{P} 0, \qquad n \to \infty,$$
and use Lemma~\ref{L.CondDp} in combination with Assumption~\ref{A.RGmodels} to obtain that
$$\mathbb{E}_n\left[ J_I^{(r)} \beta(\mathbf{X}_I)^p \right] \xrightarrow{P} 0, \qquad n \to \infty.$$
Finally, use Lemma~\ref{L.DecoupledEvents} to obtain that
$$\left(  \mathbb{E}_n\left[ \left(  J_I^{(k)} \sigma_-(\mathbf{X}_I) \sum_{j \to I} \sigma_+(\mathbf{X}_j)   \right)^p  \right] \right)^{1/p} +   \left(  \mathbb{E}_n\left[ \left(  J_I^{(k)} \sigma_-(\mathbf{X}_I) \sum_{j \to I} |g(0,\mathbf{X}_j) |  \right)^p  \right] \right)^{1/p} \xrightarrow{P} 0,$$ as $n \to \infty$. We conclude that
$$\limsup_{n \to \infty} \, b_{r,n}(\epsilon) \leq H_{r,\epsilon} w(\epsilon) + K \epsilon,$$
which in turn yields
$$\limsup_{n \to \infty} \, a_{k,n} \leq \max_{1 \leq r \leq k} ( H_{r,\epsilon} w(\epsilon) + K \epsilon) \sum_{r=0}^{k-1} K^r.$$
Taking $\epsilon \downarrow 0$ completes the proof. 
\end{proof}

{Next, we prove Part (B) of Theorem~\ref{T.Main}.}

{\begin{lemma}
Under the conditions of Theorem~\ref{T.Main}, for any fixed $m,k \geq 1$, $\{I_j: 1 \leq j \leq m\}$ i.i.d.~uniformly chosen in $V_n$, independent of $\mathscr{G}_n$, and for any set of bounded and continuous functions $\{f_j: 1 \leq j \leq m\}$ on $\mathbb{R}^{k+1}$, we have
$$E\left[ \prod_{j=1}^m f_j(R_{I_j}^{(0)}, \dots, R_{I_j}^{(k)}) \right] \to \prod_{j = 1}^m E[f_j(\mathcal{R}_\emptyset^{(0)}, \dots, \mathcal{R}_\emptyset^{(k)})], \qquad n \to \infty.$$
\end{lemma}}

\begin{proof}
By Definition~\ref{D.StrongCoupling}, there exists a set of i.i.d.~copies of the marked Galton-Watson process $\mathcal{T}^{(k)}(\boldsymbol{\mathcal{X}})$, denoted $\{ \mathcal{T}_{\emptyset(I_j)}^{(k)}(\boldsymbol{\mathcal{X}}): 1 \leq j \leq m\}$, whose roots correspond to the vertices $\{I_j: 1 \leq j \leq m\}$, and such that for any $\epsilon > 0$, $\mathbb{P}_n\left( \bigcap_{j =1}^m C_{I_j}^{(k,\epsilon)} \right) \xrightarrow{P} 1$ as $n \to \infty$, where the events $C_{I_j}^{(k,\epsilon)}$ ensure that  $\mathcal{G}_{I_j}^{(k)} \simeq \mathcal{T}^{(k)}_{\emptyset(I_j)}$ for all $1 \leq j \leq m$ and their corresponding marks are within $\epsilon$ distance from each other (see Definition~\ref{D.StrongCoupling}). It follows that on the event $\mathcal{E}_n = \bigcap_{j=1}^m C_{I_j}^{(k,\epsilon)}$ the subgraphs $\{ \mathcal{G}_{I_j}^{(k)}: 1 \leq j \leq m\}$ of $G(V_n,E_n; \mathscr{A}_n)$ share no vertices. Hence, by coupling the noises as described at the beginning of Section~\ref{S.Proofs}, we can construct each of the trajectories $\{ (\mathcal{R}_{\emptyset(I_j)}^{(0)}, \dots, \mathcal{R}_{\emptyset(I_j)}^{(k)}): 1 \leq j \leq m\}$ on the coupled trees in such a way that they are conditionally independent given $\{ \mathcal{G}_{I_j}^{(k)}: 1 \leq j \leq m\}$.  Therefore, for these constructions we have
\begin{align*}
&\left| E\left[ \prod_{j=1}^m f_j(R_{I_j}^{(0)}, \dots, R_{I_j}^{(k)}) \right] - \prod_{j = 1}^m E[f_j(\mathcal{R}_\emptyset^{(0)}, \dots, \mathcal{R}_\emptyset^{(k)})] \right| \\
&\leq \left| E\left[ \left( \prod_{j=1}^m f_j(R_{I_j}^{(0)}, \dots, R_{I_j}^{(k)}) - \prod_{j=1}^m f_j(\mathcal{R}_{\emptyset(I_j)}^{(0)}, \dots, \mathcal{R}_{\emptyset(I_j)}^{(k)}) \right)  1(\mathcal{E}_n) \right] \right| + 2 \prod_{j=1}^m \sup_{\mathbf{x} \in \mathbb{R}^{k+1}} |f_j(\mathbf{x})| P(\mathcal{E}_n^c)  \\
&\leq    \sum_{j=1}^m \prod_{t\neq j} \sup_{\mathbf{x} \in \mathbb{R}^{k+1}} |f_t(\mathbf{x})|  E\left[ \left|  f_j(R_{I_j}^{(0)}, \dots, R_{I_j}^{(k)}) - f_j(\mathcal{R}_{\emptyset(I_j)}^{(0)}, \dots, \mathcal{R}_{\emptyset(I_j)}^{(k)}) \right| \right] + 2 \prod_{j=1}^m \sup_{\mathbf{x} \in \mathbb{R}^{k+1}} |f_j(\mathbf{x})| P(\mathcal{E}_n^c),
\end{align*}
which converges to zero as $n \to \infty$ since $\max_{0\leq r\leq k} |R_{I_j}^{(r)} - \mathcal{R}_{\emptyset(I_j)}^{(r)} | \xrightarrow{P} 0$ for each $1\leq j \leq m$ (see Remark~\ref{R.Observations}) and each $f_j$ is bounded and continuous.
\end{proof}

We now prove {Part (C)} of Theorem~\ref{T.Main}, which relates to the distributional fixed-point equation. 

\begin{theorem}
Let $\nu_k(\cdot) = P\left( \mathcal{R}^{(k)}_\emptyset \in \cdot \right)$. Then, if $c^p = E\left[ (\mathcal{N}_1 \sigma_+({\boldsymbol{\mathcal{X}}_1}) \sigma_-({\boldsymbol{\mathcal{X}}_1}))^p \right] \in (0, 1)$ and $r_0^p = E[ |R^{(0)}|^p] < \infty$, where $R^{(0)}$ is distributed according to $\mu_0$, then, there exists a probability measure $\nu$ on $\mathbb{R}$ such that
$$d_p(\nu_k, \nu) \to 0, \qquad k \to \infty.$$
Moreover, $\nu$ is the probability measure of a random variable $\mathcal{R}^*$ that satisfies:
\begin{align*}
\mathcal{R}^* &= \Phi\left( {\boldsymbol{\mathcal{X}}_\emptyset}, \zeta, \{ \mathcal{V}_j, \xi_j: 1 \leq j \leq \mathcal{N}_\emptyset \} \right), 
\end{align*}
with the $\{\mathcal{V}_j\}$ i.i.d.~copies of $\mathcal{V}$, independent of ${\boldsymbol{\mathcal{X}}_\emptyset}$ and of $(\zeta, \{ \xi_j: j \geq 1\})$, and $\mathcal{V}$ the attracting endogenous solution to the distributional fixed-point equation:
\begin{align*}
\mathcal{V} &\stackrel{\mathcal{D}}{=} \Psi\left( {\boldsymbol{\mathcal{X}}_1}, \zeta, \{ \mathcal{V}_j, \xi_j: 1 \leq j \leq \mathcal{N}_1 \} \right),
\end{align*}
where the $\{ \mathcal{V}_j \}$ i.i.d.~copies of $\mathcal{V}$, independent of $({\boldsymbol{\mathcal{X}}_1}, \zeta, \{ \xi_j: j \geq 1\})$.
\end{theorem}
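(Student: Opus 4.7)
My plan is to work entirely on the limiting marked Galton-Watson tree and establish that the distributions $\eta_k$ of the iterates $\mathcal{V}^{(k)}_\emptyset$ defined in \eqref{eq:LimitConstruc} form a Cauchy sequence in $d_p$, with geometric rate $c$. Once this is done, the convergence of $\nu_k$ will follow by applying $\Phi$ one last time at the root and controlling the error through Minkowski's inequality on the offspring $\mathcal{N}_\emptyset$. The initial distribution for the iteration is taken to be $\eta_0(\cdot) = P(g(R^{(0)}, \boldsymbol{X}_1) \in \cdot)$, matching the leaf values $\mathcal{V}^{(0)}_\mathbf{i} = g(R^{(0)}_{\theta(\mathbf{i})}, \boldsymbol{X}_\mathbf{i})$ used in the construction of $\mathcal{R}^{(k)}_\emptyset$ at the beginning of Section~\ref{S.Proofs}.

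To establish the contraction, couple two consecutive iterates by using the same tree and the same noise variables. By Assumption~\ref{A.PhiMap}(1-2) applied to $\Psi = g \circ \Phi$, conditioning on $\boldsymbol{X}_1$ and the noises at the root level of the subtree and taking $L_p$-norms,
\begin{align*}
\bigl( E\bigl[ |\mathcal{V}^{(k+1)}_\emptyset - \mathcal{V}^{(k)}_\emptyset|^p \bigr] \bigr)^{1/p}
&\leq \bigl( E\bigl[ \bigl( \sigma_+(\boldsymbol{X}_1)\sigma_-(\boldsymbol{X}_1) \textstyle\sum_{j=1}^{\mathcal{N}_1} |\mathcal{V}^{(k)}_j - \mathcal{V}^{(k-1)}_j| \bigr)^p \bigr] \bigr)^{1/p} \\
&\leq c \, \bigl( E\bigl[ |\mathcal{V}^{(k)}_\emptyset - \mathcal{V}^{(k-1)}_\emptyset|^p \bigr] \bigr)^{1/p},
\end{align*}
where the last line follows from Minkowski's inequality (conditionally on $\boldsymbol{X}_1$) together with the branching property and the i.i.d.~nature of the subtrees, exactly as in the proofs of Lemma~\ref{L.Moments} and Theorem~\ref{T.CouplingHolds}. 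Since this optimal coupling is an upper bound for $d_p$, we obtain $d_p(\eta_{k+1}, \eta_k) \leq c \, d_p(\eta_k, \eta_{k-1})$. The initial gap $d_p(\eta_1, \eta_0)$ is finite because both $\eta_0$ and $\eta_1$ have finite $p$th moment by Lemma~\ref{L.Moments} combined with the assumption $E[|R^{(0)}|^p] < \infty$ (or the uniform boundedness in Assumption~\ref{A.PhiMap}(4)(ii)).

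With $c < 1$, geometric summation shows $\{\eta_k\}$ is Cauchy in the complete metric space $(\mathcal{P}_p(\mathbb{R}), d_p)$, so $\eta_k \to \eta$ in $d_p$ at rate $c^k$. Applying the same coupling argument between any other initial distribution with finite $p$th moment shows that the limit $\eta$ does not depend on $\eta_0$, so $\eta$ is the \emph{attracting} fixed point. Endogeny follows by construction: one can define $\mathcal{V}_\emptyset$ directly as the a.s.~limit (along a Cauchy subsequence in $L_p$) of $\mathcal{V}^{(k)}_\emptyset$, which is measurable with respect to $\mathcal{G}_\mathcal{T}$ since each iterate is. Passing to the limit in the recursion \eqref{eq:LimitConstruc} via the Lipschitz bound on $\Psi$ shows that this $\mathcal{V}_\emptyset$ satisfies the stated distributional fixed-point equation. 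Finally, applying Assumption~\ref{A.PhiMap}(1) to the root map $\Phi$ with the coupling of $\{\mathcal{V}^{(k-1)}_j\}$ and i.i.d.~copies $\{\mathcal{V}_j\}$ of $\mathcal{V}$,
\begin{align*}
d_p(\nu_k, \nu) \leq \bigl( E\bigl[ (\mathcal{N}_\emptyset \sigma_-(\boldsymbol{X}_\emptyset))^p \bigr] \bigr)^{1/p} \, d_p(\eta_{k-1}, \eta) \to 0,
\end{align*}
where the prefactor is finite by Assumption~\ref{A.RGmodels}.

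The main obstacle is a bookkeeping one rather than a conceptual one: carefully aligning the tree-based iteration $\eta_k$ (with $\eta_0 = \mathrm{law}(g(R^{(0)}, \boldsymbol{X}_1))$) with the distribution $\nu_k = \mathrm{law}(\mathcal{R}^{(k)}_\emptyset)$ produced one step later through $\Phi$ at the root (with mark $\boldsymbol{X}_\emptyset$ and offspring $\mathcal{N}_\emptyset$, which follow possibly different distributions due to the delay), and verifying that the a.s.~convergence (along a subsequence) needed for endogeny is compatible with the branching recursion. Both issues dissolve by applying Minkowski and the branching property on the appropriate generation, exactly as above.
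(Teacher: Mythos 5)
Your proposal is correct and follows essentially the same route as the paper: a geometric $L_p/d_p$-contraction along the tree iteration, followed by completeness of the Wasserstein space, an independence-of-initialization argument for the ``attracting'' property, and measurability of the iterates for endogeny. The paper's proof differs only cosmetically: it fixes the primary initialization at $\eta_0 = \text{law}(g(0,\boldsymbol X_1))$ and subsequently shows the $\mu_0$-initialized chain converges to the same $\nu$, whereas you start directly from $\eta_0 = \text{law}(g(R^{(0)},\boldsymbol X_1))$; it phrases the Cauchy estimate on the coupled random variables $\mathcal{R}_\emptyset^{(k)}, \mathcal{R}_\emptyset^{(k+m)}$ in $L^p$ rather than abstractly in $d_p$; and it proves endogeny via conditional Jensen ($E[\mathcal V \mid \mathcal G_{\mathcal T}] = \mathcal V$) rather than by passing to an a.s.\ convergent subsequence--these are equivalent in content. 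One small index slip in your write-up: in the contraction display you compare $\mathcal V^{(k+1)}_\emptyset$ to $\mathcal V^{(k)}_\emptyset$ while conditioning on $\boldsymbol X_1$; the contraction factor $c$ pertains to non-root (non-delayed) generations, and the root contributes the separate prefactor $\bigl(E[(\mathcal N_\emptyset \sigma_-(\boldsymbol X_\emptyset))^p]\bigr)^{1/p}$ that you correctly isolate in the final display, so the argument is sound, but the notation should be $\mathcal V^{(k)}_1$ in the contraction step.
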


\begin{proof}
Fix $k \geq 1$ and define
\begin{align*}
\mathcal{V}_\mathbf{i}^{(0)} &= g(0, {\boldsymbol{\mathcal{X}}_{\bf i}}), \quad \mathbf{i} \in \mathcal{A}_k, \\
\mathcal{V}_\mathbf{i}^{(r)} &= \Psi \left({\boldsymbol{\mathcal{X}}_{\bf i}}, \zeta_{\bf i}, \{ \mathcal{V}_{({\bf i},j)}^{(r)}, \xi_{({\bf i},j)}: 1 \leq j \leq \mathcal{N}_{\bf i}  \} \right), \quad \mathbf{i} \in \mathcal{A}_{k-r}, \, 1 \leq r < k, \\
\mathcal{R}_{\emptyset}^{(k)} &= \Phi\left( {\boldsymbol{\mathcal{X}}_\emptyset}, \zeta_\emptyset, \{ \mathcal{V}_{j}^{(k-1)}, \xi_j: 1 \leq j \leq \mathcal{N}_\emptyset  \} \right),
\end{align*}
which corresponds to taking $\nu_0$ to be the Dirac measure at zero. Let $\eta_r(\cdot) = P(\mathcal{V}_1^{(r)} \in \cdot)$ for $r \geq 0$.  Now fix $m \geq 1$ and note that
\begin{align*}
&\left( E\left[ \left| \mathcal{R}_\emptyset^{(k)} - \mathcal{R}_\emptyset^{(k+m)} \right|^p \right] \right)^{1/p} \\
&= \left( E\left[ E\left[ \left. \left| \mathcal{R}_\emptyset^{(k)} - \mathcal{R}_\emptyset^{(k+m)} \right|^p \right| {\boldsymbol{\mathcal{X}}_\emptyset}, \{ \mathcal{V}_j^{(k-1)}, \mathcal{V}_j^{(k-1+m)}: j \in \mathcal{A}_1 \} \right] \right] \right)^{1/p} \\
&\leq  \left( E\left[ \left( \sigma_-({\boldsymbol{\mathcal{X}}_\emptyset}) \sum_{j=1}^{\mathcal{N}_\emptyset} \left| \mathcal{V}_j^{(k-1)} - \mathcal{V}_j^{(k-1+m)} \right|  \right)^p \right] \right)^{1/p} \\
&=  \left( E\left[ \sigma_-({\boldsymbol{\mathcal{X}}_\emptyset})^p E\left[ \left. \left(  \sum_{j=1}^{\mathcal{N}_\emptyset} \left| \mathcal{V}_j^{(k-1)} - \mathcal{V}_j^{(k-1+m)} \right|  \right)^p \right| \mathcal{N}_\emptyset \right] \right] \right)^{1/p} \\
&\leq  \left( E\left[ \sigma_-({\boldsymbol{\mathcal{X}}_\emptyset})^p \left(  \sum_{j=1}^{\mathcal{N}_\emptyset} \left( E\left[ \left.   \left| \mathcal{V}_j^{(k-1)} - \mathcal{V}_j^{(k-1+m)} \right|^p \right| \mathcal{N}_\emptyset \right] \right)^{1/p} \right)^p \right] \right)^{1/p} \\
&=  \left( E\left[ (\sigma_-({\boldsymbol{\mathcal{X}}_\emptyset}) \mathcal{N}_\emptyset )^p \right] \right)^{1/p} \left( E\left[   \left| \mathcal{V}_1^{(k-1)} - \mathcal{V}_1^{(k-1+m)} \right|^p  \right] \right)^{1/p}. 
\end{align*}

Similarly, {Assumption~\ref{A.PhiMap}(1-2)} also yields
\begin{align*}
&\left( E\left[    \left| \mathcal{V}_1^{(k-1)} - \mathcal{V}_1^{(k-1+m)} \right|^p  \right] \right)^{1/p} \\
&\leq  \left( E\left[ (\sigma_-({\boldsymbol{\mathcal{X}}_1}) \sigma_+({\boldsymbol{\mathcal{X}}_1}) \mathcal{N}_1)^p \right] \right)^{1/p} \left( E\left[   \left| \mathcal{V}_{(1,1)}^{(k-2)} - \mathcal{V}_{(1,1)}^{(k-2+m)} \right|^p  \right] \right)^{1/p} \\
&= c \left( E\left[  \left| \mathcal{V}_1^{(k-2)} - \mathcal{V}_1^{(k-2+m)} \right|^p  \right] \right)^{1/p} \\
&\leq c^{k-1} \left( E\left[  \left| \mathcal{V}_1^{(0)} - \mathcal{V}_1^{(m)} \right|^p  \right] \right)^{1/p} \\
&\leq c^{k-1} \left(  \left( E\left[    \left| \mathcal{V}_1^{(0)} \right|^p  \right] \right)^{1/p} +  \left( E\left[  \left|  \mathcal{V}_1^{(m)} \right|^p  \right] \right)^{1/p} \right).
\end{align*}

Now use Lemma~\ref{L.Moments} to obtain that 
\begin{align*}
\sup_{m \geq 0} \left( E\left[  \left(  \left|  \mathcal{V}_1^{(m)} \right|  \right)^p  \right] \right)^{1/p}& \leq (1-c)^{-1} \left( \left( E \left[ (\sigma_+({\boldsymbol{\mathcal{X}}_1}) \beta({\boldsymbol{\mathcal{X}}_1}))^p \right] \right)^{1/p}  +  \left( E \left[ |g(0,{\boldsymbol{\mathcal{X}}_1})|^p \right] \right)^{1/p} \right) \\
&\hspace{5mm} + \left( E\left[ |g(0, {\boldsymbol{\mathcal{X}}_1})|^p \right] \right)^{1/p} < \infty
\end{align*}
and conclude that
\begin{align*}
&\sup_{m \geq 0} \left( E\left[ \left| \mathcal{R}_\emptyset^{(k)} - \mathcal{R}_\emptyset^{(k+m)} \right|^p \right] \right)^{1/p} \\
&\leq  \left( E\left[ (\sigma_-({\boldsymbol{\mathcal{X}}_\emptyset}) \mathcal{N}_\emptyset )^p \right] \right)^{1/p} \left( (1-c)^{-1} \left(  \left( E \left[ (\sigma_+({\boldsymbol{\mathcal{X}}_1}) \beta({\boldsymbol{\mathcal{X}}_1}))^p \right] \right)^{1/p} +  \left( E \left[ |g(0,{\boldsymbol{\mathcal{X}}_1})|^p \right] \right)^{1/p}   \right)  \right. \\
&\hspace{5mm} \left. + 2 \left( E\left[ |g(0, {\boldsymbol{\mathcal{X}}_1})|^p \right] \right)^{1/p} \right) c^{k-1} \rightarrow 0, \qquad k \to \infty.
\end{align*}

Therefore, the sequences of random variables $\{ \mathcal{R}_\emptyset^{(k)}: k \geq 0\}$ and $\{ \mathcal{V}_1^{(k)}: k \geq 0\}$ are Cauchy under the $L^p$ norm, and since the $L^p$ norm is a complete metric, there exist random variables $\mathcal{R}^*$ and $\mathcal{V}$ such that
$$\left( E\left[ \left| \mathcal{R}_\emptyset^{(k)} - \mathcal{R}^* \right|^p \right] \right)^{1/p} + \left( E\left[ \left| \mathcal{V}_1^{(k)} - \mathcal{V} \right|^p \right] \right)^{1/p} \to 0, \qquad k \to \infty.$$
Let $\nu(\cdot) = P(\mathcal{R}^* \in \cdot)$ and $\eta(\cdot) = P(\mathcal{V} \in \cdot)$, then the above also implies that
$$d_p(\eta_k, \eta) + d_p(\nu_k, \nu) \to 0, \qquad k \to \infty.$$

Continuity of the map $\Psi$ gives that $\eta$ solves the distributional fixed-point equation:
\begin{equation} \label{eq:ProofSFPE}
\mathcal{V} \stackrel{\mathcal{D}}{=}  \Psi\left( {\boldsymbol{\mathcal{X}}_1}, \zeta, \{ \mathcal{V}_j, \xi_j: 1 \leq j \leq \mathcal{N}_1 \} \right),
\end{equation}
where the $\{ \mathcal{V}_j \}$ i.i.d.~copies of $\mathcal{V}$, independent of $({\boldsymbol{\mathcal{X}}_1}, \zeta, \{ \xi_j: j \geq 1\})$, and the explicit construction of $\mathcal{R}^{(k)}_\emptyset$ in terms of the $\{\mathcal{V}_j^{(k)}: j \geq 1\}$ implies that $\nu$ is the probability measure of
$$\mathcal{R}^* = \Phi\left( {\boldsymbol{\mathcal{X}}_\emptyset}, \zeta, \{ \mathcal{V}_j, \xi_j: 1 \leq j \leq \mathcal{N}_\emptyset \} \right),$$
with the $\{\mathcal{V}_j\}$  i.i.d.~copies of $\mathcal{V}$, independent of $({\boldsymbol{\mathcal{X}}_\emptyset}, \zeta, \{ \xi_j: j \geq 1\})$. 

To see that the limiting measure $\nu$ is the same for any initial distribution $\mu_0$, note that if $\tilde \nu_k$ is the probability measure of $\mathcal{R}_\emptyset^{(k)}$ when the $\{\mathcal{R}_\mathbf{i}^{(0)}: \mathbf{i} \in \mathcal{A}_k\}$ are chosen according to $\tilde \nu_0 = \mu_0$, the same computations used above give
\begin{align*}
d_p(\nu_k, \tilde \nu_k) &\leq \left( E\left[ \sigma_-({\boldsymbol{\mathcal{X}}_\emptyset}) \mathcal{N}_\emptyset)^p \right] \right)^{1/p} c^{k-1} \left( \left( E\left[ |g(0,{\boldsymbol{\mathcal{X}}_1})|^p \right] \right)^{1/p} + \left( E\left[ \left|g(R^{(0)},{\boldsymbol{\mathcal{X}}_1}) \right|^p \right] \right)^{1/p}  \right) \\
&\leq \left( E\left[ \sigma_-({\boldsymbol{\mathcal{X}}_\emptyset}) \mathcal{N}_\emptyset)^p \right] \right)^{1/p} c^{k-1} \left( 2\left( E\left[ |g(0,{\boldsymbol{\mathcal{X}}_1})|^p \right] \right)^{1/p} + \left( E\left[ \sigma_+({\boldsymbol{\mathcal{X}}_1})^p \right] \right)^{1/p} r_0  \right) \to 0
\end{align*}
as $k \to \infty$, which implies that $d_p(\tilde \nu_k, \nu) \to 0$ as $k \to \infty$. Hence, $\eta$ is the unique solution to \eqref{eq:ProofSFPE} in the space of probability measures on $\mathbb{R}$ with finite $p$th moment. 

Finally, to see that $\eta$ is an endogenous solution to \eqref{eq:ProofSFPE}, note that the sequence $\{ \mathcal{V}^{(k)}_1: k \geq 0\}$ is measurable with respect to $\mathcal{G}_\mathcal{T}$, so if $\mathcal{V}$ denotes its $L^p$ limit, then, by Jensen's inequality,
\begin{align*}
E\left[ \left| \mathcal{V}_1^{(k)} - E\left[ \left. \mathcal{V} \right| \mathcal{G}_\mathcal{T} \right] \right|^p \right] &\leq E\left[   E\left[ \left. \left| \mathcal{V}_1^{(k)} - \mathcal{V} \right|^p \right| \mathcal{G}_\mathcal{T} \right]  \right] = E\left[    \left| \mathcal{V}_1^{(k)} - \mathcal{V} \right|^p   \right] \to 0, \qquad k \to \infty.
\end{align*}
It follows  that $E\left[ \left. \mathcal{V} \right| \mathcal{G}_\mathcal{T} \right]  = \mathcal{V}$, and therefore, $\eta$ is endogenous. 
\end{proof}

It only remains to prove Corollary~\ref{C.MainContraction}, for which it is useful to start with the part that relates to the Markov chain $\{ \mathbf{R}^{(k)}: k \geq 0\}$ on $\mathbb{R}^n$.

\begin{lemma} \label{L.VectorMC}
Suppose that Assumption~\ref{A.PhiMap} holds with Assumption~\ref{A.PhiMap}(4)(i) and  $\| C \|_p \leq K < 1$. Define $\lambda_{k,n}(\cdot) = \mathbf{P}_n\left(\mathbf{R}^{(k)} \in \cdot\right)$. Then, provided $r_0^p = E[|R_1^{(0)}|^p]  < \infty$, there exists a probability measure $\lambda_n$ on $\mathbb{R}^n$ such that 
$$d_p(\lambda_{k,n}, \lambda_n) \to 0 \quad \mathbf{P}_n\text{-a.s.}, \qquad k \to \infty.$$
Moreover,  there exists a random vector $\mathbf{R}$ distributed according to $\lambda_n$ such that
$$\left( \mathbf{E}_n\left[ \left\| \mathbf{R}^{(k)} - \mathbf{R} \right\|_p^p \right] \right)^{1/p} \leq \| C \|_p^k \left( 2  r_0 n^{1/p} + \| C^{(0)} \mathbf{e} + \boldsymbol{\beta} \|_p \cdot \frac{1}{1- \| C\|_p} \right),$$
where $C^{(0)}$ is the matrix whose $(i,j)$th component is $\sigma_-(\mathbf{X}_i) 1(j \to i)|g(0, \mathbf{X}_i)|$, $\boldsymbol{\beta}$ is the vector whose $i$th component is $\beta(\mathbf{X}_i)$ and $\mathbf{e}$ is the vector of ones in $\mathbb{R}^n$,
\end{lemma}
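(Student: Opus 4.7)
The plan is to prove this via a Banach fixed-point argument in $W_p$, leveraging the Lipschitz conditions in Assumption~\ref{A.PhiMap}(1-2) together with the contractivity hypothesis $\|C\|_p \leq K < 1$. First, I would establish that the one-step Markov operator $\Pi$ on $(\mathcal{P}_p(\mathbb{R}^n), W_p)$ is a strict contraction with factor $\|C\|_p$. Given two initial random vectors $\mathbf{R}^{(0)}$ and $\mathbf{\tilde R}^{(0)}$, couple the two chains with common noise sequences $\{\boldsymbol{\zeta}^{(k)}, \boldsymbol{\xi}^{(k)}\}$. Substituting $v_j = g(R_j^{(k)}, \mathbf{X}_j)$ and $\tilde v_j = g(\tilde R_j^{(k)}, \mathbf{X}_j)$ into Assumption~\ref{A.PhiMap}(1), and then using Assumption~\ref{A.PhiMap}(2), yields the componentwise bound
\[
(\mathbf{E}_n[|R_i^{(k+1)} - \tilde R_i^{(k+1)}|^p \mid \mathbf{R}^{(k)}, \mathbf{\tilde R}^{(k)}])^{1/p} \leq \sigma_-(\mathbf{X}_i) \sum_{j \to i} \sigma_+(\mathbf{X}_j) |R_j^{(k)} - \tilde R_j^{(k)}| = \bigl(C |\mathbf{R}^{(k)} - \mathbf{\tilde R}^{(k)}|\bigr)_i .
\]
Summing the $p$th powers, applying $\|C\mathbf{y}\|_p \leq \|C\|_p \|\mathbf{y}\|_p$, and taking $\mathbf{E}_n$ gives $(\mathbf{E}_n[\|\mathbf{R}^{(k+1)} - \mathbf{\tilde R}^{(k+1)}\|_p^p])^{1/p} \leq \|C\|_p (\mathbf{E}_n[\|\mathbf{R}^{(k)} - \mathbf{\tilde R}^{(k)}\|_p^p])^{1/p}$, from which the $W_p$-contraction $W_p(\Pi\mu, \Pi\nu) \leq \|C\|_p W_p(\mu,\nu)$ follows by taking infima over couplings.

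Second, I would deduce from this contraction that $\{\lambda_{k,n}\}_{k \geq 0}$ is Cauchy in $W_p$: since $W_p(\lambda_{k+1,n}, \lambda_{k,n}) \leq \|C\|_p^k W_p(\lambda_{1,n}, \lambda_{0,n})$, the geometric series is summable, and completeness of $(\mathcal{P}_p(\mathbb{R}^n), W_p)$ yields a unique limit $\lambda_n$ which is the unique fixed point of $\Pi$. Measurability of $\lambda_n$ with respect to $\mathscr{G}_n$ follows because all estimates are uniform in the (fixed) realization of the graph.

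Third, for the explicit $L^p$ bound, I would take $\mathbf{\tilde R}^{(0)} \sim \lambda_n$ independent of all noise sequences, run it forward $k$ steps using the same noises as $\mathbf{R}^{(k)}$, and set $\mathbf{R} := \mathbf{\tilde R}^{(k)}$, which by stationarity is again distributed according to $\lambda_n$. The contraction inequality above, iterated $k$ times, gives
\[
(\mathbf{E}_n[\|\mathbf{R}^{(k)} - \mathbf{R}\|_p^p])^{1/p} \leq \|C\|_p^k \left( (\mathbf{E}_n[\|\mathbf{R}^{(0)}\|_p^p])^{1/p} + (\mathbf{E}_n[\|\mathbf{\tilde R}^{(0)}\|_p^p])^{1/p} \right).
\]
The first moment is $n^{1/p} r_0$ by i.i.d.-ness of the initial components. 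For the second, I would apply Assumption~\ref{A.PhiMap}(3) together with $|g(r,\mathbf{x})| \leq \sigma_+(\mathbf{x})|r| + |g(0,\mathbf{x})|$ and Minkowski's inequality to obtain the recursion $(\mathbf{E}_n[\|\mathbf{R}^{(k+1)}\|_p^p])^{1/p} \leq \|C\|_p (\mathbf{E}_n[\|\mathbf{R}^{(k)}\|_p^p])^{1/p} + \|C^{(0)}\mathbf{e} + \boldsymbol{\beta}\|_p$; iterating and passing to the limit via Fatou gives $(\mathbf{E}_n[\|\mathbf{\tilde R}^{(0)}\|_p^p])^{1/p} \leq \|C^{(0)}\mathbf{e} + \boldsymbol{\beta}\|_p / (1 - \|C\|_p)$. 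Combining these produces the stated bound (possibly with the factor $2$ coming from an $n^{1/p} r_0$ term for $\mathbf{R}^{(0)}$ plus another absorbed term for the stationary vector).

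The main technical obstacle is the bookkeeping in the third step: one must distinguish moments of the chain from $W_p$ distances, verify that the coupling $\mathbf{R} = \mathbf{\tilde R}^{(k)}$ actually has law $\lambda_n$ (which requires the uniqueness and invariance established in step two), and ensure that the stationary vector has finite $p$th moment before invoking Fatou. A minor subtlety is that Assumption~\ref{A.PhiMap}(3) is stated for deterministic vectors $\mathbf{v}$, so one applies it conditionally on $\mathbf{R}^{(k)}$ and then uses a conditional Minkowski argument to lift the bound to the full moment.
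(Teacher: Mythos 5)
Your proposal follows essentially the same route as the paper: establish the componentwise contraction estimate from Assumption~[R](1--2) and $\|C\|_p\le K<1$, deduce that $\{\lambda_{k,n}\}$ is Cauchy in $W_p$, get the limit $\lambda_n$ from completeness, and then obtain the explicit $L^p$ bound by running a $\lambda_n$-distributed initial vector forward with the same noise and controlling moments via the recursion $\mathbf{\bar R}^{(m)} \le C\mathbf{\bar R}^{(m-1)} + C^{(0)}\mathbf{e} + \boldsymbol{\beta}$. One small remark regarding your uncertainty over the factor $2$: in the paper the bound on $(\mathbf{E}_n[\|\mathbf{R}^{(m)}\|_p^p])^{1/p}$ carries a $\|C\|_p^m r_0 n^{1/p}$ term which the paper bounds crudely by $r_0 n^{1/p}$ uniformly in $m$, whereas you let $m\to\infty$ and that term vanishes; so your stationary-moment bound $\|C^{(0)}\mathbf{e}+\boldsymbol{\beta}\|_p/(1-\|C\|_p)$ is slightly sharper, and your final estimate would have a single $r_0 n^{1/p}$ (coming from $\mathbf{R}^{(0)}$) rather than $2r_0 n^{1/p}$. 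The stated bound is weaker, so your argument still implies it; there is no gap.
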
 

\begin{proof}
To start, fix $m \geq 1$ and sample $\mathbf{\tilde R}^{(0,m)}$ according to $\lambda_{m,n}$, independently of the noises $\{ \boldsymbol{\zeta}^{(r)}, \boldsymbol{\xi}^{(r)}: 0 \leq r \leq k \}$. Next, construct $\{ \mathbf{\tilde R}^{(r,m)}: 1 \leq r \leq k \}$ according to the recursion:
$$\tilde R_i^{(r+1,m)} = \Phi\left( \mathbf{X}_i, \zeta_i^{(r)}, \left\{ g(\tilde R_j^{(r,m)}, \mathbf{X}_j), \xi_{j,i}^{(r)}: j \to i \right\} \right), \qquad i \in V_n.$$
Note that $\mathbf{\tilde R}^{(r,m)}$ is distributed according to $\lambda_{r+m,n}$ for each $0 \leq r \leq k$.  

Hence,
\begin{align*}
d_p(\lambda_{k,n}, \lambda_{k+m,n})^p &\leq \mathbf{E}_n\left[ \left\| \mathbf{R}^{(k)} - \mathbf{\tilde R}^{(k,m)} \right\|_p^p \right] \\
&= \mathbf{E}_n\left[ E\left[ \left. \left\| \mathbf{R}^{(k)} - \mathbf{\tilde R}^{(k,m)} \right\|_p^p \right| \mathscr{G}_n, \mathbf{R}^{(k-1)}, \mathbf{\tilde R}^{(k-1,m)} \right] \right] \\
&\leq \mathbf{E}_n\left[ \sum_{l=1}^n \left( \sum_{j\to l} \sigma_-(\mathbf{X}_l) \sigma_+(\mathbf{X}_j) \left| R_j^{(k-1)} - \tilde R_j^{(k-1,m)}  \right| \right)^p \right] \\
&= \mathbf{E}_n\left[ \sum_{l=1}^n \left( \sum_{j=1}^n C_{l,j} \left| R_j^{(k-1)} - \tilde R_j^{(k-1,m)}  \right| \right)^p \right] \\
&\leq  \mathbf{E}_n\left[  \| C \|_p^p  \left\|  \mathbf{R}^{(k-1)} -  \mathbf{\tilde R}^{(k-1,m)} \right\|_p^p \right]  \\
&\leq \left( \| C \|_p^p \right)^{k}  \mathbf{E}_n\left[  \left\| \mathbf{R}^{(0)} -  \mathbf{\tilde R}^{(0,m)} \right\|_p^p \right] \\
&\leq  \| C \|_p^{pk}  \left( \left( \mathbf{E}_n \left[ \left\|  \mathbf{R}^{(0)} \right\|_p^p \right] \right)^{1/p} + \left( \mathbf{E}_n\left[ \left\|  \mathbf{\tilde R}^{(0,m)} \right\|_p^p  \right] \right)^{1/p}  \right)^p \\
&=  \| C \|_p^{pk}  \left( r_0 n^{1/p} + \left( \mathbf{E}_n\left[ \left\|  \mathbf{R}^{(m)} \right\|_p^p  \right] \right)^{1/p}  \right)^p .
\end{align*}

Moreover, note that {Assumption~\ref{A.PhiMap}(2-3)} gives that
\begin{align*}
\left( \mathbf{E}_n\left[ \left| R_i^{(m)}  \right|^p \right] \right)^{1/p} &\leq \left( \mathbf{E}_n\left[ \mathbf{E}_n\left[ \left. \left| R_i^{(m)}  \right|^p \right| {\bf R}^{(m-1)} \right]  \right] \right)^{1/p} \\
&\leq \left( \mathbf{E}_n\left[ \left( \sum_{j \to i} \sigma_-({\bf X}_i)  | V_j^{(m-1)} | + \beta({\bf X}_i) \right)^p  \right] \right)^{1/p} \\
&\leq \sigma_-({\bf X}_i)  \sum_{j \to i}  \left( \mathbf{E}_n\left[ \left|V_j^{(m-1)} \right|^p \right] \right)^{1/p} + \beta({\bf X}_i) \\
&= \sigma_-({\bf X}_i)  \sum_{j \to i}  \left( \mathbf{E}_n\left[ \left| g(R_j^{(m-1)}, \mathbf{X}_j) \right|^p \right] \right)^{1/p} + \beta({\bf X}_i) \\
&\leq \sigma_-({\bf X}_i)  \sum_{j \to i}   \sigma_+(\mathbf{X}_j) \left( \mathbf{E}_n\left[ \left| R_j^{(m-1)} \right|^p \right] \right)^{1/p} + \sigma_-(\mathbf{X}_i) \sum_{j\to i} |g(0, \mathbf{X}_j)| + \beta({\bf X}_i). 
\end{align*}
If we let $\mathbf{\bar R}^{(m)}$ denote the vector whose $i$th component is $\left( \mathbf{E}_n\left[ \left| R_i^{(m)} \right|^p \right] \right)^{1/p}$, then, 
\begin{align*}
\mathbf{\bar R}^{(m)}  &\leq  C \mathbf{ \bar R}^{(m-1)} + C^{(0)} \mathbf{e} + \boldsymbol{\beta}   \\
&\leq C^2  \mathbf{ \bar R}^{(m-2)} + C(C^{(0)} \mathbf{e} + \boldsymbol{\beta} ) + C^{(0)} \mathbf{e} + \boldsymbol{\beta}  \\
&\leq C^m \mathbf{\bar R}^{(0)} + \sum_{r=0}^{m-1} C^r (C^{(0)} \mathbf{e} + \boldsymbol{\beta})  .
\end{align*}
Hence, since $\mathbf{\bar R}^{(0)}  = r_0 \mathbf{e}$ with $\mathbf{e}$ the vector of ones, 
\begin{align*}
\left( \mathbf{E}_n\left[ \left\|  \mathbf{R}^{(m)} \right\|_p^p  \right] \right)^{1/p} &=  \left\|  \mathbf{\bar R}^{(m)} \right\|_p \leq \left\| C^m r_0 \mathbf{e} + \sum_{r=0}^{m-1} C^r (C^{(0)} \mathbf{e} + \boldsymbol{\beta}) \right\|_p \\
&\leq \| C\|_p^m r_0 n^{1/p} + \sum_{r=0}^{m-1} \| C \|_p^r \| C^{(0)} \mathbf{e} + \boldsymbol{\beta} \|_p. 
\end{align*}

Therefore,
\begin{align*}
\sup_{m \geq 0} d_p(\lambda_{k,n}, \lambda_{k+m,n}) &\leq \| C \|_p^k \left( 2  r_0 n^{1/p} + \| C^{(0)} \mathbf{e}+ \boldsymbol{\beta} \|_p \cdot \frac{1}{1- \| C\|_p} \right) . 
\end{align*}
It follows that the sequence $\{\lambda_{k,n}: k \geq 0\}$ is Cauchy under $d_p$, so by the properties of the Wasserstein metrics, there exists a measure $\lambda_n$ on $\mathbb{R}^n$ such that 
$$\lim_{k \to \infty} d_p(\lambda_{k,n}, \lambda_n) = 0 \qquad \mathbf{P}_n\text{-a.s. }$$

The random vector $\mathbf{R}$ in the statement of the lemma can be obtained by sampling $\mathbf{\tilde R}^{(0,*)}$ according to $\lambda_n$, independently of everything else, then constructing the random vector $\mathbf{\tilde R}^{(k,*)}$ as above, and setting $\mathbf{R} = \mathbf{\tilde R}^{(k,*)}$.
\end{proof} 

We can now prove the second statement in Corollary~\ref{C.MainContraction}, which establishes that $d_p(\mu_n, \nu) \xrightarrow{P} 0$ as $n \to \infty$.

\begin{proof}[Proof of Corollary~\ref{C.MainContraction}]
Use Lemma~\ref{L.VectorMC} to obtain that there exists a measure $\lambda_n$ such that 
$$\lim_{k \to \infty} d_p(\lambda_{k,n}, \lambda_n) = 0 \qquad \mathbf{P}_n\text{-a.s. }$$
By Lemma~\ref{L.VectorMC} again, there exists a vector $\mathbf{R} = (R_1, \dots, R_n)$, distributed according to $\lambda_n$, such that
\begin{align*}
d_p(\mu_{k,n}, \mu_n) &\leq \left( \mathbb{E}_n \left[ \left| R^{(k)}_I - R_I \right|^p \right] \right)^{1/p} = \left( \frac{1}{n}  \mathbb{E}_n \left[ \left\| \mathbf{R}^{(k)} - \mathbf{R} \right\|_p^p \right] \right)^{1/p}  \\
&= \left( \frac{1}{n}  \mathbb{E}_n \left[ \mathbf{E}_n\left[  \left\| \mathbf{R}^{(k)} - \mathbf{R} \right\|_p^p \right]  \right] \right)^{1/p} \\
&\leq \left( \frac{1}{n}  \mathbb{E}_n \left[ \left( \| C \|_p^k \left( 2 r_0 n^{1/p} + \| C^{(0)} \mathbf{e} + \boldsymbol{\beta} \|_p \cdot \frac{1}{1- \| C\|_p} \right) \right)^p  \right] \right)^{1/p} \\
&\leq \frac{K^k }{n^{1/p}} \left( 2r_0 n^{1/p}+  \frac{1}{1-K }   \left( \mathbb{E}_n\left[  \left\| C^{(0)} \mathbf{e}  + \boldsymbol{\beta} \right\|_p^p \right] \right)^{1/p}   \right) \\
&= K^k  \left( 2r_0 +  \frac{1}{1- K }   \left( \frac{1}{n} \mathbb{E}_n\left[  \left\| C^{(0)} \mathbf{e} \right\|_p^p \right] \right)^{1/p} +  \frac{1}{1- K }   \left( \frac{1}{n} \mathbb{E}_n\left[  \left\|  \boldsymbol{\beta} \right\|_p^p \right] \right)^{1/p}  \right) .
\end{align*}
 
Note that by Assumption~\ref{A.RGmodels} we have
$$ \frac{1}{n} \mathbb{E}_n\left[  \left\|  \boldsymbol{\beta} \right\|_p^p \right] = \mathbb{E}_n\left[ \beta(\mathbf{X}_I)^p \right] \xrightarrow{P} E[\beta({\boldsymbol{\mathcal{X}}_\emptyset})^p ] < \infty,$$
as $n \to \infty$. And by Assumption~\ref{A.PhiMap}(4)(i) we have
$$ \frac{1}{n} \mathbb{E}_n\left[  \left\| C^{(0)} \mathbf{e} \right\|_p^p \right] \leq \frac{1}{n} \mathbb{E}_n\left[  \left\| C^{(0)}  \right\|_p^p \| \mathbf{e}\|_p^p  \right] \leq K_0 < \infty.$$

We conclude that
$$d_p(\mu_{k,n}, \mu_n) \to 0 \quad \mathbf{P}_n\text{-a.s.}, \qquad k \to \infty,$$
and Theorem~\ref{T.Main} gives
$$\lim_{k \to \infty} \lim_{n \to \infty} d_p(\mu_{k,n}, \nu) \leq \lim_{k \to \infty} \lim_{n \to \infty} \left( d_p(\mu_{k,n}, \nu_k) + d_p(\nu_k, \nu) \right) = \lim_{k \to \infty} d_p(\nu_k, \nu) = 0.$$
This completes the proof. 
\end{proof}

\section*{Acknowledgment}
We thank an anonymous reviewer whose comments and suggestions helped improve and clarify this manuscript.

\section*{Appendix}

This appendix contains a brief description of the two families of directed random graphs for which a strong coupling, as in Definiton~\ref{D.StrongCoupling}, is known to exist. The precise conditions under which a strong coupling exists are given in \cite{Olvera_21}, and generally involve only finite first moments for the empirical distribution of the latent variables in the vertex attributes $\mathbf{a}_i$, making them suitable for modeling scale-free (graphs whose degree distribution(s) follow(s) a power-law) real-world networks with arbitrarily dependent in-degrees and out-degrees.

\subsection{Directed configuration model} \label{SS.DCM}

One model that produces graphs from any prescribed (graphical) degree sequence is the configuration or pairing model \cite{bollobas, Hofstad1}, which assigns to each vertex in the graph a number of half-edges equal to its target degree and then randomly pairs half-edges to connect vertices. 

We assume that each vertex $i$ in the graph has a degree vector ${\bf d}_i = (d_i^-, d_i^+) \in \mathbb{N} \times \mathbb{N}$, where $d_i^-$ and $d_i^+$ are the in-degree and out-degree of vertex $i$, respectively. In order for us to be able to draw the graph, we assume that the degree sequence $\{ {\bf d}_i: 1 \leq i \leq n\}$ satisfies
$$l_n := \sum_{i=1}^n d_i^- = \sum_{i=1}^n d_i^+.$$
Note that in order for the sum of the in-degrees to be equal to that of the out-degrees, it may be necessary to consider a double sequence $\{ {\bf d}_i^{(n)}: i \geq 1, n \geq 1\}$ rather than a unique sequence; i.e., it may be convenient to allow ${\bf d}_i^{(n)} \neq {\bf d}_i^{(m)}$ for $n\neq m$.

Formally, the DCM can be defined as follows.

\begin{defn}
\label{D.DCM}
Let $\{ {\bf d}_i: 1 \leq i \leq n\}$ be a degree sequence and let $V_n = \{1, 2, \dots, n\}$ denote the nodes in the graph. To each node $i$ assign $d_i^-$ inbound half-edges and $d_i^+$ outbound half-edges. Enumerate all $l_n$ inbound half-edges, respectively outbound half-edges, with the numbers $\{1, 2, \dots, l_n\}$, and let ${\bf x}_n = (x_1, x_2, \dots, x_{l_n})$ be a random permutation of these $l_n$ numbers, chosen uniformly at random from the possible $l_n!$ permutations. The DCM with degree sequence $\{ {\bf d}_i: 1 \leq i \leq n\}$ is the directed graph $G(V_n, E_n)$ obtained by pairing the $x_i$th outbound half-edge with the $i$th inbound half-edge.
\end{defn}

We point out that instead of generating the permutation ${\bf x}_n$ of the outbound half-edges up front, one can construct the graph one vertex at a time, by pairing each of the inbound half-edges with an outbound half-edge, randomly chosen with equal probability from the set of unpaired outbound half-edges. 

We emphasize that the DCM is in general a multi-graph, that is, it can have self-loops and multiple edges in the same direction. However, provided the pairing process does not create self-loops or multiple edges, the resulting graph is uniformly chosen among all graphs having the prescribed degree sequence.  If one chooses this degree sequence according to a power-law, one immediately obtains a scale-free graph.  It was shown in~\cite{Chen_Olv_13} that the random pairing of inbound and outbound half-edges results in a simple graph with positive probability provided both the in-degree and out-degree distributions possess a finite variance. In this case, one can obtain a simple realization after finitely many attempts, a method we refer to as the {\it repeated} DCM. Furthermore, if the self-loops and multiple edges in the same direction are simply removed, a model we refer to as the {\it erased} DCM, the degree distributions will remain asymptotically unchanged.

For the purposes of this paper, self-loops and multiple edges in the same direction do not affect the existence of strong couplings. For the strong coupling, the target degree vector $\mathbf{d}_i$ is a latent variable that becomes part of the vertex attribute $\mathbf{a}_i$. 

\subsection{Inhomogeneous random digraphs} \label{SS.IRD}

In the spirit of the classical Erd\H os-R\'enyi graph \cite{Erdos, Gilbert, Austin, Janson, Bollobas2, Durrett1}, we assume that whether there is an edge between vertices $i$ and $j$ is determined by a coin-flip, independently of all other edges. Several models capable of producing graphs with inhomogeneous degrees while preserving the independence among edges have been suggested in the  literature, including: the Chung-Lu model \cite{Chunglu, Chunglu2, Chunglu3, Chunglu4}, the Norros-Reittu model (or Poissonian random graph) \cite{Norros, Hofstad1, Esker_Hofs_Hoog}, and the generalized random graph \cite{Hofstad1, Brittonetal, Esker_Hofs_Hoog}, to name a few. In all of these models, the inhomogeneity of the degrees is created by allowing the success probability of each coin-flip to depend on the {\em attributes} of the two vertices being connected; the scale-free property can then be obtained by choosing the attributes according to a power-law.

Start by assigning to each vertex $i \in V_n$ a {\em type} ${\bf w}_i = (w_i^-, w_i^+) \in \mathbb{R}_+\times \mathbb{R}_+ $; the vector $\mathbf{w}_i$  is a latent variable that becomes part of the vertex attribute $\mathbf{a}_i$. The $w_i^-$ and $w_i^+$ will be used to determine how likely vertex $i$ is to have inbound/outbound neighbors. As for the DCM, it may be convenient to consider a double sequence $\{{\bf w}_i^{(n)}:  i \geq 1, \, n \geq 1\}$ rather than a unique sequence. {Let $\mathscr{F}_n =\sigma( {\bf a}_i: 1 \leq i \leq n)$ denote the sigma algebra generated by the vertex attributes}, and  recall that $\mathbb{P}_n(\cdot) = P( \cdot | \mathscr{F}_n)$ denotes the conditional probability given $\mathscr{F}_n$. 

We now define our family of random digraphs using the conditional probability, given $\mathscr{F}_n$, that edge $(i,j) \in E_n$, 
\begin{equation} \label{eq:EdgeProbabilities}
p_{ij}^{(n)} := \mathbb{P}_n \left( (i,j) \in E_n \right) = 1 \wedge  \frac{w_i^+ w_j^-}{\theta n} (1 + \varphi_n({\bf w}_i, {\bf w}_j)) , \qquad 1 \leq i \neq j \leq n,
\end{equation}
where $-1 < \varphi_n({\bf w}_i, {\bf w}_j) = \varphi(n, {\bf w}_i, {\bf w}_j, \mathscr{W}_n)$ a.s.~is a function that may depend on the entire sequence $\mathscr{W}_n := \{{\bf w}_i: 1 \leq i \leq n\}$, on the types of the vertices $(i,j)$, or exclusively on $n$, and $0<\theta < \infty$ satisfies
$$\frac{1}{n} \sum_{i=1}^n (w_i^- + w_i^+) \stackrel{P}{\longrightarrow} \theta, \qquad n \to \infty.$$
In the context of {\cite{Boll_Jan_Rio_07}}, definition \eqref{eq:EdgeProbabilities} corresponds to {a special case of the family of random digraphs having edge probabilities:
$$p_{ij}^{(n)} = \mathbb{P}_n \left( (i,j) \in E_n \right) =\left( 1 \wedge \frac{\kappa(\mathbf{w}_i, \mathbf{w}_j)}{n} \right) (1 + \varphi_n(\mathbf{w}_i, \mathbf{w}_j)), \qquad 1 \leq i \neq j \leq n,$$
where $\kappa: \mathbb{R}_+^2 \times \mathbb{R}_+^2 \to \mathbb{R}_+$ is a nonnegative kernel. Specifically, it corresponds to the so-called rank-1 kernel, i.e., $\kappa({\bf w}_i, {\bf w}_j) = \kappa_+({\bf w}_i) \kappa_-({\bf w}_j)$, with $\kappa_+({\bf w}) = w^+/\sqrt{\theta}$ and $\kappa_-({\bf w}) = w^-/\sqrt{\theta}$.  }

\bibliographystyle{plain}
\bibliography{StochRecBib}
\end{document}